\newtheorem{thm}{Theorem}[section]
\newtheorem{prop}[thm]{Proposition}
\newtheorem{lem}[thm]{Lemma}
\newtheorem{cor}[thm]{Corollary}
\newtheorem{example}[thm]{Example}
\newtheorem{conj}[thm]{Conjecture}
\newenvironment{proof}
{\par\addvspace{0.3cm}\noindent{\rm Proof. }}
{\nopagebreak\mbox{}\hfill $\Box$\par\addvspace{0.25cm}}
\newcommand{\T}{\mathbb{T}}
\newcommand{\Z}{\mathbb{Z}}
\newcommand{\R}{\mathbb{R}}
\newcommand{\C}{\mathbb{C}}
\newcommand{\Q}{\mathbb{Q}}
\newcommand{\Zp}{\Z^+}
\newcommand{\cR}{\mathcal{R}}
\newcommand{\cC}{\mathcal{C}}
\newcommand{\cA}{\mathcal{A}}
\newcommand{\cU}{\mathcal{U}}
\newcommand{\cL}{\mathcal{L}}
\newcommand{\tp}{{\tau'}}
\newcommand{\HomXT}{\mathrm{Hom}(\Xi, \mathbb{T})}
\newcommand{\iy}{\infty}
\DeclareMathOperator{\gr}{gr}
\DeclareMathOperator{\trace}{trace}
\numberwithin{equation}{section}
\begin{document}

\title{Asymptotics of determinants for finite sections of operators with almost periodic diagonals}
\author{Torsten Ehrhardt\thanks{tehrhard@ucsc.edu}\\
   Department of Mathematics\\
        University of California\\
       Santa Cruz, CA 95064, USA
  \and   Zheng Zhou\thanks{zzho18@ucsc.edu}\\
 Department of Mathematics\\
        University of California\\
       Santa Cruz, CA 95064, USA}

\date{}

\maketitle

% ----------------------------------------------------------------
\vspace*{-3ex}
\begin{abstract}
Let $A=(a_{j,k})_{j,k=-\infty}^\infty$ be a bounded linear operator on $l^2(\Z)$ whose diagonals 
$D_n(A)=(a_{j,j-n})_{j=-\infty}^\infty \in l^\infty(\Z)$ are almost periodic sequences. For certain classes of such operators and under certain conditions, we are going to determine the asymptotics of the determinants $\det A_{n_1,n_2}$ of the finite sections
$A_{n_1,n_2}=(a_{j,k})_{j,k=n_1}^{n_2-1}$ as their size $n_2-n_1$ tends to infinity.
Examples of such operators include block Toeplitz operators and the almost Mathieu operator.
\end{abstract}

% ----------------------------------------------------------------

\noindent
{\bf Keywords}: Szeg\"o-Widom limit theorem, Toeplitz operator, almost Mathieu operator, 
\linebreak
determinants of finite sections

\medskip
\noindent
{\bf MSC2010:}
Primary 47B35, Secondary: 47B36, 47B37, 47L80.

%%%%%%%%%%%%%%%%%%%%%%%%%%%%%%%%%%%%%%%%%%%%%%%%%%%%%%%%%%%%%%%%%%%%%%
\section{\Large \bf Introduction}
\label{sec:1}

{\bf Asymptotics of determinants of finite sections.}\ 
For a symbol $a\in L^\infty(\T)$, $\T=\{t\in\C\,:\, |t|=1\}$ being the unit circle, the $n\times n$ Toeplitz matrices are defined by
\begin{equation}
T_n(a) = (a_{j-k}),\qquad 0\le  j,k\le n-1,
\end{equation}
where $a_k$ stands for the $k$-th Fourier coefficients of $a$,
\[
a_k = \frac{1}{2\pi} \int_0^{2\pi}a(e^{ix})e^{-ikx}dx,\quad k\in \Z.
\]
Under certain assumption on the symbol $a$, the strong Szeg\"o-Widom limit theorem \cite{szego1952,Widom} states that 
\[
\lim_{n\to\infty}\frac{\det T_n(a)}{G[a]^n} = E[a],
\]
where $G[a]$ and $E[a]$ are  explicitly defined non-zero constants.

The Toeplitz matrices $T_n(a)$ can be viewed as the finite sections of Laurent operators $L(a)$, 
i.e., as the compressions
$$
T_n(a)= P_n L(a) P_n|_{\mathrm{im} P_n},
$$
where the Laurent operators
\begin{equation}\label{L(a)}
L(a) = (a_{j-k}), \qquad j,k\in \Z,
\end{equation}
are acting on $l^2(\Z)$, and $P_n$ are the finite section projections
$$
P_n: \{x_k\}_{k\in \Z} \mapsto \{y_k\}_{k\in \Z}, \quad
        y_k = 
        \begin{cases}
        x_k &\mbox{ if } \enskip 0\le k<n\\ 
        0     &\mbox{ if } \enskip k<0  \mbox{ or }  k\ge n.
        \end{cases}
$$
Note that Laurent operators are constant on each diagonal, that means they are shift-invariant in the sense that 
$U_{-n}L(a)U_n = L(a)$ for all $n\in \Z$, where $U_n=L(t^n)$ are the shift operators
    \begin{equation}\label{Un on l2}
        U_n:\{x_k\}_{k\in \Z} \mapsto \{x_{k-n}\}_{k\in \Z}
    \end{equation}
acting on $l^2(\Z)$.

For $n_1, n_2\in \Z$ such that $n_2>n_1$ let us define more general finite section  projections 
\begin{equation}\label{Pn1n2}
P_{n_1,n_2}:
 \{x_k\}_{k\in \Z} \mapsto \{y_k\}_{k\in \Z}, \enskip
 y_k = \begin{cases}
                x_k &\mathrm{ if } \enskip n_1\le k<n_2\\
                0   &\mathrm{ if } \enskip k<n_1 \enskip \mathrm{ or }\enskip  k\ge n_2
              \end{cases}
\end{equation}
thought of as operators acting on $l^2(\Z)$. 
The goal of the paper is to describe the asymptotic behavior of the determinants
$$
\det P_{n_1,n_2} A P_{n_1,n_2}
$$
as the size $n_2-n_1$ of the finite sections tends to $ \infty$, where $A$ is a bounded linear operator on $l^2(\Z)$ with almost periodic diagonals which has to satisfy various conditions. 
As usual, the finite sections $P_{n_1,n_2} A P_{n_1,n_2}$ therein are identified  with  matrices of order $n_2-n_1$.
Notice, however, that even in the case of band operators  with periodic diagonals (see the comments on Theorem \ref{t1.1} below)
one  {\em cannot} expect an asymptotic behavior of the kind 
\begin{equation}\label{no-limit}
\lim_{n_2-n_1\to \infty}
\frac{ \det P_{n_1,n_2} A P_{n_1,n_2}}{G^{n_2-n_1}} = E.
\end{equation}
In fact, in order for a limit to exist one has to consider appropriate sequences of integers  $h_1=\{h_1(n)\}_{n\ge 0} $ and $h_2=\{h_2(n)\}_{n\ge 0}$ (which will be called  {\em fractal sequences} in this paper) such that 
 $h(n):=h_2(n) - h_1(n)>0$ and $h(n)\to \infty$ as $n\to \infty$. Then, under the appropriate assumptions on  $A$, 
 one of our main results (referred to as the {\em fractal version of the limit theorem})  states that 
\begin{equation}\label{Sz-limit}
\lim_{n\to \infty}
\frac{ \det P_{h_1(n),h_2(n)} A P_{h_1(n),h_2(n)}}{G^{h(n)}} = E_{h_1,h_2},
\end{equation}
where $G$ is a nonzero constant only depending on $A$ and $E_{h_1,h_2}$ is a constant depending 
on $A$ and the sequences $h_1$ and $h_2$.
This  generalizes the results of  \cite{ERS}, where the case of $h_1(n)=0$ and 
very particular fractal sequences $h(n)=h_2(n)$ (therein referred to as {\em distinguished sequences}) was considered.

The  notion of fractal sequences is general enough to allow a ``complete'' understanding 
of the determinant asymptotics in the following sense. Namely, as another main result 
(referred to as the {\em uniform version of the limit theorem})
we will establish that 
\begin{equation}\label{Sz-limit-2}
\lim_{n_2-n_1\to \infty}
\left(\frac{ \det P_{n_1,n_2} A P_{n_1,n_2}}{G^{n_2-n_1}} - E[n_1,n_2]\right)=0
\end{equation}
under appropriate conditions on $A$. To make this a non-trivial statement, it is also established that 
 $E[n_1,n_2]=\Theta_{A,1}(\tau_{n_1})\Theta_{A,2}(\tau_{n_2})$
with explicitly defined elements $\tau_n$ of a compact Hausdorff space and 
{\em continuous} complex-valued functions $\Theta_{A,1}$ and $\Theta_{A,2}$.
The limit \eqref{Sz-limit-2}, being of the form $\lim\limits_{n_2-n_1\to \infty} \sigma_{n_1,n_2}=0$, is understood in the usual way, i.e., meaning that for each $\epsilon>0$ there exists $k\in\mathbb{N}$ such that $|\sigma_{n_1,n_2}|<\epsilon$
whenever $n_1,n_2\in\Z$ and $n_2-n_1>k$.

 \bigskip
Let us now introduce the notions of almost periodic sequences and band dominated operators with 
almost periodic diagonals, the latter being the class of operators $A$ for which the limits  
\eqref{Sz-limit} and \eqref{Sz-limit-2} will be established.

\medskip
{\bf Almost periodic sequences.}\
Denote by $AP(\Z)$ the set of {\em almost periodic sequences} consisting of all $a=\{a(k)\}_{k\in\Z} \in l^\infty(\Z)$ for which the set
    \[\{U_na:n\in \Z\}\]
is relatively compact in the norm topology of $l^\infty(\Z)$. Here
    \begin{equation}\label{Un on l00}
        U_n:a\in l^\infty(\Z) \mapsto b\in l^\infty(\Z), \qquad b(k):= a(k-n)
    \end{equation}
is the shift operator acting (isometrically) on $l^\infty(\Z)$. Despite the difference of the underlying spaces in (\ref{Un on l2}) and (\ref{Un on l00}), we will use the same symbol for brevity. There is an equivalent definition of $AP(\Z)$ as the closure in $l^\infty(\Z)$ of the set of all finite linear combinations of sequences $e_\xi\in l^\infty(\Z)$, where
\begin{equation}
e_\xi(k) = e^{2\pi i k\xi},\quad k\in \Z,\enskip \xi\in \R.
\end{equation}
Note that $e_\xi$ depends on $\xi\in \R$ only modulo $\Z$. Therefore, it is more appropriate
to think of $\xi$ as an element in  $\R/\Z$,  the additive group arising from $\R$ by identifying two numbers whose difference is an integer.

For each $a=\{a(k)\}_{k\in\Z}\in AP(\Z)$, its mean $M(a)$ is well-defined by the limit
    \begin{equation}\label{mean.Ma}
        M(a) = \lim_{n\to \infty} \frac{1}{n}\sum_{k=0}^{n-1}a(k).
    \end{equation}
The Fourier coefficients of a sequence $a\in AP(\Z)$ are defined by
    \begin{equation}
        a_\xi = M(ae_{-\xi}), \enskip \xi \in \R/\Z.
    \end{equation}
It is known that the set of all $\xi$ for which $a_\xi\neq 0$ is at most countable for each $a\in AP(\Z)$. 
This set is called the Fourier spectrum of $a$.
The theory of almost periodic sequences is similar to the theory of almost periodic functions on $\R$.
For details and basic information we refer to \cite{Corduneanu}.

%%%%%%%
\bigskip
{\bf Band-dominated operators with almost periodic diagonals.}\
We define the class  $\mathcal{OAP}$ of  \emph{operators with almost periodic diagonals} as the set of all bounded linear operators $A$ on $l^2(\Z)$ whose $n$-th diagonal $a^{(n)}$ belongs to $AP(\Z)$ for each $n\in \Z$. In other words,
\begin{equation}\label{def.D(A)}
   a^{(n)} = D_n(A):= D(AU_{-n})\in AP(\Z),
\end{equation}
where $D(A)\in l^\infty(\Z)$ stands for the main diagonal of a bounded linear operator $A$ on $l^2(\Z)$.
While a Laurent operator can be written as
    \[L(a) = \sum_{n\in \Z}a^{(n)}U_n\]
where $a^{(n)} \in \mathbb{C}$ are constants, an operator $A\in \mathcal{OAP}$ can be formally written as
    \begin{equation}\label{form of A}
        A = \sum_{n\in \Z} (a^{(n)}I)U_n
    \end{equation}
where $a^{(n)} = D_n(A)\in AP(\Z)$ is the sequence representing the $n$-th diagonal of $A$ and $aI$ stands for the multiplication operator generated by $a=\{a(k)\}_{k\in\Z}\in l^\infty(\Z)$, i.e., 
    \begin{equation}
        aI: \{x_k\}_{k\in \Z}\in l^2(\Z)\mapsto    \{a(k)x_k\}_{k\in \Z}\in l^2(\Z).
    \end{equation}
    
A subclass of $\mathcal{OAP}$ is the collection of \emph{band-dominated operators with almost periodic diagonals}, the notion of which is used rather loosely.  One can define it as the closure of the set of all band operators with almost periodic diagonals with respect to some appropriate norm. Band operators are operators of the form (\ref{form of A}) with the sum being finite. For more on the theory of band-dominated operators, see \cite{RaRo-07, RaRo-12, Ro08, RoSi-07}.

%%%%%%%%%
\bigskip
{\bf The strong Szeg\"o-Widom limit theorem.}\
The class $AP(\Z)$ includes periodic sequences as a special case. Indeed, the sequence $e_\xi$ is periodic if and only if 
$\xi \in \Q/\Z$.
An operator acting on $l^2(\Z)$ whose diagonals are periodic sequences with period $N$ can be identified
with a block Laurent operator, which is defined by \eqref{L(a)} with $a$ being an $N\times N$ matrix-valued
function whose Fourier coefficients $a_n$ are $N\times N$ matrices.

The classical strong Szeg\"o-Widom limit theorem \cite{Widom} describes the asymptotics of the determinants
of the finite sections $P_{n_1,n_2}L(a) P_{n_1,n_2}$ of block Laurent operators in the case
when $n_1=0$ and $n_2=nN$ as $n\to \infty$. 
Let us recall this result for generating functions in $B^{N\times N}$,  where  $B=W\cap F\ell^{2,2}_{1/2,1/2}$
is the Banach algebra of all $a\in L^\iy(\T)$ for which 
$$
\|a\|_{B}:= \sum_{n=-\infty}^\infty |a_n|+\left(
\sum_{n=-\infty}^\infty |n| \cdot |a_n|^2\right)^{1/2}<\infty.
$$
Further information and  different versions of this limit theorem for various classes of functions 
can be found in \cite{BS99, BS-ana, Eh-SzW, RoSi-07, SimonB} and in the references given there.

\begin{thm}[Szeg\"o-Widom limit theorem]\label{t1.1}
For $a\in B^{N\times N}$ assume that $\det a(t)\neq0$ for all $t\in\T$ and that $\det a(t)$ has winding number zero.
Then 
\begin{equation}\label{SW-lim}
\lim_{n\to \infty} \frac{\det P_{0,nN} L(a) P_{0,nN}}{G[a]^n}=E[a],
\end{equation}
where $G[a]=\exp\left(\frac{1}{2\pi}\int\limits_0^{2\pi} \log\det a(e^{ix})\, dx\right)$ and $E[a]=\det T(a) T(a^{-1})$.
\end{thm}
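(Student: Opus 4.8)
The plan is to prove the block-matrix Szegő–Widom theorem by the classical Widom strategy, reducing the determinant of the finite section to a determinant involving Toeplitz operators acting on the full Hardy space. First I would set up the relevant operator algebra. Writing $T(a)=P_0 L(a) P_0|_{\operatorname{im} P_0}$ for the (semi-infinite) block Toeplitz operator and $H(a)$ for the associated block Hankel operator, the central algebraic identity is the factorization of the finite section $P_{0,nN}L(a)P_{0,nN}$ obtained by sandwiching $L(a)$ between projections and exploiting the Laurent structure $L(ab)=L(a)L(b)$. The hypotheses---$\det a(t)\neq 0$ on $\T$ with winding number zero---guarantee that $a$ admits a (bounded) Wiener–Hopf factorization $a=a_- a_+$ with $a_\pm^{\pm 1}$ analytic inside/outside the disk, and crucially that $T(a)$ is invertible with $T(a)^{-1}=T(a_+^{-1})T(a_-^{-1})$. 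This is where the winding-number-zero assumption is essential: it rules out a nonzero Fredholm index and forces genuine invertibility rather than mere Fredholmness.

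Next I would establish the key determinant identity. The standard approach expresses $\det P_{0,nN}L(a)P_{0,nN}$ in terms of $\det$ of an operator of the form $T(a_n)$ or a truncated product, and then extracts the dominant factor $G[a]^n$. Concretely, one reduces $\det P_{0,nN}L(a)P_{0,nN}/G[a]^n$ to $\det\bigl(T(a)T(a^{-1}) + R_n\bigr)$ where $R_n$ is a remainder built from Hankel operators compressed to the tail $\operatorname{im}(I-P_{0,nN})$. The scalar $G[a]$ emerges as the geometric mean $\exp\bigl(\frac{1}{2\pi}\int_0^{2\pi}\log\det a(e^{ix})\,dx\bigr)$, which is precisely the product of the multiplicative constants appearing in the Wiener–Hopf factors; this is a routine computation using $M(\log\det a)$. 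I would then invoke the trace-class structure: the membership $a\in B^{N\times N}=W\cap F\ell^{2,2}_{1/2,1/2}$ is exactly tailored so that the relevant Hankel operators $H(a)$, $H(\tilde a)$ lie in the Hilbert–Schmidt class, whence products such as $H(a)H(\tilde b)$ are trace class and operator determinants are well-defined.

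The heart of the argument, and the step I expect to be the main obstacle, is showing that the remainder $R_n$ tends to zero in trace norm as $n\to\infty$, so that $\det(T(a)T(a^{-1})+R_n)\to \det T(a)T(a^{-1})=E[a]$ by continuity of the operator determinant on $I+\mathcal{C}_1$. The difficulty is twofold. First, one must control the compressions of Hankel operators to the far tail $\operatorname{im}(I-P_{0,nN})$ and prove that these compressions converge to zero in Hilbert–Schmidt norm; this requires the quantitative $\ell^{2,2}_{1/2,1/2}$ smoothness of $a$, not merely continuity, precisely because the $1/2$-weights encode the decay rate that makes the tail contributions summable. Second, the passage through the Wiener–Hopf factorization must be handled so that both $a_\pm$ and their inverses retain enough regularity to keep all intermediate Hankel operators Hilbert–Schmidt; establishing that the factors inherit membership in the relevant smoothness class (or a class sufficient for the trace-class conclusion) is the technically delicate point.

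Finally I would assemble the pieces: combine the determinant identity, the extraction of $G[a]^n$, and the trace-norm convergence $R_n\to 0$ to conclude $\det P_{0,nN}L(a)P_{0,nN}/G[a]^n\to \det T(a)T(a^{-1})=E[a]$, yielding \eqref{SW-lim}. Since this is the classical block Szegő–Widom theorem quoted verbatim from \cite{Widom}, I would expect the authors either to cite it directly or to reproduce only the skeleton above, with the smoothness bookkeeping deferred to the cited literature; in the present paper its role is as a known input, the $N$-periodic prototype that the later fractal and uniform versions \eqref{Sz-limit} and \eqref{Sz-limit-2} generalize to almost periodic diagonals.
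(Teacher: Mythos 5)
The paper never proves Theorem \ref{t1.1}: it is imported verbatim from Widom's work \cite{Widom} (with further treatments in \cite{BS-ana, Eh-SzW}), and its only role here is as the known $N$-periodic prototype that the later fractal and uniform versions generalize --- a role you correctly anticipated in your last paragraph. So there is no internal proof to compare against; what can be judged is whether your sketch would actually prove the block theorem. As written, it would not, because of one concrete error rather than the technical bookkeeping you flag.

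You assert that the hypotheses ($\det a(t)\neq 0$ on the circle, winding number zero) give a factorization $a=a_-a_+$ and that winding number zero ``rules out a nonzero Fredholm index and forces genuine invertibility'' of $T(a)$. For $N\ge 2$ this is false. A matrix symbol factors as $a=a_-\,\mathrm{diag}(t^{\kappa_1},\dots,t^{\kappa_N})\,a_+$ with partial indices $\kappa_j$ whose sum equals the winding number of $\det a$; winding number zero forces $\sum_j\kappa_j=0$, hence Fredholm index zero, but not $\kappa_j=0$ for every $j$, hence not invertibility of $T(a)$ and not the existence of a canonical factorization. The paper itself points at exactly this degenerate situation: it remarks after Theorem \ref{t1.1} that for $N\ge2$ the constant $E[a]$ can vanish, and Subsection \ref{sec:7.3} exhibits $a(t)=\mathrm{diag}(t,t^{-1})$ --- where $\det a\equiv 1$ has winding number zero but the partial indices are $(1,-1)$ --- as a case where the limiting constants are zero. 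Your argument hinges on $T(a)^{-1}=T(a_+^{-1})T(a_-^{-1})$ and on continuity of the operator determinant at $T(a)T(a^{-1})+R_n$ with $R_n\to0$ in trace norm, so it only covers symbols whose partial indices all vanish. To prove the theorem as stated you need either a separate argument for the index-zero-but-non-invertible case showing $\det P_{0,nN}L(a)P_{0,nN}/G[a]^n\to 0=E[a]$ (this is how the classical treatments close the case distinction), or a method that avoids Wiener--Hopf factorization altogether, such as the Banach-algebra approach of \cite{Eh-SzW}, which works from a representation $a=e^{a_1}\cdots e^{a_r}$ --- equivalent, for this symbol class, to the stated nonvanishing and winding-number hypotheses --- and never requires $T(a)$ to be invertible. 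The remaining items you defer (trace-norm convergence of the tail compressions, regularity inheritance of the factors $a_\pm$) are standard and fixable; the partial-index issue is the one genuine gap.
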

Therein $T(a)=(a_{j-k})_{j,k=0}^\infty$ is the block Toeplitz operator thought of as acting on $l^2(\Zp)$.
The constant $E[a]$ is given in terms of a well-defined operator determinant.
In the scalar case ($N=1$) the constant $E[a]$ admits a more explicit expression and is always nonzero.
In general ($N\ge2$),  the constant $E[a]$ can happen to be zero and explicit expressions are known 
only in very special cases (e.g., if $a$ or $a^{-1}$ are trigonometric matrix polynomials).

\medskip
The classical Szeg\"o-Widom limit theorem can be used to determine the asymptotics of 
$\det P_{n_1,n_2}L(a)P_{n_1,n_2}$  when the block size fits the size of the finite sections, i.e., when
$N$ divides $n_2-n_1$.
Indeed, for fixed $k\in\{0,\dots,N-1\}$ the identity
$$
\det P_{k,k+nN} L(a) P_{k,k+nN} =\det P_{0,nN} L(a^{[k]}) P_{0,nN} 
$$
holds, where $a^{[k]}$ are slightly modified symbols determined by  $U_{-k} L(a) U_k = L(a^{[k]})$.
These symbols yield the same constant $G$ but in general different values for $E$. 
This observation corroborates the statement made above that one cannot expect an asymptotics
of the kind \eqref{no-limit}, and gives an indication of the notion of fractal sequences in this case.

In Subsection \ref{sec:7.3}, as a consequence to our main results, we will obtain a limit theorem for the determinants 
$\det P_{n_1,n_2} L(a) P_{n_1,n_2}$ where the condition that $N$ divides $n_2-n_1$ is removed.

%%%%%%%%%
\bigskip
{\bf The almost Mathieu operators}.\  
The other most prominent example of operators to which the main results can be applied (under certain conditions) is the almost Mathieu operator defined on $l^2(\Z)$, 
\begin{equation}\label{Mathieu}
M_a =  U_1 +aI+U_{-1},
\end{equation}
where $a\in AP(\Z)$ is given by $a(n)=\beta\cos 2\pi(\xi n +\delta)$, and $\beta$, $\xi$, and $\delta$ are certain (real) constants. The interesting case is when $\xi$ is irrational. Then the spectrum of the almost Mathieu operator
is a Cantor-like set. This was an open conjecture attracting quite some attention before it was finally proved 
by Avila and Jitomirskaya \cite{AvilaJit}. For more on the history and on  general information about 
the almost Mathieu operators see \cite{AvilaJit,Boca} and the references therein.

One can ask the question for which values of $\xi$, $\beta$, $\delta$, and $\lambda$,
it is possible to apply our results to $A=M_a-\lambda I$ and obtain a corresponding asymptotics
\eqref{Sz-limit} and \eqref{Sz-limit-2}. One restriction is that $\xi$ is not a Liouville number, another one
is that $A$ is invertible on $l^2(\Z)$. These restrictions can probably not be relaxed very much.
If a conjecture raised in  Subsection \ref{sec:7.4} turns out to be true, then these are  the only restrictions
(assuming $M_a$ to be self-adjoint).

Regardless of whether the conjecture is true, we can still say that for non-Liouville numbers $\xi$,
 our results can be applied to $A=M_a-\lambda I$ whenever $|\lambda|$
is sufficiently large. For details see again Subsection \ref{sec:7.4}.

%%%%%%%%%
\bigskip
{\bf The notion of fractal sequences.}\
In Sections \ref{sec:2} and \ref{sec:3} we will introduce the notion of a fractal sequence. It is an integer sequence
$h:\Zp\to\Z$, i.e., $h=\{h(n)\}_{n\ge0}$, characterized by a property that guarantees the existence of certain limits
in various settings. More precisely, we will encounter 
\begin{enumerate}
\item[(i)]
fractal sequences for an additive subgroup $\Xi$ of $\R/\Z$,
\item[(ii)]
fractal sequences for a Banach subalgebra $\cA$ of $l^\iy(\Z)$,
\item[(iii)]
fractal sequences for a Banach subalgebra $\cR$ of $\cL(l^2(\Z))$.
\end{enumerate}
The name ``fractal'' is chosen because (in the appropriate setting, see Lemma \ref{frac subsequence}) each integer sequence has subsequence which is fractal. 
Furthermore, there seems to be a connection with the work of Roch and Silbermann 
on fractal algebras of approximation sequences  (see \cite{RoSi-96, Ro18} and the references therein). Although their notion of fractality is used in the setting of $C^*$-algebras, it can be given for Banach algebras as well. 
It is not too hard to see that if $h_1$ and $h_2$ are fractal sequences (in our sense), then the Banach algebra
$\mathcal{S}_{h_1,h_2}$ to be defined in Theorem \ref{thm:4.4} is fractal in the sense of Roch/Silbermann.
Note that this Banach algebra plays the crucial role in the proof of our main results in Section \ref{sec:5}.

%%%%%%%%%
\bigskip
{\bf On the assumptions encountered in the limit theorems.}\ 
In our main results, the fractal and the uniform version of the limit theorem
(Theorems  \ref{MainResult-1} and \ref{MainResult-2}), several assumptions have to be imposed.
One may wonder about the form of these assumptions and whether they are really necessary.
We want to argue here that they are at least close to be necessary. 
We will encounter three kinds of assumptions on the ``symbol'' $A$ that occurs in the limits
\eqref{Sz-limit} and \eqref{Sz-limit-2}, which could be referred to as
\begin{itemize}
\item[(i)]
a smoothness assumption,
\item[(ii)]
a regularity assumption,
\item[(iii)]
a diophantine assumption.
\end{itemize}
The first two already occur naturally in the Szeg\"o-Widom limit theorem, where $A=L(a)$.
Therein first condition comes down to assume the function $a$ to be sufficiently smooth. 
The second one is the requirement that $\det a(t)$ does not vanish and has winding number zero. Equivalently, this means that $\det a$ has a continuous logarithm or that the matrix function $a$ is the finite product of
exponentials. (To be precise, the equivalence holds for various classes of smooth functions.  It breaks down
for the rather exotic classes of Krein algebras. For details see Proposition 6.4 and the concluding remarks in \cite{Eh-SzW}.)  

In our limit theorems the first assumption corresponds to assuming a sufficiently fast decay of the diagonals of $A$
(therefore the restriction  to band-dominated operators) and in addition a sufficiently fast decay of the Fourier coefficients  of the almost periodic sequences that occur as the diagonals of $A$. More specifically, we will consider weighted Wiener-type  algebras of such operators, which involve an ``admissible''  weight $\beta$ on an additive
subgroup $\Xi\subseteq \R/\Z$ containing the Fourier spectra of the diagonals of $A$.

The regularity assumption in our limit theorems comes down to assuming that $A$ is a finite product of exponentials, i.e., 
$A=e^{A_1}\cdots e^{A_r}$, where $A_1,\dots,A_r$ have to belong to the afore-mentioned Wiener-type algebras.
This is stronger than invertibility, but seems natural in view of what is assumed in the classical Szeg\"o-Widom limit theorem. For another interpretation of this condition and its relation to inverse closedness, see
Section \ref{sec:7.4}.

Finally, the diophantine assumption is peculiar to the almost periodic case. It occurs in the form that the weight 
$\beta$ has to be ``compatible''.  The {\em compatibility condition} involves the underlying group $\Xi$ of the Fourier spectra,
and it is conceivable that not every $\Xi$ has an admissible and compatible weight. However, if the elements of  $\Xi$
have ``nice''  diophantine properties then the existence of such weights is guaranteed  by non-trivial results from
diophantine approximation (see the beginning of Section \ref{sec:7} and Subsection \ref{sec:7.2} in particular for details).
As already demonstrated in \cite{ERS} and as we will indicate next, without the compatibility condition the asymptotics of the determinants may not take the desired form, the reason being that the trace computations break down.

%%%%%%%%%
\bigskip
{\bf Method of proof: reduction of determinants to traces, and the compatibility condition.}\ 
Let us make one important point pertaining to the method of proof and
explaining how the compatibility condition arises from the asymptotics of the traces.
First of all, if one wants to study the asymptotics of the determinants
 $\det P_{h_1(n),h_2(n)} A P_{h_1(n),h_2(n)}$ for band-dominated operators $A$ one has to be able to describe the  asymptotics of the traces of the finite sections 
of such operators in the first place. Indeed, this can be seen by considering the special case
of a diagonal operator $A= e^{a}I$ with $a\in AP(\Z)$ and notice that 
$$
\det  \left(P_{h_1(n),h_2(n)} (e^a I) P_{h_1(n),h_2(n)}\right)
=\exp\left( \trace  P_{h_1(n),h_2(n)} (a I) P_{h_1(n),h_2(n)}\right).
$$
Conversely, our method of establishing the determinant asymptotics consists in assuming that 
$A=e^{A_1}\cdots e^{A_r}$, where $A_k$ are operators taken from certain subclasses of $\mathcal{OAP}$, and then reducing asymptotics of $\det P_{h_1(n),h_2(n)} A P_{h_1(n),h_2(n)}$ to the asymptotics of the traces
$$
\trace\left(P_{h_1(n),h_2(n)}(A_1+\dots+A_r)P_{h_1(n),h_2(n)}\right).
$$
This idea is realized in the first of our main results, the {\em abstract version of the limit theorem} (Theorem
\ref{MainResult-0}). Notice that the above trace equals  
\begin{equation}\label{trace.sum}
  \trace (P_{h_1(n),h_2(n)} (aI) P_{h_1(n),h_2(n)})=\sum_{k=h_1(n)}^{h_2(n)-1}a(k),
\end{equation}
where  $a=D(A_1+\dots+A_r)\in AP(\Z)$. Thus, to summarize, we are led to the problem of
describing the asymptotics of traces \eqref{trace.sum} for $a=\{a(k)\}_{k\in\Z}\in AP(\Z)$.

Since we assume that $h(n):=h_2(n) - h_1(n)\to \infty$ as $n\to\infty$, the theory of almost periodic sequences implies that
 \eqref{trace.sum} equals 
$$
h(n)\cdot M(a) + o(h(n)), \qquad n\to\iy, 
$$
where $M(a)$ is the mean of $a\in AP(\Z)$ defined in \eqref{mean.Ma}.  However, what we would like to have 
(in view of the desired asymptotics \eqref{Sz-limit})
is an asymptotics of the kind
\begin{equation}\label{tr-asymp}
h(n)\cdot M(a) + C_{h_1,h_2} + o(1), \qquad n\to\iy,
\end{equation}
with some constant $C_{h_1,h_2}$ possibly depending on the underlying sequences $h_1$ and $h_2$.
In order to see when we can expect such a behavior,  consider the case where $a\in AP(\Z)$ is a finite sum of the form 
\begin{equation}\label{aps-1}
a=\sum_{\xi} a_\xi e_\xi,\qquad a_\xi\in\C. 
\end{equation}
Then we have (for details see the proof of Theorem \ref{thm trace eva} below)
$$%\begin{equation}
    \sum_{k=h_1(n)}^{h_2(n)-1}a(k)  
   = h(n)\cdot M(a) + \sum_{\xi\neq0}a_\xi\frac{e^{2\pi i h_1(n)\xi}-e^{2\pi i h_2(n)\xi}}{1-e^{2\pi i\xi}}.
$$%\end{equation}
In order for the last term to converge as $n\to\iy$ one should assume that the sequences $h_1$ and $h_2$ are such that the limits
$$
\lim_{n\to\infty} e^{2\pi i h_j(n)\xi}=:\tau_j(\xi),\qquad j=1,2,
$$
exist for each $\xi$ over which the summation is taken. This condition will later be conceptionalized by saying that
$h_1$ and $h_2$ are {\em fractal sequences}. If this holds then the trace equals
\begin{equation}\label{f1.20}
h(n)\cdot M(a) + \sum_{\xi\neq0}a_\xi\frac{\tau_1(\xi)-\tau_2(\xi)}{1-e^{2\pi i\xi}}+o(1), \qquad n\to\iy,
\end{equation}
where the second term is a well-defined  constant $C_{h_1,h_2}$ depending on $h_1$ and $h_2$.

Things get more complicated if we consider $a\in AP(\Z)$ for which the sum \eqref{aps-1} involves infinitely many terms. Then we encounter a ``small denominator problem'' in the expression for the traces and in its expected asymptotics \eqref{f1.20}. 
In some cases, when the Fourier spectrum of $a$ is ``non-Liouville'' and the decay of the Fourier coefficients $a_\xi$ is 
sufficiently fast we can still guarantee the asymptotics \eqref{f1.20}. A corresponding result is established
in Theorem \ref{thm trace eva}, which involves the {\em compatibility condition}.
The relationship with diophantine properties will be discussed in Section \ref{sec:7} 
(see Subsection \ref{sec:7.1} in particular). 

On the other hand, there exists $a\in AP(\Z)$ for which an asymptotics of the form
\eqref{tr-asymp} does not hold, even if we assume $h_1$ and $h_2$ to be fractal sequences.
This was shown by explicit (though complicated) examples in Section 6 of  \cite{ERS}, 
and will be briefly mentioned at the beginning of Section \ref{sec:7}.
Therefore, one cannot completely  eliminate the diophantine assumption in the
formulation of our limit theorems.

\bigskip
{\bf Outline of the paper.}
The paper is organized as follows. Section 2 
deals with Banach algebras of almost periodic sequences. The notions of fractality and of admissible and compatible
weights on an additive subgroup $\Xi$ of $\R/\Z$ are introduced as well.

In Section 3 we consider Banach algebras of operators on $l^2(\Z)$ which are characterized by properties called
suitablity, shift-invariance and rigidity. Based on Section 2, concrete examples of such Banach algebras are described.

Sections 4 and 5 are devoted to the abstract version of the limit theorem and its proof. Note that the abstract version
applies to operators $A$ taken from any suitable, shift-invariant, rigid, and unital Banach algebra of operators on 
$l^2(\Z)$. The proof is based on a Banach algebra method introduced by one of the authors in \cite{Eh-SzW} for the 
classical  Szeg\"{o}-Widom limit theorem. This approach was also employed in \cite{ERS}. 

At the end of Section 5 we establish the fractal version of the limit theorem using the previous results.
In Section 6 we obtain the uniform version, and we show that certain quantities appearing in the limit are continuous functions.

Section 7 discusses a variety of issues that naturally arise when trying to apply the limit theorems to concrete operators. 
In particular we will discuss the case of finitely generated groups and explain the relationship between the compatibility condition and diophantine properties. We will also establish a generalization of the block Szeg\"o-Widom limit theorem
dealing with ``non-standard'' finite sections of Laurent operators. Finally a conjecture concerning an inverse closedness problem is raised, and the applicability of our results to almost Mathieu operators is discussed. 

%%%%%%%%%%%%%%%%%%%%%%%%%%%%%%%%%%%%%%%%%%%%%%%%%%%%%%%%%%%%%%%%%%%%%%
\section{\Large\bf Banach algebras of almost periodic sequences}
\label{sec:2}

A Banach subalgebra $\mathcal{A}$ of $l^\infty(\Z)$ is called \emph{shift-invariant} if\
 for each $a\in \mathcal{A}$ and $n\in \Z$ we have $U_na\in \mathcal{A}$ and
    \[\|U_na\|_\mathcal{A} = \|a\|_\mathcal{A}.\]

Let $\mathcal{A}$ be a shift-invariant Banach subalgebra of $l^\infty(\Z)$. A sequence $h:\Zp\to \Z$ is called \emph{fractal for $\mathcal{A}$} if for each $a\in\cA$ there exists an element $Ua\in \cA$ such that 
\begin{equation}
        \lim_{n\to \infty}\|U_{-h(n)}a - U a\|_\mathcal{A} = 0.\label{eqn for fractal}
\end{equation}
It is easy to see that then the map $U:\cA\to\cA$ is an isometric Banach algebra homomorphism. 
Indeed, note that $U$ is multiplicative because each $U_n$ acts multiplicatively on $\cA$.
If $\cA$ contains the unit element of $l^\infty(\Z)$, then $U$ is unital.
As the sequence $h$ determines uniquely the map $U$ by \eqref{eqn for fractal},
 we will also say that $h$ is \emph{fractal for $\cA$ with associated $U$}.

\medskip
In what follows we are going to describe shift-invariant Banach subalgebras $\cA$ of $AP(\Z)\subseteq l^\infty(\Z)$ which 
are characterized by an additive subgroup of $\R/\Z$. For this purpose we extend our conceptual framework
with the following definitions. 

\medskip
For an additive subgroup $\Xi$ of $\R/\Z$ denote by 
$\HomXT$ the set of all group homomorphisms $\tau:\Xi\to\T$. 
Recall that  $\T=\{t\in\C\,:\,|t|=1\}$ is the (multiplicative) circle group.
%In this setting, we think of  $\Xi$ as equipped with the discrete topology, whereas
Then  $\HomXT$ is a compact abelian group with group multiplication defined by
$(\tau_1 \tau_2)(\xi)=\tau_1(\xi)\tau_2(\xi)$. The topology of $\HomXT$ arises from the local bases at $\tau$ given
by the collection of all neighborhoods
\begin{equation}\label{neigborhood}
U_{\xi_1,\dots,\xi_N;\varepsilon}[\tau]=\Big\{\, \tau'\in\HomXT \,:\, |\tau'(\xi_k)-\tau(\xi_k)|<\varepsilon \mbox{ for all } 1\le k\le N\,\Big\}
\end{equation}
with $\varepsilon>0$, $N\in\mathbb{N}$, $\xi_1,\dots,\xi_N\in\Xi$. Note that if we consider $\Xi$ with discrete topology,
then $\HomXT$ is the dual group of $\Xi$.

Given an additive subgroup $\Xi$ of $\R/\Z$, we say that a sequence $h:\Zp\to \Z$ is \emph{fractal for $\Xi$} if for each 
$\xi\in\Xi$ the limit
\begin{equation}\label{fractal-tau}
		\tau(\xi) := \lim\limits_{n\to\infty} e^{2\pi i h(n)\xi}
\end{equation}
exists. Obviously, in this case, $\tau\in \HomXT$. Therefore we will say that the sequence $h$ is \emph{fractal for $\Xi$ with associated $\tau\in \HomXT$}.

We can give another interpretation of \eqref{fractal-tau}. To each $n\in\Z$ there exists a naturally associated
$\tau_n\in \HomXT$ defined by $\tau_n(\xi)=e^{2\pi i  n \xi}$. Using this notation, 
$h$ being fractal for $\Xi$ is equivalent  to
saying that $\tau_{h(n)}\to \tau$ in the topology of $\HomXT$ as $n\to\iy$.

\begin{prop}\label{prop fractal A} Let $\Xi$ be an additive subgroup of $\R/\Z$, and let $\mathcal{A}$ be a shift-invariant Banach subalgebra of $AP(\Z)$ such that the linear span of
    \[\{e_\xi:\xi \in \Xi\}\]
is contained and dense in $\mathcal{A}$.  Let $h:\Zp\to\Z$.
\begin{enumerate}[label=(\alph*)]
\item
The sequence $h$ is fractal for $\cA$ if and only if it is fractal for $\Xi$.
\item
If this is true, and  if $h$ is fractal for $\Xi$ with associated $\tau\in \HomXT$, then $h$ is fractal for $\cA$ with the
associated $U$ given by 
\begin{equation}\label{Utau on A}
U: \mathcal{A}\to \mathcal{A}, \quad
\sum_\xi a_\xi e_\xi \mapsto \sum_\xi a_\xi \tau(\xi)e_\xi.
\end{equation}
Therein, the operation is defined for finite linear combinations and extends by continuity to all of $\cA$.
\end{enumerate}
\end{prop}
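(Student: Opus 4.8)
The plan is to reduce everything to the elementary identity
\[
U_{-h(n)} e_\xi = e^{2\pi i h(n)\xi}\, e_\xi ,
\]
which follows at once from the definitions, since $(U_{-h(n)}e_\xi)(k) = e_\xi(k+h(n)) = e^{2\pi i h(n)\xi}\, e_\xi(k)$. This converts the shift action on the generators $e_\xi$ into scalar multiplication by precisely the quantities $e^{2\pi i h(n)\xi}$ whose convergence defines fractality for $\Xi$, so the two notions become directly comparable on the dense span of the $e_\xi$.

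For the implication that fractality for $\cA$ implies fractality for $\Xi$, I would apply the defining relation \eqref{eqn for fractal} to the element $a=e_\xi\in\cA$. This produces some $Ue_\xi\in\cA$ with $\| c_n e_\xi - Ue_\xi\|_\cA\to0$, where $c_n:=e^{2\pi i h(n)\xi}\in\T$. The scalars $c_n$ lie in the compact set $\T$, so to conclude that they converge it suffices to show all subsequential limits agree. If $c_{n_j}\to c$ and $c_{m_j}\to c'$, then $c_{n_j}e_\xi\to c\,e_\xi$ and $c_{m_j}e_\xi\to c'\,e_\xi$ in $\cA$, while both subsequences inherit the limit $Ue_\xi$ of the full sequence; hence $c\,e_\xi=Ue_\xi=c'\,e_\xi$. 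Since $e_\xi\neq0$ forces $\|e_\xi\|_\cA>0$, we get $c=c'$, so $c_n$ converges and $h$ is fractal for $\Xi$.

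For the converse and part (b), assume $h$ is fractal for $\Xi$ with associated $\tau$, and set $Up:=\sum_\xi p_\xi\tau(\xi)e_\xi$ on a finite combination $p=\sum_\xi p_\xi e_\xi$. Using the identity above together with the triangle inequality and shift-invariance of the norm,
\[
\|U_{-h(n)}p - Up\|_\cA \;\le\; \sum_\xi |p_\xi|\,\bigl|e^{2\pi i h(n)\xi}-\tau(\xi)\bigr|\,\|e_\xi\|_\cA \;\longrightarrow\; 0,
\]
the sum being finite. To reach an arbitrary $a\in\cA$ I would run the standard three-$\eps$ argument: each $U_{-h(n)}$ is an isometry of $\cA$, so approximating $a$ by such a $p$ within $\eps$ shows that $\{U_{-h(n)}a\}$ is Cauchy, hence convergent in the Banach space $\cA$ to some $Ua$; this is exactly fractality for $\cA$. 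Letting $n\to\iy$ in $\|U_{-h(n)}a\|_\cA=\|a\|_\cA$ shows $U$ is isometric, hence continuous, and since it agrees with $\sum_\xi p_\xi\tau(\xi)e_\xi$ on the dense set of finite combinations, it is the continuous extension recorded in \eqref{Utau on A}.

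The only genuinely delicate step I anticipate is the first implication, where one must recover convergence of the scalar sequence $e^{2\pi i h(n)\xi}$ from convergence in the algebra norm; the compactness-plus-uniqueness argument handles this cleanly, the essential input being that $e_\xi$ is a nonzero element, so that scalar multiplication is injective on its span. Everything else is a routine density and uniform-boundedness argument resting on the shifts being isometries.
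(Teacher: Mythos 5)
Your proof is correct and follows essentially the same route as the paper: reduce to the identity $U_{-h(n)}e_\xi = e^{2\pi i h(n)\xi}e_\xi$, extract convergence of the scalars from convergence in $\cA$ using $\|e_\xi\|_\cA>0$, and then extend from finite linear combinations to all of $\cA$ by the standard density argument using that the shifts are isometries. The only cosmetic difference is in the forward direction, where you use compactness of $\T$ and uniqueness of subsequential limits while the paper deduces directly that the scalar sequence $e^{2\pi i h(n)\xi}$ is Cauchy; both hinge on the same fact that scalar multiplication is injective on the span of the nonzero element $e_\xi$.
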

\begin{proof}
Let us first show that if $h$ is fractal for $\cA$ with associated $U$, then it is fractal for $\Xi$.
Indeed, for each $\xi\in\Xi$, 
    \[\|U_{-h(n)}e_\xi - Ue_\xi\|_\mathcal{A} = \|e^{2\pi i h(n)\xi}e_\xi - Ue_\xi\|_\mathcal{A} \to 0\]
as $n\to\infty$. This implies that $e^{2\pi i h(n)\xi} e_\xi$ is a Cauchy sequence in $\cA$, and therefore
so is the scalar sequence $e^{2\pi i h(n)\xi}$. Hence the limit \eqref{fractal-tau} exists.
In fact, we have $Ue_\xi=\tau(\xi) e_\xi$.

Now let us assume that $h$ is fractal for $\Xi$ with associated $\tau\in \HomXT$. We are going to show that 
$h$ is fractal for $\cA$ with associated $U$ given above. First consider those $a\in \cA$ which can be written as
a finite linear combination $a=\sum_{\xi} a_\xi e_\xi$. Since $U_{-h(n)}e_\xi=e^{2\pi i h(n) \xi} e_\xi$ it follows that 
$$
U_{-h(n)} a = \sum_{\xi} a_\xi e^{2\pi i h(n)} e_\xi \to  \sum_{\xi} a_\xi \tau(\xi) e_\xi =: Ua,
$$
as $n\to \infty$, where the convergence is in the norm of $\cA$. Considering $U_{-h(n)}$ as a bounded linear operator
on the Banach space $\cA$ we thus have strong convergence on the dense subset of $\cA$. Because of shift-invariance, i.e., 
$\|U_{-h(n)}a\|_{\cA} = \|a\|_{\cA}$, the operator norm of $U_{-h(n)}$ equals one. A standard approximation argument 
implies that we have strong convergence of $U_{-h(n)}$ on all of $\cA$, and that the operator $U$ (already defined
on a dense subset) extends by continuity to all of $\cA$. In fact, we have $\|Ua\|_{\cA}=\|a\|_{\cA}$.
\end{proof}

Let us now proceed with describing a concrete class of shift-invariant Banach algebras  $\cA$
 to which the previous proposition can be applied. They arise from any additive subgroup $\Xi$ of $\R/\Z$ and
 can be considered as a weighted Wiener-type algebra of almost periodic sequences. 
 We call a mapping $\beta: \Xi\to \R^+$ an \emph{admissible weight on $\Xi$} if
\begin{equation}\label{weight.adm}
1\le \beta(\xi_1+\xi_2)\le \beta(\xi_1)\beta(\xi_2)
\end{equation}
for each $\xi_1, \xi_2\in \Xi$. For such $\Xi$ and $\beta$ let $APW(\Z, \Xi, \beta)$ be the set of all sequences $a\in l^\infty(\Z)$ of the form
\begin{equation}  \label{APWsum}
     a = \sum_{\xi \in \Xi} a_\xi e_\xi
\end{equation}
for which
        \begin{equation}\label{APWnorm}
            \|a\|_{\Xi, \beta} := \sum_{\xi\in\Xi}\beta(\xi)|a_\xi|<\infty.
        \end{equation}
As usual, in the case when $\Xi$ is uncountable it is agreed that at most countably many of the $a_\xi$'s 
are nonzero, and only over those the sum is taken.

The following result is almost obvious and was proved in \cite[Thm.~2.6]{ERS}.
It implies that Proposition \ref{prop fractal A}  can be applied to $\cA=APW(\Z, \Xi, \beta)$. 
In addition, it is easy to see that the map $U$ is given by formula \eqref{Utau on A} for all $a\in APW(\Z, \Xi, \beta)$.

\begin{prop}\label{p2.2}
Let $\beta$ be an admissible weight on an additive subgroup $\Xi$ of $\R/\Z$. Then $APW(\Z, \Xi, \beta)$ is a shift-invariant, continuously embedded Banach subalgebra of $AP(\Z)$, and the linear span of $\{e_\xi:\xi\in \Xi\}$ is a dense subset.
\end{prop}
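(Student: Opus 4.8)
The plan is to recognize $APW(\Z,\Xi,\beta)$ as the weighted convolution algebra $l^1(\Xi,\beta)$ on the discrete abelian group $\Xi$, transported into $l^\infty(\Z)$ through the multiplicative relation $e_\xi e_\eta=e_{\xi+\eta}$. Concretely, I would first check that the coefficients $a_\xi$ in the representation \eqref{APWsum} are uniquely determined by $a$, so that the norm \eqref{APWnorm} is well-defined. This is immediate from the Fourier theory of $AP(\Z)$ recalled above: since $a_\xi=M(ae_{-\xi})$ recovers each coefficient, the linear map $a\mapsto(a_\xi)_{\xi\in\Xi}$ is injective, and it identifies $APW(\Z,\Xi,\beta)$ isometrically with the weighted sequence space $l^1(\Xi,\beta)=\{(c_\xi):\sum_\xi\beta(\xi)|c_\xi|<\infty\}$.

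Next I would establish the continuous embedding and completeness together. Because $\beta\ge 1$ and $\|e_\xi\|_\infty=1$, any $a$ as in \eqref{APWsum} satisfies $\|a\|_\infty\le\sum_\xi|a_\xi|\le\|a\|_{\Xi,\beta}$; the series therefore converges absolutely in $l^\infty(\Z)$, and since each $e_\xi\in AP(\Z)$ and $AP(\Z)$ is closed in $l^\infty(\Z)$, it follows that $a\in AP(\Z)$ with $\|a\|_\infty\le\|a\|_{\Xi,\beta}$, which is the continuous embedding. Completeness then reduces to completeness of $l^1(\Xi,\beta)$, which is standard (the rescaling $(c_\xi)\mapsto(\beta(\xi)c_\xi)$ is an isometric isomorphism onto ordinary $l^1(\Xi)$): a Cauchy sequence in $APW(\Z,\Xi,\beta)$ has a coefficientwise limit in $l^1(\Xi,\beta)$, and by the embedding just proved this limit lies in $AP(\Z)$ and is the norm limit.

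For the algebra property I would use $e_\xi e_\eta=e_{\xi+\eta}$ to obtain, for $a=\sum_\xi a_\xi e_\xi$ and $b=\sum_\eta b_\eta e_\eta$, the convolution formula $(ab)_\zeta=\sum_{\xi+\eta=\zeta}a_\xi b_\eta$. The admissibility bound $\beta(\xi+\eta)\le\beta(\xi)\beta(\eta)$ then yields
\[
\|ab\|_{\Xi,\beta}\le\sum_{\xi,\eta}\beta(\xi+\eta)|a_\xi||b_\eta|\le\sum_{\xi,\eta}\beta(\xi)|a_\xi|\,\beta(\eta)|b_\eta|=\|a\|_{\Xi,\beta}\|b\|_{\Xi,\beta},
\]
the rearrangement of the double sum being justified by absolute convergence (Tonelli). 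This shows simultaneously that the product lies in $APW(\Z,\Xi,\beta)$ and that the norm is submultiplicative, so $APW(\Z,\Xi,\beta)$ is a Banach algebra.

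Finally, shift-invariance follows from $U_ne_\xi=e^{-2\pi i n\xi}e_\xi$: the shift multiplies each coefficient by a unimodular factor, leaving $|a_\xi|$, the Fourier spectrum (inside $\Xi$), and hence $\|a\|_{\Xi,\beta}$ unchanged. Density of the span of $\{e_\xi:\xi\in\Xi\}$ is obtained by truncating the absolutely convergent series \eqref{APWsum} to finite partial sums, whose $\Xi,\beta$-distance to $a$ is the tail $\sum_{\xi\notin F}\beta(\xi)|a_\xi|$, which tends to $0$ as the finite set $F$ exhausts the (countable) support. I do not expect any genuine obstacle here, as the proposition is in essence a weighted $l^1$ computation; the only point needing care is the well-definedness of the norm via uniqueness of the Fourier expansion, and the sole essential use of the admissibility hypothesis occurs in the submultiplicativity estimate.
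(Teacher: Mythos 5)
Your proof is correct, and there is nothing in the paper itself to compare it against: the authors give no argument for this proposition, calling it ``almost obvious'' and citing \cite[Thm.~2.6]{ERS}, and your route---the isometric identification of $APW(\Z,\Xi,\beta)$ with the weighted group algebra $l^1(\Xi,\beta)$, with uniqueness of coefficients via $a_\xi=M(ae_{-\xi})$, admissibility used exactly once for submultiplicativity, and $U_ne_\xi=e^{-2\pi i n\xi}e_\xi$ giving shift-invariance---is precisely the standard argument such a citation presupposes. The only point to tidy is ordering: the coefficient-recovery formula $a_\xi=M(ae_{-\xi})$ already assumes $a\in AP(\Z)$, which you establish only in the following paragraph (absolute convergence of the series plus closedness of $AP(\Z)$ in $l^\infty(\Z)$), so that containment should be stated first; this is a rearrangement, not a gap.
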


We need one more property about weights. A weight $\beta$ is said to be \emph{compatible on} $\Xi$ if
\begin{equation}\label{compat.cond}
C_\beta:= \inf_{\xi\in \Xi, \xi \neq 0} \beta(\xi)\cdot\|\xi\|_{\R/\Z} >0.
\end{equation}
Therein, we use the natural metric on $\R/\Z$ given by
\begin{equation}
\|\xi\|_{\R/\Z} = \inf \{|\xi - n|:n\in \Z\}.
\end{equation}
The next theorem gives us information about the asymptotics of the trace of the finite sections
$P_{h_1(n),h_2(n)}(aI)P_{h_1(n),h_2(n)}$ under certain conditions (see also \eqref{trace.sum}).

\begin{thm}\label{thm trace eva}
Let $\beta$ be an admissible and compatible weight on an additive subgroup $\Xi$ of $\R/\Z$. If $h_1,h_2:\Zp\to\Z$ 
are two fractal sequences for $\Xi$ with associated $\tau_{1},\tau_{2}\in \mathrm{Hom}(\Xi, \mathbb{T})$, respectively, then
for each $a\in APW(\Z,\Xi,\beta)$ we have
        \begin{equation}\label{trace-conv}
            \sum_{k=h_1(n)}^{h_2(n)-1} a(k) = \left(h_2(n) - h_1(n)\right)\cdot M(a) + F_a(\tau_{1}) - F_a(\tau_{2})+ o(1)
        \end{equation}
as $n\to \infty$, where 
	\begin{equation}\label{C-tau}
		F_a(\tau):= \sum\limits_{\xi\in \Xi, \xi\neq 0}a_\xi \, \frac{\tau(\xi)}{1-e^{2\pi i \xi}}
	\end{equation}
is a well defined constant depending on $a$ and $\tau\in \mathrm{Hom}(\Xi, \mathbb{T})$.
\end{thm}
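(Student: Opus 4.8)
The plan is to reduce everything to the single exponentials $e_\xi$ and then pass to the general case by a uniform-bound-plus-density argument, with the compatibility condition doing the essential work of taming the small denominators. It is convenient to work with the linear functional
\[
S_n(a):=\sum_{k=h_1(n)}^{h_2(n)-1}a(k)-\bigl(h_2(n)-h_1(n)\bigr)M(a),
\]
so that the claim becomes $S_n(a)\to F_a(\tau_1)-F_a(\tau_2)$ as $n\to\iy$. First I would record the geometric sum identity: for $\xi\neq0$ in $\R/\Z$ one has $\sum_{k=h_1(n)}^{h_2(n)-1}e^{2\pi i k\xi}=\bigl(e^{2\pi i h_1(n)\xi}-e^{2\pi i h_2(n)\xi}\bigr)/\bigl(1-e^{2\pi i\xi}\bigr)$, while for $\xi=0$ it is just $h_2(n)-h_1(n)$. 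Since $M(e_0)=1$ and $M(e_\xi)=0$ for $\xi\neq0$, this gives on a finite linear combination $a=\sum_\xi a_\xi e_\xi$ the exact formula $S_n(a)=\sum_{\xi\neq0}a_\xi\bigl(e^{2\pi i h_1(n)\xi}-e^{2\pi i h_2(n)\xi}\bigr)/\bigl(1-e^{2\pi i\xi}\bigr)$; letting $n\to\iy$ term by term, and using that $h_1,h_2$ are fractal for $\Xi$ so that $e^{2\pi i h_j(n)\xi}\to\tau_j(\xi)$, yields exactly $F_a(\tau_1)-F_a(\tau_2)$. Thus the finite case is immediate.

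The crux is the estimate that makes the small denominators harmless. The key inequality is $|1-e^{2\pi i\xi}|=2\sin\bigl(\pi\|\xi\|_{\R/\Z}\bigr)\ge 4\|\xi\|_{\R/\Z}$, coming from the concavity of $\sin$ on $[0,\pi/2]$, which combined with the compatibility condition $\beta(\xi)\|\xi\|_{\R/\Z}\ge C_\beta$ gives $\bigl|1-e^{2\pi i\xi}\bigr|^{-1}\le\beta(\xi)/(4C_\beta)$ for all $\xi\neq0$. From this I would deduce two things. First, $F_a(\tau)$ converges absolutely with $|F_a(\tau)|\le\|a\|_{\Xi,\beta}/(4C_\beta)$, uniformly in $\tau\in\HomXT$. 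Second, the decisive uniform bound
\[
|S_n(a)|\le\frac{\|a\|_{\Xi,\beta}}{2C_\beta}\qquad\text{for every }n.
\]
For finite combinations this follows by summing the per-frequency estimate $|S_n(e_\xi)|\le 2/|1-e^{2\pi i\xi}|$; to extend it to all of $APW(\Z,\Xi,\beta)$ I would use that $\|\cdot\|_\infty\le\|\cdot\|_{\Xi,\beta}$ (because $\beta\ge1$), so that approximation in $\|\cdot\|_{\Xi,\beta}$ forces uniform, hence pointwise, convergence of the sequences together with convergence of their means. Since $S_n(a)$ depends continuously on $a$ in the uniform norm for each fixed $n$, the bound passes to the limit.

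With these pieces in hand the general case is a standard three-$\eps$ argument. Given $a\in APW(\Z,\Xi,\beta)$ and $\eps>0$, choose a finite truncation $a^{(N)}$ with $\|a-a^{(N)}\|_{\Xi,\beta}<\eps$ (possible by density, Proposition \ref{p2.2}). Writing $R_N(n):=S_n(a^{(N)})-\bigl(F_{a^{(N)}}(\tau_1)-F_{a^{(N)}}(\tau_2)\bigr)$, linearity of $S_n$ and of $F_{(\cdot)}$ gives
\[
S_n(a)-\bigl(F_a(\tau_1)-F_a(\tau_2)\bigr)=S_n(a-a^{(N)})+R_N(n)+\bigl(F_{a-a^{(N)}}(\tau_2)-F_{a-a^{(N)}}(\tau_1)\bigr).
\]
The first and last terms are each bounded by $\eps/(2C_\beta)$ uniformly in $n$, by the two uniform bounds above, while $R_N(n)\to0$ as $n\to\iy$ by the finite case. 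Hence $\limsup_n|S_n(a)-(F_a(\tau_1)-F_a(\tau_2))|\le\eps/C_\beta$, and letting $\eps\to0$ gives the claim.

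I expect the main obstacle to be exactly the passage from finite to infinite Fourier sums, that is, justifying that the individual convergences $e^{2\pi i h_j(n)\xi}\to\tau_j(\xi)$ may be summed against the coefficients $a_\xi/(1-e^{2\pi i\xi})$. This is precisely where the compatibility condition is indispensable: without the lower bound $\beta(\xi)\|\xi\|_{\R/\Z}\ge C_\beta$, the denominators $1-e^{2\pi i\xi}$ could be arbitrarily small relative to the decay of $a_\xi$ afforded by $\|\cdot\|_{\Xi,\beta}$, so that both the series defining $F_a$ and the uniform bound on $S_n$ would fail, and the trace asymptotics need not take the stated form.
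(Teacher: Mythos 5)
Your proof is correct. It rests on the same two pillars as the paper's own argument: the geometric-sum identity that reduces $S_n(a)$ to the per-frequency terms, and the compatibility estimate $|1-e^{2\pi i\xi}|^{-1}=\bigl(2|\sin \pi\xi|\bigr)^{-1}\le \bigl(4\|\xi\|_{\R/\Z}\bigr)^{-1}\le \beta(\xi)/(4C_\beta)$, which is exactly the paper's inequality \eqref{estimate-beta}. Where you diverge is in how the termwise convergence $e^{2\pi i h_j(n)\xi}\to\tau_j(\xi)$ is upgraded to convergence of the whole series. The paper works with a general $a$ from the start: it interchanges the finite sum over $k$ with the Fourier series (legitimate by absolute and uniform convergence of \eqref{APWsum}), obtains the exact identity $S_n(a)=\sum_{\xi\neq0}a_\xi\bigl(e^{2\pi i h_1(n)\xi}-e^{2\pi i h_2(n)\xi}\bigr)/\bigl(1-e^{2\pi i\xi}\bigr)$, and then applies dominated convergence for series, with $2|a_\xi|\beta(\xi)/(4C_\beta)$ as the summable dominating sequence. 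You instead prove the finite case, establish the uniform bound $|S_n(a)|\le\|a\|_{\Xi,\beta}/(2C_\beta)$ for all $n$ (extending it from finite combinations to all of $APW(\Z,\Xi,\beta)$ via $\|\cdot\|_\infty\le\|\cdot\|_{\Xi,\beta}$ and the $\|\cdot\|_\infty$-continuity of each functional $S_n$), and close with a three-$\varepsilon$ density argument. The two routes are equivalent in substance---your argument is essentially an unrolled proof of the dominated-convergence (Tannery) step---but yours isolates a reusable quantitative fact that the paper leaves implicit: the functionals $S_n$ are equicontinuous on $\bigl(APW(\Z,\Xi,\beta),\|\cdot\|_{\Xi,\beta}\bigr)$, which together with density of the trigonometric polynomials forces the limit, and which also makes transparent why the conclusion collapses without the compatibility condition. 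The paper's version is shorter; yours is marginally longer but makes the mechanism more explicit.
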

\begin{proof}
First note that $F_a(\tau)$ is well-defined for each $\tau\in \mathrm{Hom}(\Xi, \T)$ with $a$ given by 
\eqref{APWsum}. Indeed, the corresponding series 
\eqref{C-tau} is absolutely convergent since $\beta$ is compatible, since \eqref{APWnorm} is finite, and since
\begin{equation}\label{estimate-beta}
\frac{1}{|1-e^{2\pi i \xi}|} =\frac{1}{2|\sin(\pi \xi)|}\le \frac{1}{4\, \|\xi\|_{\R/\Z}} \le \frac{\beta(\xi)}{4C_\beta},
\qquad \xi\in\Xi,\xi\neq0.
\end{equation}
Recall that we identify $\xi\in\R/\Z$ with any real number representing it.

Now let $a\in APW(\Z,\Xi,\beta)$ be of the form \eqref{APWsum}. Since
$$
            \sum_{k=h_1(n)}^{h_2(n)-1} a(k)  = \sum_{\xi\in \Xi}a_\xi \left(\sum_{k=h_1(n)}^{h_2(n)-1}e_\xi(k)\right),
$$
$M(a) = a_0$ and $e_\xi(k)=e^{2\pi i k \xi}$, we obtain that 
\begin{align*}
            \sum_{k=h_1(n)}^{h_2(n)-1} a(k) - (h_2(n) - h_1(n))\cdot M(a)  &= \sum_{\xi\in \Xi, \xi \neq 0}a_\xi \left(\sum_{k=h_1(n)}^{h_2(n)-1}e_\xi(k)\right)\\
            &= 
            \sum_{\xi\in \Xi, \xi \neq 0}a_\xi   \,
             \frac{e^{2\pi i h_1(n)\xi}-e^{2\pi i h_2(n)\xi}}{1-e^{2\pi i\xi}}.
\end{align*}
Since $h_1$ and $h_2$ are fractal for $\Xi$ with associated $\tau_1$ and $\tau_2$ we have that for each fixed $\xi\in \Xi$,
$\xi\neq0$,
        \[\frac{e^{2\pi i h_1(n)\xi}-e^{2\pi i h_2(n)\xi}}{1-e^{2\pi i\xi}} 
        \;\;\to\;\; 
        \frac{\tau_1(\xi)-\tau_2(\xi)}{1-e^{2\pi i\xi}}\]
as $n\to \infty$ by (\ref{fractal-tau}). 
Now use the estimate \eqref{estimate-beta} together with a dominated convergence argument and the finiteness of \eqref{APWnorm} to see that 
$$
 \sum_{\xi\in \Xi, \xi \neq 0}a_\xi   \,
             \frac{e^{2\pi i h_1(n)\xi}-e^{2\pi i h_2(n)\xi}}{1-e^{2\pi i\xi}}
\to              
\sum_{\xi\in \Xi, \xi \neq 0}a_\xi   \,
             \frac{\tau_1(\xi)-\tau_2(\xi)}{1-e^{2\pi i\xi}}=F_a(\tau_1)-F_a(\tau_2)
$$
as $n\to\infty$. This concludes the proof of \eqref{trace-conv}.
\end{proof}

Note that $F_a$ can be considered as a function in $\tau\in\HomXT$.
In fact, we will show later in Proposition \ref{C is cont} that $F_a$ is continuous on $\HomXT$ for each fixed $a$.

%%%%%%%%%%%%%%%%%%%%%%%%%%%%%%%%%%%%%%%%%%%%%%%%%%%%%%%%%%%%%%%%%%%%%%%%%%%%%%%%%%%%%%%%%%%%%%%%%%%%%%%%%%%%%%%%%%%%%%%%%%%%%%%%%%%%%%%%%%%%%%%%%%%%%%%%%%%%%%%%
\section{\Large\bf Banach algebras of operators on $\boldsymbol{l^2(\Z)}$}
\label{sec:3}

In this section we are going to characterize certain classes of Banach algebras of operators on $l^2(\Z)$ for which we will prove an abstract  version of a Szeg\"{o}-Widom type limit theorem in Section \ref{sec:5}.
The proof of this limit theorem is based on a ``Banach algebra approach'', which was introduced in \cite{Eh-SzW} and used also in \cite{ERS}. The main goal of the approach is to reduce the asymptotics of determinants to the asymptotics of traces of certain operators.

The operators $A$ that belong to these classes of Banach algebras can be considered as ``symbols'' for the corresponding finite sections $A_{n_1,n_2}=P_{n_1,n_2} A P_{n_1,n_2}$. Moreover, these operators also serve as ``symbols''
for corresponding compression operators on $l^2(\Zp)$, quite analogous to the classical case of symbols of Toeplitz operators.

The main property that such Banach algebras have to possess is \emph{suitability} (in the sense of \cite{Eh-SzW} or  \cite{ERS}). In addition, the notions of \emph{shift-invariance} and  \emph{rigidity} are needed, 
and the notion of  {\em fractal sequences} will occur again.

Let $\mathcal{L}(H)$ denote the Banach algebra of all bounded linear operators on a Hilbert space $H$.
Furthermore, let $P, J$ stand for the following operators on $l^2(\Z)$,
    \begin{align*}
        P&:\enskip (x_n)_{n\in \Z}\mapsto (y_n)_{n\in \Z}\enskip \mathrm{with}\enskip y_n =\begin{cases}
        x_n& \mathrm{if}\enskip n\ge 0\\
        0  &   \mathrm{if}\enskip n<0,
        \end{cases}\\
        J&:\enskip (x_n)_{n\in \Z} \mapsto (x_{-n-1})_{n\in \Z}.
    \end{align*}
%and let $Q = I- P$. 
For each operator $A\in \mathcal{L}(l^2(\Z))$, define
    \[T(A):=PAP, \quad H(A):=PAJP, \quad \widetilde{A} := JAJ.\]
Identifying the image of $P$ with $l^2(\Zp)$, we will consider $T(A)$ and $H(A)$ as operators acting on $l^2(\Zp)$. 
The notation above is inspired by the classical notation for Toeplitz and Hankel operators.
In fact, the following simple, but important identities hold for any $A,B\in \mathcal{L}(l^2(\Z))$,
    \begin{align}\label{T-id}
        T(AB) &= T(A)T(B)+H(A)H(\widetilde{B}),\\
        H(AB) &= T(A)H(B)+H(A)T(\widetilde{B}),
    \end{align}
generalizing the  classical identities for Toeplitz and Hankel operators.

\bigskip
{\bf Rigidity and suitability.}\ 
A set $\mathcal{R}$ of bounded linear operators on $l^2(\Z)$ is called \emph{rigid} if for each $A\in \mathcal{R}$ the following statement holds:
    \[\text{If}\enskip T(A) \enskip \text{or} \enskip T(\widetilde{A}) \text{ is compact, then }A =0.\]
The notion is modelled after the corresponding property for Toeplitz operators.
It was proved in  \cite[Thm.~3.1]{ERS} that the class $\mathcal{OAP}$ is rigid.

\medskip
A Banach algebra $\mathcal{R}$ of bounded linear operators on $l^2(\Z)$ will be called \emph{suitable} if the following conditions hold:\hfill
\begin{enumerate}[label=(\alph*)]
\item $\mathcal{R}$ is continuously embedded into $\mathcal{L}(l^2(\Z))$ and
    \[\|A\|_{\mathcal{L}(l^2(\Z))} \le \|A\|_\mathcal{R}\text{  for all }A\in \mathcal{R}.\]
\item
For all $A,B\in \mathcal{R}$, the operators
$H(A)H(\widetilde{B})$ and $H(\widetilde{A})H(B)$ are trace class, and there exists $M>0$ such that 
    	\begin{equation}\label{suitable}
		\max\{\|H(A)H(\widetilde{B})\|_{\mathcal{C}_1}, \|H(\widetilde{A})H(B)\|_{\mathcal{C}_1}\} \le M\|A\|_\mathcal{R}\|B\|_\mathcal{R}\text{   for all }A,B\in \mathcal{R}.
	\end{equation}
\end{enumerate}
Therein, for $1\le p<\infty$, let $\mathcal{C}_p(H)$ stand for the Schatten-von Neumann class of operators on a Hilbert space $H$, i.e., the set of all (compact) operators $A\in \mathcal{L}(H)$ for which
    \[ \|A\|_{\mathcal{C}_p}:= \left( \sum_{n\ge 0}s_n(A)^p \right) ^{1/p}<\infty, \]
where $s_n(A)$ refers to the $n$-th singular value of $A$. The operators belonging to $\mathcal{C}_1$ are 
called trace class operators. We refer to \cite{gohberg2013classes} for more information about these concepts.

\medskip

For sake of illustration, let us present an example of a class of suitable Banach algebras
(see also \cite[Example 3.2 and Prop.~3.3]{ERS}). 

\begin{example}\label{ex3.1}
For $p,q\ge 1$, define
\[
\mathcal{R}_{p,q} :=
            \left\{\, 
            A\in \mathcal{L}(l^2(\Z)): H(A)\in \mathcal{C}_p(l^2(\Zp)), \enskip H(\widetilde{A})\in \mathcal{C}_q(l^2(\Zp)) 
            \,\right\}
\]
along with a norm
    \[\|A\|_{\mathcal{R}_{p,q}}:= \|A\|_{\mathcal{L}(l^2(\Z))} + \|H(A)\|_{\mathcal{C}_p} + \|H(\widetilde{A})\|_{\mathcal{C}_q}.\]
With the above norm, $\mathcal{R}_{p,q}$ is a Banach algebra, and it is suitable if, in addition, $1/p+1/q = 1$.
\end{example}

%%%%%%%%%%%
\bigskip
{\bf Shift-invariance and fractal sequences.}\
Finally, let us define the notions of shift-invariance and fractal sequences for Banach algebras of operators on $l^2(\Z)$. A Banach algebra $\mathcal{R}$ of bounded linear operators on $l^2(\Z)$ is said to be \emph{shift-invariant} if
    \begin{equation}
        U_{-n}AU_n\in \mathcal{R}\enskip\text{   and   } \enskip\|U_{-n}AU_n\|_\mathcal{R} = \|A\|_\mathcal{R}
    \end{equation}
for each $A\in \mathcal{R}$ and $n\in \Z$. Occasionally, we will also use the notation 
\begin{equation}\label{Un}
        \mathcal{U}^{n}:A\in \mathcal{R}\mapsto U_{-n}AU_{n} \in \mathcal{R},
\end{equation}
noting that $\cU^n$ is an isometric Banach algebra isomorphism on $\cR$.

Assuming $\mathcal{R}$ to be shift-invariant, we call a sequence $h:\Zp\to \Z$ to be \emph{fractal} for $\mathcal{R}$ if for each $A\in\cR$ there exists an element $\mathcal{U}A\in \cR$ such that 
    \begin{equation}\label{eqn for fractal R}
        \lim_{n\to \infty} \|U_{-h(n)}AU_{h(n)} - \mathcal{U}A\|_\mathcal{R} = 0.
    \end{equation}
It is easy to see that in this case $\mathcal{U}:\cR\to\cR$ is an isometric Banach algebras homomorphism, which is unital
if $\cR$ is unital. We will also say that the sequence $h:\Zp\to\Z$ is {\em fractal with associated Banach algebra homomorphism} 
$\mathcal{U}$ on $\cR$.

%%%%%%%%%%
\bigskip
{\bf The Banach algebras $\boldsymbol{\mathcal{W}_{\alpha_1, \alpha_2}(\mathcal{A})}$.} 
\ 
Let $\mathcal{A}$ be a shift-invariant and continuously embedded Banach subalgebra of $l^\infty(\Z)$. 
We are going to introduce corresponding Banach algebras $\mathcal{W}_{\alpha_1, \alpha_2}(\mathcal{A})$ of operators on $l^2(\Z)$ as follows. For given $\alpha_1, \alpha_2\ge 0$, define the weight function $\alpha$ on $\Z$ by
    \begin{equation}\label{alpha.k}
    \alpha(k) = \begin{cases}
                    (1+k)^{\alpha_1} &\mathrm{ if }\enskip k\ge 0, \\[.5ex]
                    (1+|k|)^{\alpha_2} &\mathrm{ if }\enskip k<0.
                  \end{cases}
            \end{equation}
Now let $\mathcal{W}_{\alpha_1, \alpha_2}(\mathcal{A})$ refer to the set of all operators $A\in \mathcal{L}(l^2(\Z))$ for which $D_k(A)\in \mathcal{A}$ for each $k\in \Z$ and
    \[\|A\|_{\mathcal{W}_{\alpha_1, \alpha_2}(\mathcal{A})} := \sum_{k\in \Z}\alpha(k)\|D_k(A)\|_\mathcal{A} <\infty.\]
Recalling \eqref{def.D(A)} note that $D_k(A)$ stands for the $k$-th diagonal of 
$A$, a sequence in $l^\infty(\Z)$. 
It is easy to see that $A\in\mathcal{W}_{\alpha_1, \alpha_2}(\mathcal{A})$ if and only if it
can be written as
    \begin{equation}\label{A-series}
    A = \sum_{k\in \Z}(a^{(k)}I)U_k
    \end{equation}
with $a^{(k)}\in\cA$ ($k\in\Z$) such that 
$$
\|A\|_{\mathcal{W}_{\alpha_1, \alpha_2}(\cA)}=\sum_{k\in\Z} \alpha(k) \| a^{(k)}\|_{\cA} <\infty.
$$
Note that  \eqref{A-series} converges absolutely both in the norm of $\mathcal{W}_{\alpha_1,\alpha_2}(\cA)$ and in the operator norm.  
Clearly, $\mathcal{W}_{\alpha_1,\alpha_2}(\cA)$ is a Banach space.
In fact, we have the following results.

\begin{prop}\label{WA main}
Let $\alpha_1,\alpha_2 \ge 0$, and let $\mathcal{A}$ be a shift-invariant and continuously embedded Banach subalgebra of $l^\infty(\Z)$. Then $\mathcal{W}_{\alpha_1, \alpha_2}(\mathcal{A})$ is a shift-invariant and continuously embedded Banach subalgebra of $\mathcal{L}(l^2(\Z))$. Moreover, \hfill
\begin{enumerate}[label=(\alph*)]
\item 
if $\alpha_1,\alpha_2 \ge0$, $\alpha_1 + \alpha_2 = 1$, and if $\|a\|_{l^\iy(\Z)}\le \|a\|_{\cA}$ for all $a\in \cA$,
 then $\mathcal{W}_{\alpha_1, \alpha_2}(\mathcal{A})$ is suitable, 
\item if $\mathcal{A}\subseteq AP(\Z)$, then $\mathcal{W}_{\alpha_1, \alpha_2}(\mathcal{A})$ is rigid.
\end{enumerate}
\end{prop}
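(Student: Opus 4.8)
The plan is to treat the three assertions in turn, establishing first the Banach-algebra structure, then suitability (part (a)), and finally rigidity (part (b)), which turns out to be immediate. Throughout I write a generic element as $A=\sum_k (a^{(k)}I)U_k$ with $a^{(k)}=D_k(A)\in\mathcal{A}$, and I take completeness of $\mathcal{W}_{\alpha_1,\alpha_2}(\mathcal{A})$ as already granted in the text.

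For the Banach-algebra claim, I would first record the product formula. Using $U_k(bI)=(U_kb)I\,U_k$, if $B=\sum_l(b^{(l)}I)U_l$ then (by absolute convergence in operator norm) $AB=\sum_m(c^{(m)}I)U_m$ with $c^{(m)}=\sum_k a^{(k)}\,(U_k b^{(m-k)})$. Submultiplicativity of $\|\cdot\|_{\mathcal{A}}$ together with its shift-invariance gives $\|c^{(m)}\|_{\mathcal{A}}\le\sum_k\|a^{(k)}\|_{\mathcal{A}}\|b^{(m-k)}\|_{\mathcal{A}}$, so the entire estimate reduces to submultiplicativity of the weight, namely $\alpha(m)\le\alpha(k)\alpha(m-k)$ for all $k,m\in\Z$. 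I would verify this by a short case analysis on the signs of $k$, $m-k$, $m$, using $1+|x+y|\le(1+|x|)(1+|y|)$ and the monotonicity of $t\mapsto(1+t)^{\alpha_i}$; the mixed-sign cases are the only slightly delicate ones, where one of the two factors is $\ge1$ and absorbs the other. This yields $\|AB\|_{\mathcal{W}}\le\|A\|_{\mathcal{W}}\|B\|_{\mathcal{W}}$. Continuous embedding into $\mathcal{L}(l^2(\Z))$ follows from $\alpha(k)\ge1$ and the embedding $\mathcal{A}\hookrightarrow l^\infty(\Z)$, and shift-invariance follows from $U_{-n}(a^{(k)}I)U_k U_n=((U_{-n}a^{(k)})I)\,U_k$ (shifts commute) together with the shift-invariance of $\mathcal{A}$.

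For suitability (a) the key is to compute the Hankel operators of the building blocks. Using $(H(C))_{ij}=D_{i+j+1}(C)(i)$ for $i,j\ge0$, I find that $H((aI)U_k)$ is supported on the single antidiagonal $i+j=k-1$ (hence vanishes unless $k\ge1$), acts as $e_j\mapsto a(k-1-j)e_{k-1-j}$, and is therefore finite rank with singular values exactly $|a(0)|,\dots,|a(k-1)|$. Likewise $\widetilde{(aI)U_k}=((\rho a)I)U_{-k}$ with $(\rho a)(j)=a(-j-1)$, so $H(\widetilde{(aI)U_k})$ is supported on $i+j=-k-1$ with singular values $|a(-1)|,\dots,|a(k)|$ (nonzero only for $k\le-1$). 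Summing over diagonals and setting $p=1/\alpha_1$, $q=1/\alpha_2$, I get $\|H(A)\|_{\mathcal{C}_p}\le\sum_{k\ge1}k^{\alpha_1}\|a^{(k)}\|_{l^\infty(\Z)}$ and $\|H(\widetilde{A})\|_{\mathcal{C}_q}\le\sum_{k\le-1}|k|^{\alpha_2}\|a^{(k)}\|_{l^\infty(\Z)}$; since $k^{\alpha_1}\le\alpha(k)$, $|k|^{\alpha_2}\le\alpha(k)$, and $\|a^{(k)}\|_{l^\infty(\Z)}\le\|a^{(k)}\|_{\mathcal{A}}$ (the extra hypothesis of (a)), both are $\le\|A\|_{\mathcal{W}}$. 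The relation $\alpha_1+\alpha_2=1$ is precisely $1/p+1/q=1$, so the H\"older inequality $\|XY\|_{\mathcal{C}_1}\le\|X\|_{\mathcal{C}_p}\|Y\|_{\mathcal{C}_q}$ gives $\|H(A)H(\widetilde{B})\|_{\mathcal{C}_1}\le\|A\|_{\mathcal{W}}\|B\|_{\mathcal{W}}$ and, symmetrically, $\|H(\widetilde{A})H(B)\|_{\mathcal{C}_1}\le\|A\|_{\mathcal{W}}\|B\|_{\mathcal{W}}$. I would note that this direct argument also covers the endpoint cases $\alpha_1=0$ or $\alpha_2=0$ (reading $\mathcal{C}_\infty$ as the operator norm), which a reduction to Example \ref{ex3.1} would not.

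Part (b) is then immediate: when $\mathcal{A}\subseteq AP(\Z)$, every $A\in\mathcal{W}_{\alpha_1,\alpha_2}(\mathcal{A})$ has all diagonals $D_k(A)\in\mathcal{A}\subseteq AP(\Z)$, hence $A\in\mathcal{OAP}$; since rigidity is a pointwise property of operators and $\mathcal{OAP}$ is rigid by \cite[Thm.~3.1]{ERS}, the subset $\mathcal{W}_{\alpha_1,\alpha_2}(\mathcal{A})$ inherits it. I expect the suitability estimate in (a) to be the main obstacle: identifying the precise finite-rank/antidiagonal structure of the Hankel blocks and their singular values, and checking that the Schatten exponents pair up as $1/p+1/q=\alpha_1+\alpha_2=1$. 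The weight submultiplicativity needed in the first paragraph is routine but must be handled carefully in the mixed-sign cases.
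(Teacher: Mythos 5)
Your proof is correct and follows essentially the same route the paper takes by citing \cite[Thm.~3.5]{ERS}: the Banach algebra structure via submultiplicativity of the weight $\alpha$, suitability via embedding $\mathcal{W}_{\alpha_1,\alpha_2}(\mathcal{A})$ into the class $\mathcal{R}_{p,q}$ of Example~\ref{ex3.1} (your antidiagonal and singular-value computation for $H((aI)U_k)$ is exactly this embedding) followed by the Schatten--H\"older inequality, and part (b) inherited from the rigid class $\mathcal{OAP}$, just as the paper indicates. Two small remarks: your exponent pairing $H(A)\in\mathcal{C}_{1/\alpha_1}$, $H(\widetilde{A})\in\mathcal{C}_{1/\alpha_2}$ is the natural one under the paper's conventions, since the entries of $H(A)$ come from the diagonals $D_n(A)$ with $n\ge 1$, which carry the weight $(1+n)^{\alpha_1}$; the paper's parenthetical remark states $p=1/\alpha_2$, $q=1/\alpha_1$, which appears to have the roles swapped, but the discrepancy is immaterial because suitability of $\mathcal{R}_{p,q}$ only requires $1/p+1/q=1$. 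Your direct argument also covers the endpoint cases $\alpha_1=0$ or $\alpha_2=0$ (with $\mathcal{C}_\infty$ read as the operator norm), consistent with the paper's observation that these cases are not explicit in \cite{ERS} but follow along the same lines.
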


For a detailed proof we refer to  \cite[Thm.~3.5]{ERS}. The cases of $\alpha_1=0$ or $\alpha_2=0$ in (a) were not explicitly stated there, but the proof proceeds along the same lines.
To eleborate a little bit more, let us remark that the proof of (a) is based on the fact that  $\mathcal{W}_{\alpha_1, \alpha_2}(\mathcal{A})$ 
is continuously embedded in $\cR_{p,q}$ as defined in Example \ref{ex3.1} with $p=1/\alpha_2$ and $q=1/\alpha_1$.
Part (b) follows from the fact that the class $\mathcal{OAP}$ is rigid. This was proved in \cite[Thm.~3.1]{ERS}
and uses the property that for each $a\in AP(\Z)$ there exists a strictly increasing integer sequence $k:\Zp\to\Z$
such that $U_{-k(n)}a\to a$ in $l^\infty(\Z)$  as $n\to\infty$.

\medskip
Concerning the notion of fractal sequences the following result holds.

\begin{prop}\label{pro eqn for fractal R}
Let $\alpha_1,\alpha_2 \ge 0$, and let $\mathcal{A}$ be a shift-invariant Banach subalgebra of $l^\infty(\Z)$. Then
\begin{itemize}
\item[(a)]
a sequence $h:\Zp\to\Z$ is fractal for $\mathcal{R} = \mathcal{W}_{\alpha_1, \alpha_2}(\mathcal{A})$ if and only if $h$ is fractal for $\mathcal{A}$.
\item[(b)]
If this is true, and if $h$ is fractal for $\mathcal{A}$ with associated $U:\cA\to\cA$, then $h$ is fractal for $\mathcal{R}$ with the associated $\mathcal{U}$ given by
\begin{equation}\label{frac for R}
\mathcal{U}:  \sum_{k\in\Z} (a^{(k)}I)U_k
\;\;\mapsto\;\; \sum_{k\in\Z} (U(a^{(k)})I)U_k. 
\end{equation}
Therein, the operator is defined for finite linear combinations and extends by continuity to all of $\mathcal{R}$.
\end{itemize}
\end{prop}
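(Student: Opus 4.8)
The plan is to reduce everything to the sequence algebra $\cA$ by exploiting the fact that conjugation by the shift acts diagonal-wise, and then to handle the passage to the infinite diagonal sum by a dominated-convergence argument for series. The key algebraic fact I would record first is that, writing $A=\sum_{k\in\Z}(a^{(k)}I)U_k$ with $a^{(k)}=D_k(A)\in\cA$ as in \eqref{A-series}, a direct computation on $l^2(\Z)$ gives $U_{-n}(aI)U_n=(U_{-n}a)I$ for $a\in l^\infty(\Z)$ and $U_{-n}U_kU_n=U_k$, whence
\[
\cU^{n}A = U_{-n}AU_n = \sum_{k\in\Z}\bigl((U_{-n}a^{(k)})I\bigr)U_k .
\]
That is, conjugating $A$ by $U_n$ amounts to applying the sequence shift $U_{-n}$ to each diagonal. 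Combining this with the definition of the norm of $\mathcal{W}_{\alpha_1,\alpha_2}(\cA)$, I get, for any $B\in\cR$,
\[
\|U_{-h(n)}AU_{h(n)}-B\|_{\cR}=\sum_{k\in\Z}\alpha(k)\,\bigl\|U_{-h(n)}a^{(k)}-D_k(B)\bigr\|_{\cA},
\]
which is the engine for both parts.

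To prove the implication ``fractal for $\cA$ $\Rightarrow$ fractal for $\cR$'' together with (b), I would assume $h$ is fractal for $\cA$ with associated isometric homomorphism $U$ and define $\cU A$ by \eqref{frac for R}. Since $U$ is isometric, $\sum_k\alpha(k)\|Ua^{(k)}\|_{\cA}=\sum_k\alpha(k)\|a^{(k)}\|_{\cA}=\|A\|_{\cR}<\iy$, so the series defining $\cU A$ converges absolutely in $\cR$; hence $\cU A\in\cR$, the ``extension by continuity'' is automatic, and $\cU$ is isometric. Taking $B=\cU A$ in the displayed identity yields
\[
\|U_{-h(n)}AU_{h(n)}-\cU A\|_{\cR}=\sum_{k\in\Z}\alpha(k)\,\bigl\|U_{-h(n)}a^{(k)}-Ua^{(k)}\bigr\|_{\cA}.
\]
For each fixed $k$ the summand tends to $0$ because $h$ is fractal for $\cA$, and it is dominated by $2\alpha(k)\|a^{(k)}\|_{\cA}$ (using that both $U_{-h(n)}$ and $U$ are isometric on $\cA$), a summable bound with sum $2\|A\|_{\cR}$ independent of $n$. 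Tannery's theorem (dominated convergence for series, exactly as used in the proof of Theorem \ref{thm trace eva}) then forces the total sum to $0$.

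For the converse in (a), I would test fractality of $\cR$ on diagonal operators. Given $a\in\cA$, the operator $A=aI$ lies in $\cR$ with $\|A\|_{\cR}=\|a\|_{\cA}$ (since $\alpha(0)=1$), and $U_{-h(n)}AU_{h(n)}=(U_{-h(n)}a)I$. If $h$ is fractal for $\cR$ there is $\cU A\in\cR$ with $U_{-h(n)}AU_{h(n)}\to\cU A$ in $\cR$; applying the main-diagonal map $D_0:\cR\to\cA$, which is a contraction because $\|D_0(B)\|_{\cA}\le\|B\|_{\cR}$, gives $U_{-h(n)}a\to D_0(\cU A)=:Ua$ in $\cA$. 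Thus $h$ is fractal for $\cA$, which closes the equivalence in (a).

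The main obstacle is the interchange of the limit with the infinite diagonal sum in the second step: pointwise convergence of each diagonal does not by itself yield $\cR$-norm convergence. The resolution is precisely the uniform, summable domination $2\alpha(k)\|a^{(k)}\|_{\cA}$, which is what makes $A$ lie in $\cR$ in the first place and lets dominated convergence apply; everything else is the routine bookkeeping of the shift action on diagonals recorded in the first paragraph.
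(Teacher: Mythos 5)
Your proof is correct and follows essentially the same route as the paper's: both arguments rest on the diagonal-wise action of shift conjugation, test the converse direction on multiplication operators $aI$ (your use of the contraction $D_0$ makes explicit what the paper asserts when it writes $\mathcal{U}(aI)=U(a)I$), and obtain the forward direction plus (b) by passing from the diagonal data to the full series. The only difference is one of bookkeeping: you define $\mathcal{U}$ on all of $\mathcal{R}$ at once and justify the limit--sum interchange by dominated convergence on the exact norm identity, whereas the paper defines $\mathcal{U}$ on finite diagonal sums and extends by continuity using that the conjugation operators have norm one; these are interchangeable implementations of the same idea.
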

\begin{proof}
To show the `only if' part of (a), assume that $h$ is fractal for $\mathcal{R}$ with associated $\mathcal{U}$. 
Considering the special case $A = aI$ with $a\in \mathcal{A}$ we have
    \[\|U_{-h(n)}AU_{h(n)}- \mathcal{U}A\|_{\mathcal{R}} = \|(U_{-h(n)}a)I - \mathcal{U}(aI))\|_{\mathcal{R}}\to 0\]
as $n\to\infty$. Therefore, $\mathcal{U}(aI) = U(a)I$ for some $U(a)\in \mathcal{A}$, and $\|U_{-h(n)}a - U(a)\|_\mathcal{A}\to 0$ as $n\to \infty$. Hence, if we identify $\mathcal{A}$ with $\{aI:a\in\mathcal{A}\}$, then $U:\mathcal{A}\to \mathcal{A}$ is the restriction map of $\mathcal{U}$ onto $\mathcal{A}$ and (\ref{eqn for fractal}) is satisfied.
Therefore, $h$ is fractal for $\mathcal{A}$.

Let us now show (b) and the `if' part of (a).
Assume that $h$ is fractal for $\mathcal{A}$ with associated $U$.  We are going to show that $h$ is fractal for $\mathcal{R}$ with the associated $\mathcal{U}$ given above. Firstly consider those $A\in \mathcal{R}$ which can be written as a finite linear combination $A = \sum_k (a^{(k)}I)U_k$, $a^{(k)}\in \mathcal{A}$. 
Observing that 
$$
U_{-h(n)}A U_{h(n)} = \sum_{k} ((U_{-h(n)}a^{(k)})I)U_k
\;\; \to\;\; 
\sum_k (U(a^{(k)})I)U_k  = \mathcal{U}A
$$
as $n\to\infty$, where the convergence is in the norm of $\mathcal{R}$. 
In other words, we have strong convergence of the bounded linear operators $\mathcal{U}^{h(n)}$ defined by
$A\in \mathcal{R}\mapsto U_{-h(n)}AU_{h(n)} \in \mathcal{R}$ on the dense subset of $\mathcal{R}$. In addition, the norm of $U^{h(n)}$ equals one because of shift-invariance, and we have strong convergence of $\mathcal{U}^{h(n)}$ to $\mathcal{U}$ on all of $\mathcal{R}$ followed by a standard approximation argument. It follows that the operator $\mathcal{U}$ extends by continuity to all of $\mathcal{R}$ and $\|\mathcal{U}A \|_\mathcal{R} = \|A\|_{\mathcal{R}}$ for all $A\in \mathcal{R}$. 
\end{proof}

Besides establishing an abstract version of a Szeg\"{o}-Widom limit theorem in Section \ref{sec:5}, we will specialize it to
the Banach algebras $\mathcal{R} = \mathcal{W}_{\alpha_1, \alpha_2}(\mathcal{A})$ with $\mathcal{A} = APW(\Z, \Xi, \beta)$, where $\Xi$ and $\beta$ have to satisfy certain conditions. Apart from Theorem \ref{thm trace eva}, we
will only need the following summary of statements resulting from  Propositions \ref{prop fractal A}, \ref{p2.2},
\ref{WA main}, and \ref{pro eqn for fractal R}.

\begin{cor}\label{c3.4}
Let $\beta$ be an admissible weight on an additive subgroup $\Xi$ of $\R/\Z$, and let $\alpha_1,\alpha_2\ge0$ such that 
$\alpha_1+\alpha_2=1$. 
\begin{itemize}
\item[(i)]
$\cR= \mathcal{W}_{\alpha_1, \alpha_2}(\mathcal{A})$ with $\mathcal{A} = APW(\Z, \Xi, \beta)$
is a rigid, suitable, shift-invariant and unital Banach algebra of bounded linear operators on $l^2(\Z)$.
\item[(ii)]
If $h:\Zp\to\Z$ is a fractal sequence for $\Xi$ with associated 
$\tau\in \HomXT$, then $h$ is fractal for $\cR$ with associated $\cU$ given by
\begin{equation}\label{mapU}
\cU:\sum_{n\in\Z} \sum_{\xi\in \Xi} a^{(n)}_\xi (e_\xi I)U_n \mapsto
\sum_{n\in\Z} \sum_{\xi\in \Xi} a^{(n)}_\xi \tau(\xi) (e_\xi I)U_n.
\end{equation}
\end{itemize}
\end{cor}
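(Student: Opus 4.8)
The plan is to obtain the corollary purely by assembling the four propositions that immediately precede it, specialized to $\cA = APW(\Z,\Xi,\beta)$; by design nothing genuinely new is proved here. The only real work is to confirm that every hypothesis needed to invoke those propositions is met, and then to substitute the explicit form of the relevant map. Accordingly I would organize the argument around two chains of citations, one for each part.

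For part (i), I would first invoke Proposition \ref{p2.2} to get that $\cA = APW(\Z,\Xi,\beta)$ is a shift-invariant, continuously embedded Banach subalgebra of $AP(\Z)$ in which the linear span of $\{e_\xi : \xi\in\Xi\}$ is dense; note in passing that $0\in\Xi$ since $\Xi$ is a group, so $e_0=1\in\cA$. Feeding this into Proposition \ref{WA main} gives at once that $\cR=\mathcal{W}_{\alpha_1,\alpha_2}(\cA)$ is a shift-invariant, continuously embedded Banach subalgebra of $\mathcal{L}(l^2(\Z))$, and, using $\cA\subseteq AP(\Z)$, its rigidity via part (b). To apply part (a) for suitability I must supply the hypothesis $\alpha_1+\alpha_2=1$ (given) together with the norm domination $\|a\|_{l^\infty(\Z)}\le\|a\|_{\cA}$. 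This last point is the only step deserving a short verification: admissibility \eqref{weight.adm} forces $\beta(\xi)\ge 1$ for every $\xi\in\Xi$, whence for $a=\sum_\xi a_\xi e_\xi$ one has $\|a\|_{l^\infty(\Z)} = \sup_k \bigl|\sum_\xi a_\xi e^{2\pi i k\xi}\bigr| \le \sum_\xi |a_\xi| \le \sum_\xi \beta(\xi)|a_\xi| = \|a\|_{\Xi,\beta}$. Finally, unitality holds because $e_0=1\in\cA$ makes the identity $I=(e_0 I)U_0$ an element of $\cR$.

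For part (ii), I would chain Proposition \ref{prop fractal A} with Proposition \ref{pro eqn for fractal R}. Since $h$ is fractal for $\Xi$ with associated $\tau$, and since by part (i) the algebra $\cA$ satisfies the density hypothesis required by Proposition \ref{prop fractal A}, that proposition yields that $h$ is fractal for $\cA$ with associated $U:\sum_\xi a_\xi e_\xi \mapsto \sum_\xi a_\xi \tau(\xi) e_\xi$. Proposition \ref{pro eqn for fractal R} then promotes this to fractality of $h$ for $\cR$ with associated $\cU$ acting by \eqref{frac for R}, namely $\sum_n (a^{(n)}I)U_n \mapsto \sum_n (U(a^{(n)})I)U_n$. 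Expanding each diagonal as $a^{(n)}=\sum_\xi a^{(n)}_\xi e_\xi$ and applying $U$ diagonal-by-diagonal turns the right-hand side into $\sum_n\sum_\xi a^{(n)}_\xi\tau(\xi)(e_\xi I)U_n$, which is exactly formula \eqref{mapU}.

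Because the entire argument reduces to citing earlier results, there is no substantial obstacle of the usual kind; the only genuine verification is the inequality $\|a\|_{l^\infty(\Z)}\le\|a\|_{\cA}$ feeding the suitability claim, and the only bookkeeping care needed is to match the abstract map \eqref{frac for R} with the explicit $U$ so as to recover \eqref{mapU}. Everything else is a matter of confirming that the hypotheses (admissibility of $\beta$, $\alpha_1+\alpha_2=1$, $\cA\subseteq AP(\Z)$, density of the $e_\xi$, and fractality of $h$ for $\Xi$) line up with the quoted propositions.
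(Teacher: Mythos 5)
Your proposal is correct and matches the paper's intent exactly: the paper gives no separate proof of Corollary \ref{c3.4}, presenting it explicitly as a ``summary of statements resulting from Propositions \ref{prop fractal A}, \ref{p2.2}, \ref{WA main}, and \ref{pro eqn for fractal R},'' which is precisely the chain of citations you assemble. Your added verifications (that admissibility forces $\beta(\xi)\ge 1$, hence $\|a\|_{l^\infty(\Z)}\le\|a\|_{\cA}$, and that $e_0=1\in\cA$ gives unitality) are exactly the small checks the paper leaves implicit.
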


%%%%%%%%%%%%%%%%%%%%%%%%%%%%%%%%%%%%%%%%%%%%%%%%%%%%%%%%%%%%%%%%%%%%%%%%%%%%%%%%%%%%%%%%%%%%%%%%%%%%%%%%%%%%%%%%%%%%%%%%%%%%%%%%%%%%%%%%%%%%%%%%%%%%%%%%%%%%%%%%
\section{\Large\bf Banach algebras associated with suitable Banach \\  algebras}
\label{sec:4}

In this section, we will continue to prepare the proof of the abstract version of the limit theorem in the next section. We construct three types of Banach algebras which are naturally associated with any suitable, rigid, shift-invariant and unital
 Banach algebra $\mathcal{R}$.

%%%%%%%%%%%%%%%%%%%%%%%%%%%%%%%%%%%%%%%%
\subsection{Banach algebras of operators on $\boldsymbol{l^2(\Zp)}$}
\label{sec:4.1}

The first two are the Banach algebras  $\mathcal{O}(\mathcal{R})$ and $\mathcal{O}(\widetilde{\mathcal{R}})$.
They were already considered in  \cite[Sect.~4.1]{ERS}, and it suffices to cite the corresponding result from there.
Recall that $\widetilde{A}=JAJ$.

\begin{prop}
Let $\mathcal{R}$ be a rigid, suitable and unital Banach subalgebra of $\mathcal{L}(l^2(\Z))$.
\begin{itemize}
\item[(a)] 
The set
    \begin{equation}
        \mathcal{O}(\mathcal{R}):=\{T(A) + K:A\in \mathcal{R}, K\in \mathcal{C}_1(l^2(\Zp))\}
    \end{equation}
is a unital Banach algebra with the norm
    \begin{equation}
        \|T(A)+K\|_{\mathcal{O}(\mathcal{R})}:= \|A\|_\mathcal{R} + \|K\|_{\mathcal{C}_1}.
    \end{equation}
\item[(b)] 
The set
    \begin{equation}
        \mathcal{O}(\widetilde{\mathcal{R}}) := \{T(\widetilde{A})+K:A\in \mathcal{R}, K\in \mathcal{C}_1(l^2(\Zp))\}
    \end{equation}
is a unital Banach algebra with the norm
    \begin{equation}
       \|T(\widetilde{A})+K\|_{\mathcal{O}(\widetilde{\mathcal{R}})}: = \|A\|_\mathcal{R} + \|K\|_{\mathcal{C}_1}.
    \end{equation}
\end{itemize}
\end{prop}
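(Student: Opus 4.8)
The plan is to realize $\mathcal{O}(\cR)$ as the image of the linear map
\[
\Phi:\cR\oplus\mathcal{C}_1(l^2(\Zp))\to\mathcal{L}(l^2(\Zp)),\qquad (A,K)\mapsto T(A)+K ,
\]
and to transport the Banach-space and algebra structure through $\Phi$. The first thing I would check is that $\Phi$ is \emph{injective}, since this is precisely what makes the proposed norm well defined: if $T(A)+K=0$ with $A\in\cR$ and $K\in\mathcal{C}_1$, then $T(A)=-K$ is trace class, hence compact, and \emph{rigidity} of $\cR$ forces $A=0$ and then $K=0$. Consequently every element of $\mathcal{O}(\cR)$ has a unique representation $T(A)+K$, the quantity $\|A\|_{\cR}+\|K\|_{\mathcal{C}_1}$ is unambiguous, and $\Phi$ becomes an isometric isomorphism of Banach spaces from the $\ell^1$-direct sum $\cR\oplus_1\mathcal{C}_1(l^2(\Zp))$ onto $\mathcal{O}(\cR)$. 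Since $\cR$ and $\mathcal{C}_1(l^2(\Zp))$ are complete, so is the direct sum, and \emph{completeness} of $\mathcal{O}(\cR)$ follows at once.

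Next I would verify closure under multiplication and submultiplicativity of the norm. Expanding
\[
(T(A)+K)(T(B)+L)=T(A)T(B)+T(A)L+KT(B)+KL
\]
and inserting \eqref{T-id} in the form $T(A)T(B)=T(AB)-H(A)H(\widetilde{B})$, the product equals $T(AB)+K''$ with
\[
K''=-H(A)H(\widetilde{B})+T(A)L+KT(B)+KL .
\]
Here $AB\in\cR$ because $\cR$ is an algebra, and $K''\in\mathcal{C}_1$: the first summand is trace class by \emph{suitability} \eqref{suitable}, while the other three are products of a bounded operator with a trace-class operator. Estimating each term — using $\|T(A)\|_{\mathcal{L}}=\|PAP\|\le\|A\|_{\mathcal{L}(l^2(\Z))}\le\|A\|_{\cR}$ from suitability (a), the bound $M\|A\|_{\cR}\|B\|_{\cR}$ from \eqref{suitable}, and $\|L\|_{\mathcal{L}}\le\|L\|_{\mathcal{C}_1}$ — gives $\|(T(A)+K)(T(B)+L)\|_{\mathcal{O}(\cR)}\le(1+M)\,\|T(A)+K\|_{\mathcal{O}(\cR)}\,\|T(B)+L\|_{\mathcal{O}(\cR)}$. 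Replacing the norm by the equivalent one $(1+M)\|\cdot\|_{\mathcal{O}(\cR)}$ makes it genuinely submultiplicative, so $\mathcal{O}(\cR)$ is a Banach algebra. It is \emph{unital} because $\cR$ is: the identity $I\in\cR$ yields $T(I)=P$, which is the identity on $l^2(\Zp)$ and hence the unit of $\mathcal{O}(\cR)$.

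For part (b) I would not argue from scratch but deduce it from (a) via the symmetry $A\mapsto\widetilde{A}=JAJ$. Since $J^2=I$ and $\widetilde{AB}=\widetilde{A}\,\widetilde{B}$, the set $\widetilde{\cR}=\{\widetilde{A}:A\in\cR\}$ equipped with $\|\widetilde{A}\|_{\widetilde{\cR}}:=\|A\|_{\cR}$ is again a unital Banach algebra; moreover it is rigid and suitable, precisely because the defining condition of rigidity and the estimate \eqref{suitable} for $\cR$ are symmetric under interchanging the roles of $A$ and $\widetilde{A}$ (recall $\widetilde{\widetilde{A}}=A$). Applying part (a) to $\widetilde{\cR}$ then produces part (b) verbatim.

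The step I expect to carry the real content is the injectivity of $\Phi$, i.e. uniqueness of the decomposition $T(A)+K$; this is exactly where rigidity is indispensable, for without it the ``symbol'' $A$ could not be recovered from $T(A)+K$ and the norm would be ill defined. Everything else is routine bookkeeping resting on the trace-class identity \eqref{T-id} and the suitability estimate \eqref{suitable}.
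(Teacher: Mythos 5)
Your proof is correct. Note that the paper itself gives no argument for this proposition: it is quoted from \cite[Sect.~4.1]{ERS}, so there is nothing in the text to compare against line by line. Your argument is, however, exactly the one this kind of statement calls for, and it matches the architecture that the cited reference (and the rest of Section \ref{sec:4}, e.g.\ the proof of Theorem \ref{thm:4.4}) relies on: rigidity is what makes the decomposition $T(A)+K$ unique, hence the norm well defined and $\mathcal{O}(\mathcal{R})$ isometric to the complete space $\mathcal{R}\oplus_1\mathcal{C}_1(l^2(\Zp))$; the identity \eqref{T-id} together with the suitability estimate \eqref{suitable} gives closure under multiplication and the bound $\|XY\|\le(1+M)\|X\|\,\|Y\|$, after which the standard renorming remark disposes of strict submultiplicativity. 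Two small points in your favor: you correctly isolate injectivity of $(A,K)\mapsto T(A)+K$ as the step where rigidity is indispensable, and your derivation of (b) from (a) by passing to $\widetilde{\mathcal{R}}=J\mathcal{R}J$ (checking that rigidity and suitability are symmetric under $A\leftrightarrow\widetilde{A}$, since $\widetilde{\widetilde{A}}=A$) is a clean economy over proving the two parts in parallel.
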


We also need the following two results, which were established in  \cite[Sect.~3.3]{ERS}.

\begin{prop}\label{P plus trace class}
Let $\mathcal{R}$ be a suitable and unital Banach algebra, and let $A_1, \dots, A_r\in \mathcal{R}$. Then the functions
    \begin{align*}
        F_0(\lambda_1,\dots,\lambda_r)&:= T(e^{\lambda_1A_1}\cdots e^{\lambda_rA_r})e^{-\lambda_rT(A_r)}\cdots e^{-\lambda_1T(A_1)} - P,\\
        F_1(\lambda_1,\dots,\lambda_r)&:= T(e^{\lambda_1\widetilde{A_1}}\cdots e^{\lambda_r\widetilde{A_r}})e^{-\lambda_rT(\widetilde{A_r})}\cdots e^{-\lambda_1T(\widetilde{A_1})} - P
    \end{align*}
are analytic with respect to each variable $\lambda_k\in \mathbb{C}$ and take values in $\mathcal{C}_1(l^2(\Zp))$.
\end{prop}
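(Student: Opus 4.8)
The plan is to treat $F_0$ first and obtain $F_1$ by the analogous ``tilde'' computation at the end. The engine of the whole argument is the single algebraic identity \eqref{T-id}, rewritten as the \emph{defect formula}
\[
T(AB)-T(A)T(B)=H(A)H(\widetilde{B}),\qquad A,B\in\mathcal{R},
\]
whose right-hand side lies in $\mathcal{C}_1(l^2(\Zp))$ by suitability \eqref{suitable}. Thus $T$ is multiplicative modulo trace class, and the two sources of ``defect'' in $F_0$—the failure of $T$ to be multiplicative on the product $e^{\lambda_1A_1}\cdots e^{\lambda_rA_r}$, and the difference between $T(e^{\lambda_kA_k})$ and $e^{\lambda_kT(A_k)}$—are exactly what I would track.

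First I would settle the single-variable case. Writing $G(\lambda):=T(e^{\lambda A})e^{-\lambda T(A)}$ for $A\in\mathcal{R}$, and using that $T$ is bounded linear on $\mathcal{R}$ so that $\frac{d}{d\lambda}T(e^{\lambda A})=T(Ae^{\lambda A})$, the product rule gives
\[
G'(\lambda)=\bigl[T(e^{\lambda A}A)-T(e^{\lambda A})T(A)\bigr]e^{-\lambda T(A)}=H(e^{\lambda A})H(\widetilde{A})\,e^{-\lambda T(A)},
\]
using the defect formula with $B=A$ (and $Ae^{\lambda A}=e^{\lambda A}A$). Since $e^{\lambda A}\in\mathcal{R}$, suitability makes $H(e^{\lambda A})H(\widetilde{A})$ trace class, so $G'(\lambda)\in\mathcal{C}_1$. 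As $G(0)=P$, integrating along the segment from $0$ to $\lambda$ yields $\Phi(\lambda):=G(\lambda)-P=\int_0^\lambda G'(s)\,ds\in\mathcal{C}_1$, the integral being taken as a $\mathcal{C}_1$-valued one.

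For the general case I would telescope. Set $E_k=e^{\lambda_kA_k}$ and $Q_m:=T(E_1\cdots E_m)e^{-\lambda_mT(A_m)}\cdots e^{-\lambda_1T(A_1)}$, so $Q_0=P$ and $F_0=Q_r-P=\sum_{m=1}^r(Q_m-Q_{m-1})$. Applying the defect formula to $T(E_1\cdots E_m)=T\bigl((E_1\cdots E_{m-1})E_m\bigr)$ peels off a trace-class term $H(E_1\cdots E_{m-1})H(\widetilde{E_m})$ (suitability again), while the single-variable step gives $T(E_m)e^{-\lambda_mT(A_m)}=P+\Phi_m$ with $\Phi_m\in\mathcal{C}_1$. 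Substituting and using that $P$ is the identity on $l^2(\Zp)$ (so $T(X)P=T(X)$) collapses the main term to $Q_{m-1}$, leaving $Q_m-Q_{m-1}$ equal to a sum of terms each of which is a trace-class operator sandwiched between bounded operators. Hence $Q_m-Q_{m-1}\in\mathcal{C}_1$ and $F_0\in\mathcal{C}_1$.

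Finally, for analyticity I would observe that every $\mathcal{C}_1$-valued piece produced above is a composition of the entire $\mathcal{R}$-valued map $\lambda_k\mapsto e^{\lambda_kA_k}$ with a bounded (bi)linear map into $\mathcal{C}_1$—namely $B\mapsto H(B)H(\widetilde{A})$ or the integral defining $\Phi$—multiplied by operator-norm-analytic factors $e^{-\lambda_kT(A_k)}$ and $T(E_1\cdots E_{m-1})$. Since $\mathcal{C}_1$ is a two-sided ideal in $\mathcal{L}(l^2(\Zp))$ on which multiplication by bounded operators is continuous, each summand, and therefore $F_0$, is analytic as a $\mathcal{C}_1$-valued function in each $\lambda_k$. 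The statement for $F_1$ follows verbatim after replacing $A_k$ by $\widetilde{A_k}$, noting $\widetilde{e^{\lambda A}}=e^{\lambda\widetilde{A}}$ and that the relevant defect $H(e^{\lambda\widetilde{A}})H(A)=H(\widetilde{e^{\lambda A}})H(A)$ is trace class by the \emph{other} half of \eqref{suitable}; this symmetry of the suitability hypothesis is what makes both cases work. I expect the main obstacle to be the bookkeeping in the telescoping step—keeping straight which factors are trace class and which are merely bounded so that every correction term genuinely lands in $\mathcal{C}_1$—together with the derivative computation underlying the base case.
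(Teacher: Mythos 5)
Your proof is correct, and it is essentially the standard argument for this proposition: the paper itself gives no proof here but cites \cite{ERS} (Sect.~3.3), where the result is established along exactly these lines. Specifically, the defect identity \eqref{T-id} combined with the suitability bound \eqref{suitable}, a one-variable differentiation--integration base case showing $T(e^{\lambda A})e^{-\lambda T(A)}-P\in\mathcal{C}_1$, and induction on $r$ (your telescoping is that induction unrolled) is precisely the route taken in the cited source, including the observation that the ``tilde'' case runs on the other half of condition \eqref{suitable}.
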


As a consequence, the  operator determinants
$$
        \det T(e^{\lambda_1A_1}\cdots e^{\lambda_rA_r})e^{-\lambda_rT(A_r)}\cdots e^{-\lambda_1T(A_1)},
$$
and 
$$
     \det T(e^{\lambda_1\widetilde{A_1}}\cdots e^{\lambda_r\widetilde{A_r}})e^{-\lambda_rT(\widetilde{A_r})}\cdots e^{-\lambda_1T(\widetilde{A_1})}
$$
are well-defined and depend analytically on each of the complex variables $\lambda_k$.
For the definition and basic properties of operator determinants, see, e.g.,  \cite{gohberg2013classes}.
As it turns out, these two kinds of operator determinants are related to each other.

\begin{prop}\label{prop det iden}
Let $\mathcal{R}$ be a suitable and unital Banach algebra, and let $A_1, \dots, A_r\in \mathcal{R}$. Then for each $\lambda_1, \dots, \lambda_r\in \mathbb{C}$, the operator determinant
    \[f(\lambda_1, \dots, \lambda_r):= \det T(e^{\lambda_1\widetilde{A_1}}\cdots e^{\lambda_r\widetilde{A_r}})e^{-\lambda_rT(\widetilde{A_r})}\cdots e^{-\lambda_1T(\widetilde{A_1})}\]
is equal to the operator determinant
    \[g(\lambda_1, \dots, \lambda_r):= \det e^{\lambda_1T(A_1)}\cdots e^{\lambda_rT(A_r)}T(e^{-\lambda_rA_r}\cdots e^{-\lambda_1A_1}).\]
\end{prop}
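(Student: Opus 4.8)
The plan is to establish the equality of the two analytic functions $f$ and $g$ by a deformation (logarithmic–derivative) argument: I would check that they agree at $\lambda=0$ and that all their partial logarithmic derivatives coincide, so that $f/g$ is constant on the connected complement of the common zero set of $f$ and $g$ (a proper analytic subvariety of $\C^r$), and hence $f\equiv g$ by continuity. Both $f$ and $g$ are near $1$ at the origin, so neither vanishes identically and this complement is genuinely connected.

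First I would settle the well-definedness and analyticity of $g$. Writing $E=e^{\lambda_1 T(A_1)}\cdots e^{\lambda_r T(A_r)}$ and $C^{-1}=e^{-\lambda_r A_r}\cdots e^{-\lambda_1 A_1}$, I would apply Proposition \ref{P plus trace class} (the function $F_0$) to the reversed tuple $(A_r,\dots,A_1)$ with parameters $(-\lambda_r,\dots,-\lambda_1)$; this yields that $T(C^{-1})E-P$ is trace class. Since on $\mathrm{im}\,P\cong l^2(\Zp)$ the operator $P$ is the identity, conjugation by the invertible $E$ gives $E\,T(C^{-1})=P+E\big(T(C^{-1})E-P\big)E^{-1}$, so $E\,T(C^{-1})-P$ is trace class as well, and by similarity invariance of the determinant $g=\det(E\,T(C^{-1}))=\det(T(C^{-1})E)$. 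In particular $g$ is analytic in each $\lambda_k$ with $g=1$ at $\lambda=0$, exactly as for $f=\det(P+F_1)$.

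Next I would compute the partial logarithmic derivatives. Differentiating $\Psi:=E\,T(C^{-1})$ in $\lambda_j$, using $\partial_{\lambda_j}T(C^{-1})=T(\partial_{\lambda_j}C^{-1})$, and splitting every $T$ of a product by the fundamental identity \eqref{T-id} $T(XY)=T(X)T(Y)+H(X)H(\widetilde Y)$, the result reorganizes into a commutator part and a Hankel part. Inserting this into $\partial_{\lambda_j}\log g=\trace(\Psi^{-1}\partial_{\lambda_j}\Psi)$ (valid where $g\neq0$), the commutator part contributes $\trace(\Psi^{-1}[T(A_j),\Psi])$, which I claim vanishes: indeed $[T(A_j),\Psi]=[T(A_j),\Psi-P]$ is trace class, and the elementary fact $\trace(M^{-1}[B,M])=0$ holds whenever $M-P$ is trace class and $B$ is bounded, because $M^{-1}-P$ commutes with $M-P$ (it is a function of it), so the cyclic cancellation is rigorous. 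The identical computation for $f$ then leaves, on each side, a single surviving Hankel trace of the form $\trace\big(T(\cdot)^{-1}H(\cdot)H(\cdot)\big)$.

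The heart of the argument, and the step I expect to be the main obstacle, is to show these two surviving traces agree for every $j$. For $r=1$ this is exactly
\[
\trace\big(T(e^{\lambda\widetilde A})^{-1}H(e^{\lambda\widetilde A})H(A)\big) = -\,\trace\big(T(e^{-\lambda A})^{-1}H(e^{-\lambda A})H(\widetilde A)\big),
\]
which I would prove from the unit identities $T(C)T(C^{-1})=P-H(C)H(\widetilde{C^{-1}})$ and its $\widetilde{\ }$-analogue (immediate from \eqref{T-id} since $CC^{-1}=I$), the invariance $\trace(JNJ)=\trace(N)$ of the trace under the involution $\widetilde{\ }$, and Sylvester's identity $\det(P-XY)=\det(P-YX)$; the needed cyclic rearrangements are legitimate because each product carries a trace class Hankel factor, so $\trace(SK)=\trace(KS)$ applies. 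A secondary obstacle is the bookkeeping for general $r$, where $\lambda_j$ sits in the interior of the products, which I would handle by the same direct differentiation while tracking the placement of the Hankel defects produced by \eqref{T-id}. It is worth emphasizing that the purely algebraic identities from \eqref{T-id} and Sylvester — for instance $\det(T(C)T(C^{-1}))=\det(T(C)E^{-1})\det(E\,T(C^{-1}))$ and its $\widetilde{\ }$-analogue — only yield one multiplicative relation among the four natural determinants $\det(T(C)E^{-1})$, $\det(E\,T(C^{-1}))$ and their tilded versions, which is consistent with but does \emph{not} imply $f=g$; this is precisely why the analytic input above is essential. Once the logarithmic derivatives are shown to coincide, $f$ and $g$ differ by a constant on the connected set where both are nonzero, and evaluating at $\lambda=0$ fixes this constant to be $1$, whence $f\equiv g$.
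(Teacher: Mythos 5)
You should note at the outset that the paper contains no proof of this proposition: it is quoted from \cite[Sect.~3.3]{ERS}, and the proof there is a differentiation argument of exactly the general type you outline, so your strategy is the right one. Several of your supporting steps are also correct and well justified: the well-definedness of $g$ via Proposition \ref{P plus trace class} applied to the reversed tuple; the lemma that $\trace(M^{-1}[B,M])=0$ for invertible $M$ with $M-P\in\mathcal{C}_1$ (your reason, that $M^{-1}-P$ commutes with $M-P$, is precisely what legitimizes the cyclic cancellation); and the analytic-continuation argument on the connected dense set $\{fg\neq0\}$. Your closing observation that Sylvester's identity together with the $T$-unit identities only ever produces one multiplicative relation among the four determinants, and hence cannot give $f=g$, is also correct.

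The genuine gap is that the step you yourself call the heart is never carried out, and the toolkit you name for it is not the one that works. Take the cleanest variable, $\lambda_r$, where with $C=e^{\lambda_1A_1}\cdots e^{\lambda_rA_r}$ one finds $\partial_{\lambda_r}\log f=\trace\bigl(T(\widetilde{C})^{-1}H(\widetilde{C})H(A_r)\bigr)$ and $\partial_{\lambda_r}\log g=-\trace\bigl(T(C^{-1})^{-1}H(A_r)H(\widetilde{C^{-1}})\bigr)$: equating these does not follow from Sylvester, tilde-invariance of the trace, and the $T$-unit identities --- by your own remark those only yield the insufficient multiplicative relation. What proves it is the \emph{companion Hankel identity} $H(XY)=T(X)H(Y)+H(X)T(\widetilde{Y})$, which you never invoke: applied to $\widetilde{C}\,\widetilde{C^{-1}}=I$ it gives $H(\widetilde{C^{-1}})=-T(\widetilde{C})^{-1}H(\widetilde{C})\,T(C^{-1})$ wherever $T(\widetilde{C})$ is invertible, and substituting this into the $g$-side trace and cycling turns it verbatim into the $f$-side trace. (Note that one of the needed cyclic moves involves two factors neither of which is trace class; ``each product carries a trace class Hankel factor'' is not enough there, and you must invoke the standard fact that $\trace(ST)=\trace(TS)$ whenever both products are trace class.) A second defect: your structural claim for interior variables is wrong as stated. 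For $1<j<r$, expanding $T(\widetilde{C_1}\widetilde{A_j}\widetilde{C_2})$ by \eqref{T-id} produces several Hankel defect terms and no single clean commutator, so the derivative does not reduce to ``a commutator part plus one surviving Hankel trace''; that structure occurs only for $j=1$ and $j=r$. Both problems are repaired simultaneously by the organization used in the cited proof: differentiate in $\lambda_r$ only, where $\partial_{\lambda_r}F=H(\widetilde{C})H(A_r)\,e^{-\lambda_rT(\widetilde{A_r})}\cdots e^{-\lambda_1T(\widetilde{A_1})}$ exactly, apply the identity above, and then induct on $r$ by evaluating at $\lambda_r=0$; your one-variable analyticity argument closes each induction step, and the messy interior derivatives are never needed.
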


%%%%%%%%%%%%%%%%%%%%%%%%%%%%%%%%%%%%%%%%
%%%%%%%%%%%%%%%%%%%%%%%%%%%%%%%%%%%%%%%%
\subsection{Banach algebras of sequences of finite sections}
\label{sec:4.2}

The third kind of Banach algebras associated with $\mathcal{R}$ are Banach algebras $\mathcal{S}_{h_1,h_2}(\cR)$ of sequences of matrices. In contrast to \cite[Sect.~4.2]{ERS} we need to consider a more general situation, which involves fractal sequences $h_1$ and $h_2$ for $\mathcal{R}$ (instead of distinguished sequences) and is motivated by the following.

\medskip
Let $h_1,h_2:\Zp\to\Z$ be two fractal sequences for $\mathcal{R}$ such that $h(n):=h_2(n) - h_1(n)>0$ and $h(n)\to \infty$
as $n\to\iy$.
Then, for $A\in\mathcal{R}\subseteq \mathcal{L}(l^2(\Z))$,
    \[P_{h_1(n), h_2(n)}AP_{h_1(n), h_2(n)}= U_{h_1(n)}P_{h(n)}P(U_{-h_1(n)}AU_{h_1(n)})PP_{h(n)}U_{-h_1(n)},\]
i.e., the sequence of finite sections obtained through $P_{h_1(n),h_2(n)}$ can be identified with a sequence of $h(n)\times h(n)$ matrices. Using the notation defined in \eqref{Un}
we obtain
\begin{align}\label{det-2}
\det P_{h_1(n), h_2(n)}AP_{h_1(n), h_2(n)}  
&= 
\det P_{h(n)} T(\mathcal{U}^{h_1(n)}A)P_{h(n)},
\\
 \label{tr-2}
 \trace P_{h_1(n), h_2(n)}AP_{h_1(n), h_2(n)}                              
 &= 
 \trace P_{h(n)} T(\mathcal{U}^{h_1(n)}A)P_{h(n)}.            
\end{align}
Therefore, as far as traces and determinants are concerned, instead of the finite sections
$P_{h_1(n),h_2(n)}A  P_{h_1(n),h_2(n)}$ it suffices to consider $P_{h(n)} T(\mathcal{U}^{h_1(n)}A)P_{h(n)}$.
In the following theorem we define the Banach algebra $\mathcal{S}_{h_1, h_2}(\mathcal{R})$,
the elements of which include such sequences.

\medskip
We also need the reflection operator $W_n: l^2(\Zp)\to l^2(\Zp)$, $n\in \Zp$, defined by
\begin{equation}\label{Wn} 
W_n:(x_k)_{k\in \Zp} \mapsto (y_k)_{k\in \Zp},\quad
        y_k = \begin{cases}
            x_{n-1-k} &\mathrm{ if }\enskip  0\le k<n\\
            0  &\mathrm{ if } \enskip k\ge n.
        \end{cases}
\end{equation}
Note that $W_n^2=P_n$ and $\mathrm{im} \,W_n=\mathrm{im}\, P_n$.

\begin{thm}\label{thm:4.4}
Let $\mathcal{R}$ be a rigid, suitable, shift-invariant and unital Banach subalgebra of $\mathcal{L}(l^2(\Z))$, and let $h_1$ and $h_2$ be  fractal sequences for $\mathcal{R}$ with associated $\mathcal{U}_1$ and $\mathcal{U}_2$, respectively, such that $h(n) := h_2(n) - h_1(n)>0$ and $h(n)\to \infty$ as $n\to\infty$. 

Then the set $\mathcal{S}_{h_1, h_2}(\mathcal{R})$ consisting of all sequences $(A_n)_{n\ge1}$ of operators $A_n:\mathrm{im} P_{h(n)}\to  \mathrm{im} P_{h(n)}$ of the form
    \begin{equation}\label{An}
        A_n = P_{h(n)}T(\mathcal{U}^{h_1(n)}A)P_{h(n)} + P_{h(n)}KP_{h(n)} + W_{h(n)}LW_{h(n)} + G_n
    \end{equation}
with $A\in \mathcal{R}$, $K, L\in \mathcal{C}_1(l^2(\Zp))$, $G_n\in \mathcal{C}_1(\mathrm{im} P_{h(n)})$ and $\|G_n\|_{\mathcal{C}_1}\to 0$ forms a unital Banach algebra with respect to the operations
    \[(A_n)+(B_n)=(A_n+B_n), \quad (A_n)\cdot(B_n) = (A_nB_n),\quad \lambda(A_n)= (\lambda A_n)\]
and with the norm
    \begin{equation}\label{Ann}
        \|(A_n)_{n\ge1}\|_{\mathcal{S}_{h_1, h_2}(\mathcal{R})}:= \|A\|_\mathcal{R} + \|K\|_{\mathcal{C}_1} + \|L\|_{\mathcal{C}_1} + \sup_{n\ge 1} \|G_n\|_{\mathcal{C}_1}.
    \end{equation}
Moreover, the set $\mathcal{J}_h(\mathcal{R})$ of all sequences $(J_n)$ of the form
    \[J_n = P_{h(n)}KP_{h(n)} + W_{h(n)}LW_{h(n)} + G_n\]
with $K, L\in \mathcal{C}_1(l^2(\Zp))$, $G_n\in \mathcal{C}_1(\mathrm{im} P_{h(n)})$and $\|G_n\|_{\mathcal{C}_1}\to 0$ forms a closed two-sided ideal of $\mathcal{S}_{h_1, h_2}(\mathcal{R})$.
\end{thm}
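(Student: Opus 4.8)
The plan is to reduce the entire statement to one explicit multiplication formula for the generating sequences together with a uniqueness statement for the decomposition \eqref{An}. Throughout write $Q_m:=P-P_m$ for the complementary projection on $l^2(\Zp)$ and abbreviate $A':=\mathcal{U}^{h_1(n)}A=U_{-h_1(n)}AU_{h_1(n)}$, and similarly $B'$. The backbone of the argument is the pair of exact finite-section identities
\begin{equation*}
W_{h(n)}\,T(A')\,W_{h(n)}=P_{h(n)}\,T(\widetilde{\mathcal{U}^{h_2(n)}A})\,P_{h(n)},
\end{equation*}
\begin{equation*}
P_{h(n)}\,T(A')\,Q_{h(n)}\,T(B')\,P_{h(n)}=W_{h(n)}\,H(\widetilde{\mathcal{U}^{h_2(n)}A})\,H(\mathcal{U}^{h_2(n)}B)\,W_{h(n)},
\end{equation*}
which I would verify by a direct computation of matrix entries: since $(\mathcal{U}^{h_1(n)}A)_{j,l}=A_{j+h_1(n),\,l+h_1(n)}$ and $h_1(n)+h(n)=h_2(n)$, the index flip $j\mapsto h(n)-1-j$ effected by $W_{h(n)}$ turns a tail index $l\ge h(n)$ into a Hankel index relative to the operator shifted to position $h_2(n)$. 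I regard these identities, especially the second, as the genuine content of the theorem: it is precisely the bookkeeping $h_1(n)+h(n)=h_2(n)$ that forces the reflected block to be governed by $h_2$ (hence by $\mathcal{U}_2$), while the non-reflected Hankel block is governed by $h_1$ (hence by $\mathcal{U}_1$). In the distinguished-sequence case of \cite[Sect.~4.2]{ERS} one has $h_1=0$ and only one shift is visible; the extension to arbitrary fractal $h_1,h_2$ rests entirely on this point, and I expect it to be the main obstacle.

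Granting the identities, closure under multiplication would follow by expanding the product of two elements of the form \eqref{An}. Combining $T(A')T(B')=T(\mathcal{U}^{h_1(n)}(AB))-H(A')H(\widetilde{B'})$ from \eqref{T-id} with $P_{h(n)}T(A')P_{h(n)}T(B')P_{h(n)}=P_{h(n)}T(A')T(B')P_{h(n)}-P_{h(n)}T(A')Q_{h(n)}T(B')P_{h(n)}$ and the second identity above, the symbol-times-symbol term becomes
\begin{equation*}
P_{h(n)}T(\mathcal{U}^{h_1(n)}(AB))P_{h(n)}-P_{h(n)}H(A')H(\widetilde{B'})P_{h(n)}-W_{h(n)}H(\widetilde{\mathcal{U}^{h_2(n)}A})H(\mathcal{U}^{h_2(n)}B)W_{h(n)}.
\end{equation*}
By suitability \eqref{suitable} both Hankel products are trace class, and since $h_1,h_2$ are fractal one has $\mathcal{U}^{h_j(n)}A\to\mathcal{U}_jA$ and $\mathcal{U}^{h_j(n)}B\to\mathcal{U}_jB$ in $\mathcal{R}$; the bilinear $\mathcal{C}_1$-continuity furnished by the bound in \eqref{suitable} then yields convergence to $K:=-H(\mathcal{U}_1A)H(\widetilde{\mathcal{U}_1B})$ and $L:=-H(\widetilde{\mathcal{U}_2A})H(\mathcal{U}_2B)$ with a remainder tending to $0$ in $\mathcal{C}_1$. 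Hence the symbol of the product is $AB$. The remaining mixed products (symbol against $P_{h(n)}KP_{h(n)}$, $W_{h(n)}LW_{h(n)}$, $G_n$, and products among these) I would dispatch with a short list of elementary facts: $P_{h(n)}T(A')P_{h(n)}\to T(\mathcal{U}_1A)$ and $W_{h(n)}T(A')W_{h(n)}\to T(\widetilde{\mathcal{U}_2A})$ $\ast$-strongly; multiplication of a fixed $\mathcal{C}_1$-operator by a bounded $\ast$-strongly convergent net converges in $\mathcal{C}_1$; $K W_{h(n)}L\to0$ in $\mathcal{C}_1$ for $K,L\in\mathcal{C}_1$ (the reflection escapes to infinity, proved by finite-rank approximation); and $\|W_{h(n)}YW_{h(n)}\|_{\mathcal{C}_1}\le\|Y\|_{\mathcal{C}_1}$. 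Each such term is thereby identified as a $P_{h(n)}KP_{h(n)}$-, a $W_{h(n)}LW_{h(n)}$-, or a null-$G_n$-contribution, so the product again lies in $\mathcal{S}_{h_1,h_2}(\mathcal{R})$; tracking the constant $M$ of \eqref{suitable} gives continuity of multiplication, so \eqref{Ann} is submultiplicative up to a fixed constant.

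It remains to show that \eqref{Ann} is well defined, which amounts to uniqueness of the decomposition \eqref{An}. Assuming the sequence in \eqref{An} is the zero sequence, I would pass to the weak operator limit: using $P_{h(n)}T(A')P_{h(n)}\to T(\mathcal{U}_1A)$, $P_{h(n)}KP_{h(n)}\to K$, $W_{h(n)}LW_{h(n)}\to0$ (weakly, by compactness of $L$) and $\|G_n\|_{\mathcal{C}_1}\to0$, one obtains $T(\mathcal{U}_1A)+K=0$, so $T(\mathcal{U}_1A)$ is compact. Rigidity of $\mathcal{R}$ then forces $\mathcal{U}_1A=0$, and as $\mathcal{U}_1$ is an isometric, hence injective, homomorphism, $A=0$ and consequently $K=0$. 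Conjugating the residual identity $W_{h(n)}LW_{h(n)}+G_n=0$ by $W_{h(n)}$ turns it into $P_{h(n)}LP_{h(n)}+W_{h(n)}G_nW_{h(n)}=0$, and letting $n\to\infty$ gives $L=0$, whence $G_n\equiv0$. By linearity the four data $A,K,L,(G_n)$ are uniquely determined, so \eqref{Ann} makes sense and identifies $\mathcal{S}_{h_1,h_2}(\mathcal{R})$ isometrically with the $\ell^1$-sum of $\mathcal{R}$, two copies of $\mathcal{C}_1$, and the space of $\mathcal{C}_1$-sequences tending to $0$ in norm; the latter being complete, $\mathcal{S}_{h_1,h_2}(\mathcal{R})$ is a Banach space, and is unital with unit $(P_{h(n)})$ corresponding to symbol $I\in\mathcal{R}$. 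Finally, the product computation shows that the symbol map $\Phi\colon(A_n)\mapsto A$ is a unital algebra homomorphism onto $\mathcal{R}$, and it is bounded since $\|A\|_{\mathcal{R}}\le\|(A_n)\|_{\mathcal{S}_{h_1,h_2}(\mathcal{R})}$; as $\mathcal{J}_h(\mathcal{R})=\ker\Phi$, it is automatically a closed two-sided ideal, which completes the proof.
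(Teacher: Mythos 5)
Your proposal is correct and follows essentially the same route as the paper's own proof: the same two reflection identities converting $W_{h(n)}$-conjugated terms into $h_2$-shifted Toeplitz and Hankel expressions, the same expansion of products via \eqref{T-id} with suitability and fractality supplying the trace-class limits $-H(\mathcal{U}_1A)H(\widetilde{\mathcal{U}_1B})$ and $-H(\widetilde{\mathcal{U}_2A})H(\mathcal{U}_2B)$, and the same weak-limit-plus-rigidity argument for uniqueness of the decomposition \eqref{An}. The only cosmetic differences are that you verify the key identities by matrix entries where the paper uses the operator identities \eqref{eq:4.9}, and that you package the ideal property of $\mathcal{J}_h(\mathcal{R})$ as the kernel of the bounded symbol homomorphism rather than reading it off the explicit product formulas.
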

\begin{proof}
Since $h_1$ and $h_2$ are fractal sequences for $\mathcal{R}$ with associated isometric Banach algebra homomorphisms $\mathcal{U}_1$ and $\mathcal{U}_2$
on $\mathcal{R}$,  we have for any $A\in \mathcal{R}$ that
    \begin{equation}\label{eq:4.8}
    \lim_{n\to \infty} \|\mathcal{U}^{h_1(n)}A - \mathcal{U}_1A\|_\mathcal{R} = 0
    \quad\mbox{and}\quad
    \lim_{n\to \infty} \|\mathcal{U}^{h_2(n)}A - \mathcal{U}_2A\|_\mathcal{R} = 0.
    \end{equation}
    Note that this implies, in particular, convergence in the operator norm.
    
Let us first show that the norm \eqref{Ann} is well-defined. For this we need to show that the various terms on the right hand side of 
\eqref{An} are uniquely determined  by 
the sequence $(A_n)$. Therefore, consider a sequence
    \[P_{h(n)}T(\mathcal{U}^{h_1(n)}A)P_{h(n)} + P_{h(n)}KP_{h(n)} + W_{h(n)}LW_{h(n)} + G_n = 0.\]
Since $W_{h(n)}\to0$ weakly, taking the strong limit as $n\to \infty$ yields that $T(\mathcal{U}_1A)+ K = 0$. Because of rigidity we have $K = 0$ and $\mathcal{U}_1A = 0$, which implies that $A=0$ as well. It follows that 
    \[W_{h(n)}LW_{h(n)} + G_n = 0.\]
Multiplying with $W_{h(n)}$ from both sides and noting that $W_{h(n)}^2=P_{h(n)}$, we obtain $L= 0$ by taking again strong limits. Thus 
$G_n=0$ as well, and this shows that the norm is well-defined.

It is obvious that $\mathcal{S}_{h_1, h_2}(\mathcal{R})$ is a linear space, which is complete with respect to its norm. 
It remains to show that it is indeed a Banach algebra. Consider
    \begin{align}
    \label{f:An}
         A_n &= P_{h(n)}T(\mathcal{U}^{h_1(n)}A)P_{h(n)} + P_{h(n)}K_1P_{h(n)} + W_{h(n)}L_1W_{h(n)} + G_n^{(1)},\\
     \label{f:Bn}
         B_n &= P_{h(n)}T(\mathcal{U}^{h_1(n)}B)P_{h(n)} + P_{h(n)}K_2P_{h(n)} + W_{h(n)}L_2W_{h(n)} + G_n^{(2)}.
    \end{align}
We have to show that $(A_nB_n)\in \mathcal{S}_{h_1, h_2}(\mathcal{R})$ and 
	\[\|(A_nB_n)\|_{\mathcal{S}_{h_1, h_2}(\mathcal{R})} \le C\,
	\|(A_n)\|_{\mathcal{S}_{h_1, h_2}(\mathcal{R})}
	\|(B_n)\|_{\mathcal{S}_{h_1, h_2}(\mathcal{R})}\]
for some constant $C$. For that purpose, we multiply each term in the first sum with each term in the second sum and are led to consider several cases. Note that if one of the factors is $G_n^{(1)}$ or $G_n^{(2)}$, then the product can easily be taken care of.

The remaining products are dealt with as follows. Firstly,
    \begin{align*}
        P_{h(n)}K_1P_{h(n)}\cdot P_{h(n)}K_2P_{h(n)} &= P_{h(n)}K_1K_2P_{h(n)} - P_{h(n)}K_1Q_{h(n)}K_2P_{h(n)},\\
        W_{h(n)}L_1W_{h(n)}\cdot W_{h(n)}L_2W_{h(n)} &= W_{h(n)}L_1L_2W_{h(n)} - W_{h(n)}L_1Q_{h(n)}L_2W_{h(n)},\\
        P_{h(n)}K_1P_{h(n)}\cdot W_{h(n)}L_2W_{h(n)} &= P_{h(n)}K_1W_{h(n)}L_2W_{h(n)},\\
	W_{h(n)}L_1W_{h(n)}\cdot P_{h(n)}K_2P_{h(n)} &= W_{h(n)}L_1W_{h(n)}K_2P_{h(n)}.
    \end{align*}
Since $Q_{h(n)}:= P-P_{h(n)}$ converges to zero strongly, and $W_{h(n)}$ converges to zero weakly on $l^2(\Zp)$, the last term in the first two equations as well as the terms in the third and fourth equation converge to zero in the trace norm.

Now consider the following product and write it as
    \begin{align*}
                 \lefteqn{P_{h(n)}T(\mathcal{U}^{h_1(n)}A)P_{h(n)}\cdot P_{h(n)}K_2P_{h(n)}} \\
        &= P_{h(n)}T(\mathcal{U}^{h_1(n)}A)K_2P_{h(n)} - P_{h(n)}T(\mathcal{U}^{h_1(n)}A)Q_{h(n)}K_2P_{h(n)}\\
        &= P_{h(n)}T(\mathcal{U}_1A)K_2P_{h(n)} + P_{h(n)}T(\mathcal{U}^{h_1(n)}A - \mathcal{U}_1A)K_2P_{h(n)} - P_{h(n)}T(\mathcal{U}^{h_1(n)}A)Q_{h(n)}K_2P_{h(n)}\\
        &= P_{h(n)}T(\mathcal{U}_1A)K_2P_{h(n)}+C_n',
    \end{align*}
where $C_n'$ consist of the last two terms, both of which converge to zero in the trace norm due to 
 \eqref{eq:4.8} and since $Q_n=Q_n^*\to0$ strongly.
Similarly,
	\begin{align*}
	  \lefteqn{P_{h(n)}K_1P_{h(n)}\cdot P_{h(n)}T(\mathcal{U}^{h_1(n)}B)P_{h(n)} }\\
	&=  P_{h(n)}K_1T(\mathcal{U}_1B)P_{h(n)} +  P_{h(n)}K_1T(\mathcal{U}^{h_1(n)}B - \mathcal{U}_1B)P_{h(n)} - P_{h(n)}K_1Q_{h(n)}T(\mathcal{U}^{h_1(n)}B)P_{h(n)}\\
	&=  P_{h(n)}K_1T(\mathcal{U}_1B)P_{h(n)}+C_n'',
	\end{align*}
where $C_n''$ converges to zero in the trace norm for the same reasons. Morevover, observe that we have the estimates 
$$
\|C_n'\|_{\cC_1}\le 3 \|A\|_{\cR} \|K_2\|_{\cC_1},\qquad \|C_n''\|_{\cC_1}\le 3 \|K_1\|_{\cC_1}\|B\|_{\cR}
$$
for all $n$.

Before proceeding to the next cases, remark that   
 \begin{equation}\label{eq:4.9}
 W_{h(n)}P = P_{h(n)}PJU_{-h(n)}\quad\mbox{and}\quad PW_{h(n)} = U_{h(n)}JPP_{h(n)},
 \end{equation}
and therefore
	\begin{align*}
		W_{h(n)}T(\mathcal{U}^{h_1(n)} A)W_{h(n)} &= P_{h(n)}PJU_{-h(n)}
		   (\mathcal{U}^{h_1(n)}A)    U_{h(n)}JPP_{h(n)}\\
				&= P_{h(n)} T(\widetilde{\mathcal{U}^{h_2(n)}A})P_{h(n)}. 
	\end{align*}
Here recall that $\widetilde{A}=JAJ$, that $\mathcal{U}^{h_i(n)}$ are defined in \eqref{Un}, and note 
that $h(n)=h_2(n)-h_1(n)$.
Using this identity, the following two products can be written as
\begin{align*}
        P_{h(n)}  T(\mathcal{U}^{h_1(n)}A)  P_{h(n)}\cdot W_{h(n)}L_2W_{h(n)}
         &= W_{h(n)}\left(P_{h(n)}  T(\widetilde{\mathcal{U}^{h_2(n)}A})  P_{h(n)})L_2P_{h(n)} \right) W_{h(n)},
  \\
  W_{h(n)}L_1W_{h(n)} \cdot  P_{h(n)}  T(\mathcal{U}^{h_1(n)}B)  P_{h(n)}
  &= W_{h(n)} \left( P_{h(n)} L_1   P_{h(n)} T(\widetilde{\mathcal{U}^{h_2(n)} B }) P_{h(n)} \right) W_{h(n)}.
\end{align*}
The resulting expressions are of the same form as those in the last two cases
except for the $W_{h(n)}$'s on each side. One can deal with them in the same manner to obtain
\begin{align*}
        P_{h(n)}  T(\mathcal{U}^{h_1(n)}A)  P_{h(n)}\cdot W_{h(n)}L_2W_{h(n)}
         &= W_{h(n)} T(\widetilde{\mathcal{U}_2 A}) L_2 W_{h(n)} + D_n', 
  \\
  W_{h(n)}L_1W_{h(n)} \cdot  P_{h(n)}  T(\mathcal{U}^{h_1(n)}B)  P_{h(n)}
  &= W_{h(n)} L_1    T(\widetilde{\mathcal{U}_2 B }) W_{h(n)} + D_n'',
\end{align*}
where $D_n'$ and $D_n''$ are sequences tending to zero in the trace norm, while in addition
their trace norms are bounded.

Finally, the last case to consider is that of the product
    \begin{align*}
            \lefteqn{ P_{h(n)}T(\mathcal{U}^{h_1(n)}A)P_{h(n)}\cdot P_{h(n)}T(\mathcal{U}^{h_1(n)}B)P_{h(n)}}\qquad
            \\
          & =
          P_{h(n)}T(\mathcal{U}^{h_1(n)}(AB))P_{h(n)}-P_{h(n)}H(\mathcal{U}^{h_1(n)}A)H(\widetilde{\mathcal{U}^{h_1(n)}B})P_{h(n)} \\
         &\quad  - P_{h(n)}T(\mathcal{U}^{h_1(n)}A)Q_{h(n)}T(\mathcal{U}^{h_1(n)}B)P_{h(n)}.
    \end{align*}
Here we used  \eqref{T-id} and that $\mathcal{U}^{h_1(n)}$ is multiplicative. The second term $H(\mathcal{U}^{h_1(n)}A)H(\widetilde{\mathcal{U}^{h_1(n)}B})$ is trace class
and we have 
$$
\|H(\mathcal{U}^{h_1(n)}A)H(\widetilde{\mathcal{U}^{h_1(n)}B})\|_{\cC_1}\le M \| \mathcal{U}^{h_1(n)}A\|_{\cR}
\|\mathcal{U}^{h_1(n)}B\|_{\cR} = M \|A\|_{\cR}\|B\|_{\cR}
$$
 by (\ref{suitable}) and since $\cR$ is shift-invariant. In fact, we can write
\begin{align*}
\lefteqn{H(\mathcal{U}^{h_1(n)}A)H(\widetilde{\mathcal{U}^{h_1(n)}B})}
 \\
 &= 
 H(\mathcal{U}_1A)H(\widetilde{\mathcal{U}_1 B})
 +
 H(\mathcal{U}^{h_1(n)}A-\mathcal{U}_1A)H(\widetilde{\mathcal{U}_1B})
 +
 H(\mathcal{U}^{h_1(n)}A)H(\widetilde{\mathcal{U}^{h_1(n)}B-\mathcal{U}_1B}).
\end{align*}
Using \eqref{eq:4.8} and the estimate of the kind we just employed, it is easily seen that 
the last two terms converge to zero in the trace norm as $n\to\infty$.
Their trace norm can be estimated uniformly as well. Therefore, the second term becomes
$$
P_{h(n)} H(\mathcal{U}^{h_1(n)}A)H(\widetilde{\mathcal{U}^{h_1(n)}B}) P_{h(n)}
 =  P_{h(n)} H(\mathcal{U}_1 A) H(\widetilde{\mathcal{U}_1 B  }) P_{h(n)}+ E_n'
$$ 
with $ H(\mathcal{U}_1 A) H(\widetilde{\mathcal{U}_1 B  })$ being trace class, and $E_n'\to0$ in trace norm, 
and $\|E_n'\|\le 2M\|A\|_{\cR}\|B\|_{\cR}$. Regarding the third term we note that 
$$
P_{h(n)}T(\mathcal{U}^{h_1(n)}A)Q_{h(n)}T(\mathcal{U}^{h_1(n)}B)P_{h(n)}
= W_{h(n)} H(\widetilde{ \mathcal{U}^{h_2(n)} A}) H(\mathcal{U}^{h_2(n)} B) W_{h(n)}.
$$ 
Indeed, this identity can be verified by using \eqref{eq:4.9} and  the identity $PQ_{h(n)}P = U_{h(n)}PU_{-h(n)}$.
With the same kind of arguments as just  employed it follows that this equals 
$$
W_{h(n)} H(\widetilde{ \mathcal{U}_2A})H(\mathcal{U}_2  B) W_{h(n)} + E_n''
$$
with $H(\widetilde{ \mathcal{U}_2A})H(\mathcal{U}_2  B)$ being trace class, and $E_n''\to0$ in trace norm, 
and $\|E_n''\|\le 2M\|A\|_{\cR}\|B\|_{\cR}$.  We now arrive at 
\begin{align*}
       \lefteqn{ P_{h(n)}T(\mathcal{U}^{h_1(n)}A)P_{h(n)}\cdot P_{h(n)}T(\mathcal{U}^{h_1(n)}B)P_{h(n)}}\qquad
            \\
          & =
          P_{h(n)}T(\mathcal{U}^{h_1(n)}(AB))P_{h(n)}-P_{h(n)}H(\mathcal{U}_1A)H(\widetilde{\mathcal{U}_1B})P_{h(n)} \\
         &\quad  - P_{h(n)}H(\widetilde{\mathcal{U}_2A})H(\mathcal{U}_2B)W_{h(n)}-E_n'-E_n''.   
\end{align*}

To summarize what we have done so far, the product of the above $(A_n)$ and $(B_n)$ can be written as
    \begin{equation}\label{AnBn}
        A_nB_n = P_{h(n)}T\left(\mathcal{U}^{h_1(n)}(AB)\right)P_{h(n)} + P_{h(n)}KP_{h(n)} + W_{h(n)}LW_{h(n)} + G_n,
    \end{equation}
where
    \begin{align}
        K &= K_1K_2 + T(\mathcal{U}_1A)K_2 + K_1 T(\mathcal{U}_1B) - H(\mathcal{U}_1A)H(\widetilde{\mathcal{U}_1B}),\\
        L &= L_1L_2 + T(\widetilde{\mathcal{U}_2A})L_2 + L_1T(\widetilde{\mathcal{U}_2B}) - H(\widetilde{\mathcal{U}_2A})H(\mathcal{U}_2B)\label{AnBn KL}
    \end{align}
and where $G_n$ is a sequence of trace class operators converging to zero in the trace norm. Expressing $G_n$ explicitly using the above computations, it is easily seen that there exists a constant $C$ such that
    \[\|(A_nB_n)\|_{\mathcal{S}_{h_1, h_2}(\mathcal{R})} \le C\,\|(A_n)\|_{\mathcal{S}_{h_1, h_2}(\mathcal{R})}
    \|(B_n)\|_{\mathcal{S}_{h_1, h_2}(\mathcal{R})}.\]
Therefore, we can conclude that $\mathcal{S}_{h_1,h_2}(\cR)$ is a Banach algebra.
    
Finally, formulas (\ref{AnBn})--(\ref{AnBn KL}) imply that $\mathcal{J}_h(\mathcal{R})$ is an ideal of $\mathcal{S}_{h_1, h_2}(\mathcal{R})$, and it is indeed closed by the definition of the norm.
\end{proof}

\begin{thm}\label{thm:4.5}
Under the assumptions of the preceding theorem, the mappings $\mathcal{W}_{h_1,h_2}$ and $\widetilde{\mathcal{W}_{h_1,h_2}}$ defined by
    \begin{align}
        \mathcal{W}_{h_1,h_2}:\quad (A_n)_{n\ge 1}\in \mathcal{S}_{h_1, h_2}(\mathcal{R}) &\quad\mapsto\quad T(\mathcal{U}_1A) + K\in \mathcal{O}(\mathcal{R}),\\
        \widetilde{\mathcal{W}_{h_1,h_2}}:\quad  (A_n)_{n\ge 1}\in \mathcal{S}_{h_1, h_2}(\mathcal{R}) &\quad \mapsto\quad T(\widetilde{\mathcal{U}_2A}) + L\in \mathcal{O}(\widetilde{\mathcal{R}}),
    \end{align}
where $(A_n)$ is of the form (\ref{An}), are well defined unital Banach algebra homomorphisms.
\end{thm}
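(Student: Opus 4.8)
The plan is to read off essentially everything from the product formula already established in the proof of Theorem~\ref{thm:4.4}. First I would settle well-definedness. The uniqueness of the representation \eqref{An} in its components $A\in\cR$, $K,L\in\cC_1$ (and $G_n$) was proved in Theorem~\ref{thm:4.4}; it is precisely what makes the norm \eqref{Ann} well-defined, so each of the two maps assigns a single value to every sequence $(A_n)$. That the values land in the correct algebras is immediate: $\cU_1A,\cU_2A\in\cR$ since the $\cU_i$ map $\cR$ into itself, whence $T(\cU_1A)+K\in\mathcal{O}(\cR)$, and likewise $\widetilde{\cU_2A}=J(\cU_2A)J$ gives $T(\widetilde{\cU_2A})+L\in\mathcal{O}(\widetilde{\cR})$ by definition of these algebras.

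Linearity is clear from the componentwise operations, and boundedness is a one-line estimate using that the $\cU_i$ are isometric:
$$\|\mathcal{W}_{h_1,h_2}((A_n))\|_{\mathcal{O}(\cR)}=\|\cU_1A\|_{\cR}+\|K\|_{\cC_1}=\|A\|_{\cR}+\|K\|_{\cC_1}\le\|(A_n)\|_{\mathcal{S}_{h_1,h_2}(\cR)},$$
and the same computation with $\cU_2$ and $L$ in place of $\cU_1$ and $K$ bounds $\widetilde{\mathcal{W}_{h_1,h_2}}$.

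The one substantive point is multiplicativity, and its entire content is a cancellation of Hankel terms. Taking $(A_n),(B_n)$ as in \eqref{f:An}--\eqref{f:Bn}, the product formula \eqref{AnBn}--\eqref{AnBn KL} shows that $(A_nB_n)$ has leading symbol $AB$ and trace-class component $K=K_1K_2+T(\cU_1A)K_2+K_1T(\cU_1B)-H(\cU_1A)H(\widetilde{\cU_1B})$, so $\mathcal{W}_{h_1,h_2}((A_n)(B_n))=T(\cU_1(AB))+K$. I would then use that $\cU_1$ is a homomorphism, $\cU_1(AB)=(\cU_1A)(\cU_1B)$, and apply the Toeplitz identity \eqref{T-id} in the form $T((\cU_1A)(\cU_1B))=T(\cU_1A)T(\cU_1B)+H(\cU_1A)H(\widetilde{\cU_1B})$. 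Substituting, the two occurrences of $H(\cU_1A)H(\widetilde{\cU_1B})$ cancel and the remainder factors as $(T(\cU_1A)+K_1)(T(\cU_1B)+K_2)=\mathcal{W}_{h_1,h_2}((A_n))\,\mathcal{W}_{h_1,h_2}((B_n))$. The argument for $\widetilde{\mathcal{W}_{h_1,h_2}}$ is structurally identical: one uses that $A\mapsto\widetilde A$ is multiplicative with $\widetilde{\widetilde{\cU_2 B}}=\cU_2B$, so $\widetilde{\cU_2(AB)}=\widetilde{\cU_2A}\,\widetilde{\cU_2B}$ and \eqref{T-id} produces the term $H(\widetilde{\cU_2A})H(\cU_2B)$ that cancels exactly the corresponding term in \eqref{AnBn KL}.

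Finally, for unitality I would note that the unit of $\mathcal{S}_{h_1,h_2}(\cR)$ is the sequence $(P_{h(n)})$, represented by \eqref{An} with $A=I$, $K=L=0$, $G_n=0$, since $\cU^{h_1(n)}I=I$ and $T(I)=P$ give $P_{h(n)}T(I)P_{h(n)}=P_{h(n)}$. Its images $T(\cU_1I)=T(I)=P$ and $T(\widetilde{\cU_2I})=T(I)=P$ are the units of $\mathcal{O}(\cR)$ and $\mathcal{O}(\widetilde{\cR})$, respectively. I do not expect any genuine obstacle here: the definitions of $\mathcal{W}_{h_1,h_2}$ and $\widetilde{\mathcal{W}_{h_1,h_2}}$ are engineered precisely so that the Hankel defect in the Toeplitz product identity is absorbed, turning the non-multiplicative symbol map into a homomorphism, and all the hard bookkeeping is already contained in Theorem~\ref{thm:4.4}.
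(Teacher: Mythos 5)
Your proof is correct and takes essentially the same approach as the paper's: well-definedness and continuity via the norm identity (resting on the uniqueness of the decomposition \eqref{An} established in Theorem \ref{thm:4.4}), and multiplicativity by combining the product formulas \eqref{AnBn}--\eqref{AnBn KL} with the identity \eqref{T-id} and the multiplicativity of $\mathcal{U}_1$ and $\mathcal{U}_2$. Your explicit treatment of unitality and of the $\widetilde{\mathcal{W}_{h_1,h_2}}$ case merely spells out details the paper leaves to the reader.
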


Here, as before,  $\mathcal{U}_1$ and $ \mathcal{U}_2$ are the (isometric, unital) Banach algebra homomorphisms on 
$\mathcal{R}$ associated with the fractal sequences $h_1$ and $h_2$.

\begin{proof}
The norms of these mappings applied to a sequence $(A_n)$ of the form \eqref{An} are given by
    \begin{align*}
        \|\mathcal{W}_{h_1,h_2}((A_n))\|_{\mathcal{O}(\mathcal{R})} &= \|\mathcal{U}_1A\|_\mathcal{R} + \|K\|_{\mathcal{C}_1} = \|A\|_\mathcal{R} + \|K\|_{\mathcal{C}_1},\\
        \|\widetilde{\mathcal{W}_{h_1,h_2}}((A_n))\|_{\mathcal{O}(\widetilde{\mathcal{R}})} &= \|\widetilde{\mathcal{U}_2A}\|_{\widetilde{\mathcal{R}}} + \|L\|_{\mathcal{C}_1} = \|A\|_\mathcal{R} + \|L\|_{\mathcal{C}_1}.
    \end{align*}
In view of \eqref{Ann} this implies that these mappings are well defined and continuous. Linearity is obvious. 
Multiplicativity follows from formulas (\ref{AnBn})--(\ref{AnBn KL}). For instance, if $(A_n)$ and $(B_n)$ are given by
\eqref{f:An} and \eqref{f:Bn}, then
$$
\mathcal{W}_{h_1,h_2}((A_n))= T(\mathcal{U}_1A)+K_1,\qquad
\mathcal{W}_{h_1,h_2}((B_n))= T(\mathcal{U}_1B)+K_2,
$$
while 
$$
\mathcal{W}_{h_1,h_2}((A_n)(B_n)) =
T(\mathcal{U}_1(AB))+K_1K_2+T(\mathcal{U}_1A)K_2+K_1 T(\mathcal{U}_1B)-H(\mathcal{U}_1A)
H(\widetilde{\mathcal{U}_1 B}).
$$
Now take \eqref{T-id} into account and note that $\mathcal{U}_1$ is multiplicative as well.
\end{proof}

%%%%%%%%%%%%%%%%%%%%%%%%%%%%%%%%%%%%%%%%%%%%%%%%%%%%%%%%%%%%%%%%%%%%%%%%%%%%%%%%%%%%%%%%%%%%%%%%%%%%%%%%%%%%%%%%%%%%%%%%%%%%%%%%%%%%%%%%%%%%%%%%%%%%%%%%%%%%%%%%
\section{\Large \bf First versions of the limit theorem}
\label{sec:5}

In this section we establish the abstract version and a more concrete version of the limit theorems.
Both deal with the asymptotics of the determinants of  $P_{h_1(n),h_2(n)}A P_{h_1(n),h_2(n)}$ where $h_1$ and $h_2$ are fractal sequences.

The abstract version reduces the determinant asymptotics to the asymptotics of traces. It applies to operators $A$ taken from any suitable, rigid, shift-invariant and unital Banach algebra $\cR$. 

In the more concrete version (referred to as the fractal version), we specialize to the case of 
the Banach algebras $\cR=\mathcal{W}_{\alpha_1,\alpha_2}(\cA)$ with  $\cA=APW(\Z,\Xi,\beta)$.
 Here the trace computation is carried out by help of Theorem \ref{thm trace eva}.

\medskip
We need two more auxiliary results for the abstract version.
The first one is a direct consequence of the results of the previous section.

\begin{prop}\label{prop Bn}
Let $\mathcal{R}$ be a rigid, suitable, shift-invariant and unital Banach subalgebra of $\mathcal{L}(l^2(\Z))$,
 and let $h_1$ and $h_2$ be  fractal sequences for $\mathcal{R}$ with associated $\mathcal{U}_1$ and $\mathcal{U}_2$, respectively, such that $h(n) := h_2(n) - h_1(n)>0$ and $h(n)\to \infty$ as $n\to\infty$. 

Then for any $A_1, \dots, A_r\in \mathcal{R}$, the sequence $(B_n)$ defined by
    \begin{equation}\label{Bn for SW thm}
        B_n :=P_{h(n)}T(\mathcal{U}^{h_1(n)}(e^{A_1}\cdots e^{A_r}))P_{h(n)}\cdot e^{-P_{h(n)}T(\mathcal{U}^{h_1(n)}A_r)P_{h(n)}}\cdots e^{-P_{h(n)}T(\mathcal{U}^{h_1(n)}A_1)P_{h(n)}}
    \end{equation}
belongs to $\mathcal{S}_{h_1,h_2}(\mathcal{R})$. Furthermore, there exist $K,L\in \mathcal{C}_1(l^2(\Zp))$, and $G_n\in \mathcal{C}_1(\mathrm{im} P_{h(n)})$ with $\|G_n\|_{\mathcal{C}_1}\to 0$ such that
    \begin{equation}\label{Bn simp for SW thm}
        B_n = P_{h(n)} + P_{h(n)}KP_{h(n)} + W_{h(n)}LW_{h(n)} + G_n.
    \end{equation}
The operators $K$ and $L$ are determined by
    \begin{align}
        P+K &= T(\mathcal{U}_1(e^{A_1}\cdots e^{A_r}))e^{-T(\mathcal{U}_1A_r)}\cdots e^{-T(\mathcal{U}_1A_1)},\\
        P+L &= T(\widetilde{\mathcal{U}_2(e^{A_1}\cdots e^{A_r})})e^{-T(\widetilde{\mathcal{U}_2A_r})}\cdots e^{-T(\widetilde{\mathcal{U}_2A_1})}.
    \end{align}
\end{prop}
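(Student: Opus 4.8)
The plan is to recognize $(B_n)$ as the sequence of $n$-th entries of a single element of the Banach algebra $\mathcal{S}_{h_1,h_2}(\mathcal{R})$ from Theorem \ref{thm:4.4}, and then to read off its data via the two homomorphisms of Theorem \ref{thm:4.5}. For $A\in\mathcal{R}$ write $\Phi(A):=\left(P_{h(n)}T(\mathcal{U}^{h_1(n)}A)P_{h(n)}\right)_{n\ge1}$; this is exactly the element of $\mathcal{S}_{h_1,h_2}(\mathcal{R})$ of the pure form \eqref{An} with $K=L=0$ and $G_n=0$. The crucial structural remark is that the algebra operations in $\mathcal{S}_{h_1,h_2}(\mathcal{R})$ are defined entrywise and that the $n$-th entry map is contractive from $\mathcal{S}_{h_1,h_2}(\mathcal{R})$ into $\mathcal{L}(\mathrm{im}\,P_{h(n)})$ (this follows from \eqref{Ann} together with the continuous embedding and shift-invariance of $\mathcal{R}$, and $\|K\|_{\mathcal{L}}\le\|K\|_{\mathcal{C}_1}$). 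Consequently, any norm-convergent power series evaluated in $\mathcal{S}_{h_1,h_2}(\mathcal{R})$ has $n$-th entry equal to that series evaluated at the $n$-th entries; in particular the Banach-algebra exponential $\exp(-\Phi(A_k))$ has $n$-th entry $e^{-P_{h(n)}T(\mathcal{U}^{h_1(n)}A_k)P_{h(n)}}$.

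With this in hand I would first observe that
\[
(B_n)=\Phi(e^{A_1}\cdots e^{A_r})\cdot\exp(-\Phi(A_r))\cdots\exp(-\Phi(A_1)),
\]
which lies in $\mathcal{S}_{h_1,h_2}(\mathcal{R})$ simply because the latter is a Banach algebra containing $\Phi(e^{A_1}\cdots e^{A_r})$. This proves the first assertion and puts $(B_n)$ into its canonical form \eqref{An} with a well-defined symbol $C\in\mathcal{R}$, trace-class operators $K,L$, and a null sequence $G_n$.

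Next I would apply $\mathcal{W}_{h_1,h_2}$ and $\widetilde{\mathcal{W}_{h_1,h_2}}$. Since $\mathcal{W}_{h_1,h_2}(\Phi(A))=T(\mathcal{U}_1A)$ and $\mathcal{W}_{h_1,h_2}$ is a continuous unital homomorphism (hence commutes with products and exponentials), multiplicativity gives
\[
\mathcal{W}_{h_1,h_2}((B_n))=T(\mathcal{U}_1(e^{A_1}\cdots e^{A_r}))\,e^{-T(\mathcal{U}_1A_r)}\cdots e^{-T(\mathcal{U}_1A_1)},
\]
and similarly $\widetilde{\mathcal{W}_{h_1,h_2}}((B_n))=T(\widetilde{\mathcal{U}_2(e^{A_1}\cdots e^{A_r})})\,e^{-T(\widetilde{\mathcal{U}_2A_r})}\cdots e^{-T(\widetilde{\mathcal{U}_2A_1})}$, using that $\mathcal{U}_1,\mathcal{U}_2$ and $\widetilde{\,\cdot\,}$ are continuous unital homomorphisms. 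By Proposition \ref{P plus trace class} (with $\lambda_k=1$ and $A_k$ replaced by $\mathcal{U}_1A_k$, resp.\ $\mathcal{U}_2A_k$) both right-hand sides equal $P$ plus a trace-class operator. On the other hand, the definition of the homomorphisms gives $\mathcal{W}_{h_1,h_2}((B_n))=T(\mathcal{U}_1C)+K$ and $\widetilde{\mathcal{W}_{h_1,h_2}}((B_n))=T(\widetilde{\mathcal{U}_2C})+L$. Comparing the two expressions in $\mathcal{O}(\mathcal{R})$, whose decomposition $T(\,\cdot\,)+(\text{trace class})$ is unique by rigidity, yields $\mathcal{U}_1C=I$ along with the asserted formula for $P+K$; since $\mathcal{U}_1$ is isometric, hence injective, this forces $C=I$. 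The $\widetilde{\mathcal{W}_{h_1,h_2}}$ computation then gives the formula for $P+L$. Finally $\mathcal{U}^{h_1(n)}I=I$ and $P_{h(n)}T(I)P_{h(n)}=P_{h(n)}$ turn the symbol term into $P_{h(n)}$, which is precisely \eqref{Bn simp for SW thm}.

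I expect the only genuinely delicate point to be the very first one: justifying that the operator exponentials and products defining $B_n$ coincide entrywise with the operations performed inside $\mathcal{S}_{h_1,h_2}(\mathcal{R})$, which is what legitimizes the identification in the display above. Everything downstream — membership, the two homomorphism computations, and the rigidity/injectivity argument pinning down $C=I$ — is then bookkeeping built on Theorems \ref{thm:4.4} and \ref{thm:4.5} and Proposition \ref{P plus trace class}. One subtlety worth verifying explicitly is that each entrywise exponential is taken in $\mathcal{L}(\mathrm{im}\,P_{h(n)})$, whose unit is $P_{h(n)}$ rather than $P$; this is consistent with the unit of $\mathcal{S}_{h_1,h_2}(\mathcal{R})$ being $(P_{h(n)})_{n\ge1}$.
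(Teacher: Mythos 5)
Your proof is correct, and its overall architecture coincides with the paper's: the paper likewise realizes $(B_n)$ as the product of $\Lambda(e^{A_1}\cdots e^{A_r})$ with the exponentials $e^{-\Lambda(A_k)}$ inside $\mathcal{S}_{h_1,h_2}(\mathcal{R})$ (it writes $\Lambda$ for your entrywise lift map, reserving the letter $\Phi$ for the quotient map), and it likewise reads off $K$ and $L$ via the homomorphisms $\mathcal{W}_{h_1,h_2}$ and $\widetilde{\mathcal{W}_{h_1,h_2}}$ of Theorem~\ref{thm:4.5}; your justification of $(e^{A_n})=e^{(A_n)}$ through the contractive unital entry homomorphisms, with the unit of $\mathcal{L}(\mathrm{im}\,P_{h(n)})$ being $P_{h(n)}$, is exactly the point the paper asserts in one line without proof. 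Where you genuinely differ is in establishing \eqref{Bn simp for SW thm}, i.e., that the symbol part of $(B_n)$ is the identity. The paper stays inside $\mathcal{S}_{h_1,h_2}(\mathcal{R})$: it passes to the quotient modulo the ideal $\mathcal{J}_h(\mathcal{R})$, checks via \eqref{AnBn}--\eqref{AnBn KL} that the quotient map composed with $\Lambda$ is multiplicative, and concludes that $(B_n)$ agrees modulo $\mathcal{J}_h(\mathcal{R})$ with the image of $e^{A_1}\cdots e^{A_r}e^{-A_r}\cdots e^{-A_1}=I$, which is precisely \eqref{Bn simp for SW thm}; the formulas for $P+K$ and $P+L$ then drop out by applying the two homomorphisms to both sides of \eqref{Bn simp for SW thm}. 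You instead keep an unknown symbol $C\in\mathcal{R}$, compute $\mathcal{W}_{h_1,h_2}((B_n))$ in two ways, use Proposition~\ref{P plus trace class} to recognize the product expression as $P$ plus a trace class operator, and then invoke rigidity (which gives uniqueness of the decomposition $T(\cdot)+\mathcal{C}_1$ in $\mathcal{O}(\mathcal{R})$) together with the unitality and injectivity of the isometry $\mathcal{U}_1$ to force $\mathcal{U}_1C=I$, hence $C=I$. Both mechanisms are sound and of comparable length: the paper's quotient argument needs nothing beyond Theorem~\ref{thm:4.4} itself (Proposition~\ref{P plus trace class} never enters its proof of this proposition, and the trace-classness of $K$ and $L$ comes for free from the ideal), while your route avoids introducing the quotient algebra at the cost of these extra, but readily available, ingredients; the downstream bookkeeping is then identical.
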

\begin{proof}
Note that for any $(A_n)\in \mathcal{S}_{h_1,h_2}(\mathcal{R})$, $(e^{A_n}) = e^{(A_n)}$, and hence $(B_n)$ defined above is in $\mathcal{S}_{h_1,h_2}(\mathcal{R})$. Define the bounded linear map
    \begin{equation}
        \Lambda: A\in \mathcal{R} \enskip \mapsto \enskip (P_{h(n)}T(\mathcal{U}^{h_1(n)}A)P_{h(n)})\in \mathcal{S}_{h_1,h_2}(\mathcal{R}).
    \end{equation}
We rewrite $(B_n)$ as
    \[(B_n) = \Lambda(e^{A_1}\cdots e^{A_r})e^{-\Lambda(A_r)}\cdots e^{-\Lambda(A_1)}.\]
%By the norm of $\mathcal{S}_{h_1,h_2}(\mathcal{R})$, $\Lambda$ is linear and bounded. 
Furthermore, denote by $\Phi$ the natural homomorphism
    \[\Phi:\mathcal{S}_{h_1,h_2}(\mathcal{R}) \to \mathcal{S}_{h_1,h_2}(\mathcal{R})/\mathcal{J}_h(\mathcal{R}),\quad (A_n)\mapsto (A_n) +\mathcal{J}_h(\mathcal{R}). \]
Then $\Phi\circ \Lambda:\mathcal{R}\to \mathcal{S}_{h_1,h_2}(\mathcal{R})/\mathcal{J}_h(\mathcal{R})$ is a continuous Banach algebra homomorphism. Indeed, in view of \eqref{f:An}, \eqref{f:Bn}, and \eqref{AnBn} we have
    \begin{align*}
        (\Phi\circ \Lambda) (A) (\Phi\circ \Lambda) (B) &= (P_{h(n)}T(\mathcal{U}^{h_1(n)}A)P_{h(n)})\cdot (P_{h(n)}T(\mathcal{U}^{h_1(n)}B)P_{h(n)}) 
											+ \mathcal{J}_h(\mathcal{R})\\
                                                                            &= (P_{h(n)}T(\mathcal{U}^{h_1(n)}(AB))P_{h(n)}) + \mathcal{J}_h(\mathcal{R}) = (\Phi\circ \Lambda) (AB).
    \end{align*}
Applying the homomorphism $\Phi$ to $(B_n)$ yields
    \begin{align*}
        \Phi((B_n)) &= (\Phi\circ\Lambda)(e^{A_1}\cdots e^{A_r})e^{-(\Phi\circ\Lambda)(A_r)}\cdots e^{-(\Phi\circ\Lambda)(A_1)}\\
                  &= (\Phi\circ\Lambda)(e^{A_1}\cdots e^{A_r} e^{-A_r}\cdots e^{-A_1})\\
                  &= (\Phi\circ\Lambda)(I)\\
                  &= (P_{h(n)})+ \mathcal{J}_h(\mathcal{R}),
    \end{align*}
which shows $(B_n)$ is of the form (\ref{Bn simp for SW thm}). To determine the operators $K$ and $L$, we apply the 
Banach algebra homomorphisms $\mathcal{W}_{h_1,h_2}$ and $\widetilde{\mathcal{W}_{h_1,h_2}}$ to both sides of (\ref{Bn simp for SW thm}).
\end{proof}

The second auxilliary result is the following. Its (simple) proof can be found, e.g., in  Lemmas 9.1 and 9.3 of  \cite{Eh-SzW}.

\begin{lem}\label{det iden}
Let $A_n = P_n + P_nKP_n +W_nLW_n +C_n$ with
    \[K,L\in \mathcal{C}_1(l^2(\Zp))\text{ , } C_n\in \mathcal{C}_1(\mathrm{im} P_n) \text{ and } \|C_n\|_{\mathcal{C}_1}\to 0 \text{ as } n\to\infty.\]
Then
    \begin{equation}
        \lim_{n\to\infty} \det A_n = \det(P+K)\det(P+L).
    \end{equation}
\end{lem}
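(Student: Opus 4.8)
The plan is to reduce everything to a Fredholm determinant on $l^2(\Zp)$ and then exploit the spatial separation between the ``left'' part $P_nKP_n$ (concentrated near index $0$) and the ``right'' part $W_nLW_n$ (concentrated near index $n$). First I would observe that on $\mathrm{im}\,P_n$ the operator $P_n$ acts as the identity $I_n$, so that $A_n=I_n+B_n$ with $B_n:=P_nKP_n+W_nLW_n+C_n$. Each summand maps $\mathrm{im}\,P_n$ into itself and vanishes on $(\mathrm{im}\,P_n)^\perp$ (for $W_nLW_n$ this uses $W_n=P_nW_n$), so extending by the identity does not change the determinant and gives $\det A_n=\det_{l^2(\Zp)}(I+P_nKP_n+W_nLW_n+C_n)$. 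Since $\|B_n\|_{\mathcal{C}_1}\le\|K\|_{\mathcal{C}_1}+\|L\|_{\mathcal{C}_1}+\sup_n\|C_n\|_{\mathcal{C}_1}$ is uniformly bounded and $\|C_n\|_{\mathcal{C}_1}\to0$, the standard Lipschitz estimate $|\det(I+X)-\det(I+Y)|\le\|X-Y\|_{\mathcal{C}_1}\exp(1+\|X\|_{\mathcal{C}_1}+\|Y\|_{\mathcal{C}_1})$ lets me discard $C_n$, reducing the claim to $\det_{l^2(\Zp)}(I+P_nKP_n+W_nLW_n)\to\det(I+K)\det(I+L)$.

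Next I would factor. Put $X_n:=I+P_nKP_n$ and $Y_n:=I+W_nLW_n$; then $X_nY_n=I+P_nKP_n+W_nLW_n+P_nKW_nLW_n$, where I used $P_nW_n=W_n$ to simplify the cross term. The main obstacle, and the heart of the argument, is to show that this cross term tends to $0$ in trace norm. Note that it does \emph{not} follow from any operator-norm smallness of $KW_n$, since $W_n\to0$ only weakly and preserves norms. Instead I would argue by truncation: given $\varepsilon>0$ choose $R$ with $\|K-P_RKP_R\|_{\mathcal{C}_1}<\varepsilon$ and $\|L-P_RLP_R\|_{\mathcal{C}_1}<\varepsilon$, and observe that for $n\ge 2R$ the supports are disjoint, i.e.\ $P_RW_nP_R=0$, so that $P_n(P_RKP_R)W_n(P_RLP_R)W_n=0$ identically. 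Bounding the difference between the true cross term and this truncated one by $\varepsilon(\|K\|_{\mathrm{op}}+\|L\|_{\mathrm{op}})$ uniformly in $n$ then gives $\|P_nKW_nLW_n\|_{\mathcal{C}_1}\to0$. Applying the determinant-continuity estimate once more yields $\det(I+P_nKP_n+W_nLW_n)=\det(X_nY_n)+o(1)=\det X_n\,\det Y_n+o(1)$.

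Finally I would evaluate the two factors. Since $P_n\to I$ strongly and $K\in\mathcal{C}_1$, one has $P_nKP_n\to K$ in trace norm (reduce to finite rank $K$), so $\det X_n\to\det(I+K)=\det(P+K)$. For $Y_n$ I would use that $W_n$ restricted to $\mathrm{im}\,P_n$ is a self-adjoint involution, hence unitary there; conjugating by $W_n$ and using $W_n^2=P_n$ gives $\det_{\mathrm{im}\,P_n}(I_n+W_nLW_n)=\det_{\mathrm{im}\,P_n}(I_n+P_nLP_n)=\det_{l^2(\Zp)}(I+P_nLP_n)$, which converges to $\det(I+L)=\det(P+L)$ by the same trace-norm approximation. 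Combining the two limits gives $\det A_n\to\det(P+K)\det(P+L)$, as desired.
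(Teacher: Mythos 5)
Your proof is correct. The paper does not actually reproduce a proof of this lemma but refers to Lemmas 9.1 and 9.3 of \cite{Eh-SzW}, and your argument is essentially that standard one: extend by the identity to $l^2(\Zp)$, discard $C_n$ by trace-norm continuity of the determinant, factor as $(I+P_nKP_n)(I+W_nLW_n)$ modulo a cross term that vanishes in trace norm, and evaluate the two factors via $P_nKP_n\to K$ in $\mathcal{C}_1$ and conjugation by the involution $W_n$. The only (cosmetic) difference is your treatment of the cross term: you use a truncation and disjoint-support argument ($P_RW_nP_R=0$ for $n\ge 2R$), whereas the usual argument notes that $W_n\to 0$ weakly and $K$ compact imply $KW_n\to 0$ strongly, hence $KW_nL\to 0$ in trace norm after approximating $L$ by finite-rank operators; both are valid and of comparable length.
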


Now we are able to state the first of our main results.

\begin{thm}[Abstract version of the limit theorem] \label{MainResult-0}
Let $\mathcal{R}$ be a rigid, suitable, shift-invariant and unital Banach subalgebra of $\mathcal{L}(l^2(\Z))$,
 and let $h_1$ and $h_2$ be  fractal sequences for $\mathcal{R}$ with associated $\mathcal{U}_1$ and $\mathcal{U}_2$, respectively, such that $h(n) := h_2(n) - h_1(n)>0$ and $h(n)\to \infty$ as $n\to\infty$. 

If $A_1, \dots, A_r\in \mathcal{R}$ and
    $A = e^{A_1}\cdots e^{A_r}$, 
then
    \begin{equation}\label{f:5.7}
        \lim_{n\to\infty} \frac{\det(P_{h_1(n), h_2(n)}AP_{h_1(n), h_2(n)})}{\exp(\trace (P_{h_1(n), h_2(n)}(A_1+\cdots+A_r)P_{h_1(n), h_2(n)}))}
            = \det(\mathcal{B}_1)\det(\mathcal{B}_2),
    \end{equation}
where
    \begin{align}\label{A1A2}
        \mathcal{B}_1 &= T(\mathcal{U}_1A)\cdot e^{-T(\mathcal{U}_1A_r)}\cdots e^{-T(\mathcal{U}_1A_1)},\\
        \mathcal{B}_2 &= e^{T\left(\mathcal{U}_2A_1\right)}\cdots e^{T\left(\mathcal{U}_2A_r\right)} \cdot T\left(\mathcal{U}_2(A^{-1})\right).\label{A1A2 ver2}
    \end{align}
\end{thm}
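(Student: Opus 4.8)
The plan is to recognize the ratio on the left-hand side of \eqref{f:5.7} as the determinant of precisely the sequence $(B_n)$ studied in Proposition \ref{prop Bn}, and then to evaluate its limit via Lemma \ref{det iden}. First I would rewrite both numerator and denominator as determinants of finite matrices acting on $\mathrm{im}\,P_{h(n)}$. By \eqref{det-2} the numerator equals $\det P_{h(n)}T(\mathcal{U}^{h_1(n)}A)P_{h(n)}$. For the denominator I would use \eqref{tr-2} together with additivity of the trace to write the exponent as $\sum_{k=1}^r \trace P_{h(n)}T(\mathcal{U}^{h_1(n)}A_k)P_{h(n)}$, and then invoke the finite-dimensional identity $\det e^{X}=e^{\trace X}$ for each $k$. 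Writing $X_k:=P_{h(n)}T(\mathcal{U}^{h_1(n)}A_k)P_{h(n)}$, this turns the denominator into $\prod_{k=1}^r \det e^{X_k}=\det\!\big(e^{X_1}\cdots e^{X_r}\big)$, which is nonzero. Hence the ratio is well defined and, by multiplicativity of the determinant, equals
$$\det\Big(P_{h(n)}T(\mathcal{U}^{h_1(n)}A)P_{h(n)}\,e^{-X_r}\cdots e^{-X_1}\Big)=\det B_n,$$
where $B_n$ is exactly the sequence defined in \eqref{Bn for SW thm} (using that $\mathcal{U}^{h_1(n)}$ is multiplicative, so that $\mathcal{U}^{h_1(n)}(e^{A_1}\cdots e^{A_r})=\mathcal{U}^{h_1(n)}A$).

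Next, Proposition \ref{prop Bn} gives the decomposition $B_n=P_{h(n)}+P_{h(n)}KP_{h(n)}+W_{h(n)}LW_{h(n)}+G_n$ with $\|G_n\|_{\mathcal{C}_1}\to0$, so Lemma \ref{det iden} immediately yields $\lim_{n\to\infty}\det B_n=\det(P+K)\det(P+L)$. It then remains to identify the two factors. The operator $P+K$ is given by Proposition \ref{prop Bn} as $T(\mathcal{U}_1(e^{A_1}\cdots e^{A_r}))e^{-T(\mathcal{U}_1A_r)}\cdots e^{-T(\mathcal{U}_1A_1)}$, which, since $\mathcal{U}_1$ is multiplicative, is exactly $\mathcal{B}_1$; thus $\det(P+K)=\det\mathcal{B}_1$ with no further work.

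The one step that needs genuine input is the identification $\det(P+L)=\det\mathcal{B}_2$, and here Proposition \ref{prop det iden} is the crucial tool. By Proposition \ref{prop Bn} we have $P+L=T(\widetilde{\mathcal{U}_2 A})\,e^{-T(\widetilde{\mathcal{U}_2A_r})}\cdots e^{-T(\widetilde{\mathcal{U}_2A_1})}$. Setting $B_k:=\mathcal{U}_2 A_k$ and using that both $\mathcal{U}_2$ and the map $X\mapsto\widetilde X=JXJ$ (multiplicative since $J^2=I$) commute with the exponential, one has $\widetilde{\mathcal{U}_2 A}=e^{\widetilde{B_1}}\cdots e^{\widetilde{B_r}}$, so that $P+L$ is precisely the operator $f(1,\dots,1)$ appearing in Proposition \ref{prop det iden} for the operators $B_1,\dots,B_r$. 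That proposition then gives
$$\det(P+L)=\det e^{T(B_1)}\cdots e^{T(B_r)}\,T\!\big(e^{-B_r}\cdots e^{-B_1}\big).$$
Finally, because $A^{-1}=e^{-A_r}\cdots e^{-A_1}$ and $\mathcal{U}_2$ is multiplicative, I would rewrite $e^{-B_r}\cdots e^{-B_1}=\mathcal{U}_2(A^{-1})$, which turns the right-hand side into $\det\mathcal{B}_2$.

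Combining $\det(P+K)=\det\mathcal{B}_1$ and $\det(P+L)=\det\mathcal{B}_2$ with the limit above completes the proof. I expect the main obstacle to be precisely the bookkeeping in this last paragraph: correctly tracking the order reversal and the interplay of the tilde and shift operations so that the abstract determinant identity of Proposition \ref{prop det iden} lines up with the definition of $\mathcal{B}_2$. Everything else is a direct assembly of \eqref{det-2}, \eqref{tr-2}, Proposition \ref{prop Bn}, and Lemma \ref{det iden}.
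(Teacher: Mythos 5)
Your proposal is correct and follows essentially the same route as the paper's own proof: both recognize the ratio as $\det B_n$ for the sequence $(B_n)$ of Proposition \ref{prop Bn} (via \eqref{det-2}, \eqref{tr-2}, and the finite-dimensional identity $\det e^X = e^{\trace X}$), apply Lemma \ref{det iden} to get $\det(P+K)\det(P+L)$, and then invoke Proposition \ref{prop det iden} with $B_k=\mathcal{U}_2A_k$ to convert $\det(P+L)$ into $\det\mathcal{B}_2$. Your write-up is in fact slightly more explicit than the paper's in the last step, where the paper merely cites Proposition \ref{prop det iden} without spelling out the substitution.
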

\begin{proof}
As pointed out in \eqref{det-2},
        \begin{align*}
            \det(P_{h_1(n), h_2(n)}AP_{h_1(n), h_2(n)}) = \det (P_{h(n)}T(\mathcal{U}^{h_1(n)}A)P_{h(n)}),
        \end{align*}
and therefore it suffices to consider the latter. Let $(B_n)$ be the sequence defined by (\ref{Bn for SW thm}). Taking the determinant, we have
        \begin{align*}
      \lefteqn{       \det B_n}
      \\
       &= \det \left(P_{h(n)}T(\mathcal{U}^{h_1(n)}(A))P_{h(n)}\cdot e^{-P_{h(n)}T(\mathcal{U}^{h_1(n)}A_r)P_{h(n)}}\cdots
                    e^{-P_{h(n)}T(\mathcal{U}^{h_1(n)}A_1)P_{h(n)}}\right)\\
            &= \det \left(P_{h(n)}T(\mathcal{U}^{h_1(n)}(A))P_{h(n)}\right)\cdot e^{-\trace ( P_{h(n)}T(\mathcal{U}^{h_1(n)}A_r)P_{h(n)})}\cdots
                    e^{-\trace (P_{h(n)}T(\mathcal{U}^{h_1(n)}A_1)P_{h(n)})}\\
            &= \det \left(P_{h(n)}T(\mathcal{U}^{h_1(n)}(A))P_{h(n)}\right)\cdot e^{-\trace  (P_{h(n)}T(\mathcal{U}^{h_1(n)}(A_1+\cdots+A_r))P_{h(n)}) }\\
            &= \det \left(P_{h(n)}T(\mathcal{U}^{h_1(n)}(A))P_{h(n)}\right)\cdot e^{-\trace (P_{h_1(n), h_2(n)}(A_1+\cdots+A_r)P_{h_1(n), h_2(n)})}.
        \end{align*}
        In the last step we used \eqref{tr-2}.
By Proposition \ref{prop Bn}, $B_n$ is of the form (\ref{Bn simp for SW thm}), where
        \begin{align*}
            \det(P+K) &= \det \left(T(\mathcal{U}_1A)\cdot e^{-T(\mathcal{U}_1A_r)}\cdots e^{-T(\mathcal{U}_1A_1)}\right),\\
            \det(P+L) &= \det \left(T(\widetilde{\mathcal{U}_2A})\cdot e^{-T(\widetilde{\mathcal{U}_2A_r})}\cdots e^{-T(\widetilde{\mathcal{U}_2A_1})}\right).
        \end{align*}
We apply Lemma \ref{det iden} to obtain the limit \eqref{f:5.7}.
Using Proposition \ref{prop det iden} the second operator determinant can be rewritten as
        \begin{equation*}
            \det \left(e^{T\left(\mathcal{U}_2A_1\right)}\cdots e^{T\left(\mathcal{U}_2A_r\right)} \cdot T\left(\mathcal{U}_2(A^{-1})\right)\right),
        \end{equation*}
and both determinants together yield the well-defined constant $\det(\mathcal{B}_1)\det(\mathcal{B}_2)$.
\end{proof}

\bigskip

In what follows we will apply the previous theorem to the Banach algebra 
$\mathcal{R}=\mathcal{W}_{\alpha_1,\alpha_2}(\mathcal{A})$ with $\mathcal{A} = APW(\Z,\Xi,\beta)$, where $\beta$ is an admissible and compatible weight 
on the additive subgroup $\Xi$ of $\R/\Z$, and $\alpha_1,\alpha_2\ge0$, $\alpha_1+\alpha_2=1$.
Note that  Corollary \ref{c3.4}(i) implies that $\cR$ is a rigid, suitable, shift-invariant, and unital Banach algebra.
Moreover, the compatibility condition will facilitate the computation of the trace by Theorem \ref{thm trace eva}.

Let us recall the notation that is used in the following theorem. First, $M(a)$ stands for the mean of an almost periodic sequence $a$  as defined in \eqref{mean.Ma}, and $D(A)\in l^\infty(\Z)$ denotes the main diagonal of an operator $A\in \mathcal{L}(l^2(\Z))$.

Furthermore, Corollary \ref{c3.4}(ii) implies that if $h$ is fractal for $\Xi$ with associated $\tau\in\HomXT$,
then $h$ is fractal for $\cR$ with associated $\mathcal{U}^\tau$ given by
\begin{equation}\label{mapU2}
\cU^\tau:\cR\to\cR,\quad \sum_{n\in\Z} \sum_{\xi\in \Xi} a^{(n)}_\xi (e_\xi I)U_n \mapsto
\sum_{n\in\Z} \sum_{\xi\in \Xi} a^{(n)}_\xi \tau(\xi) (e_\xi I)U_n.
\end{equation}
In fact,  it is easy to see that $\mathcal{U}^\tau$ is well-defined for all $\tau\in \HomXT$ and is a unital isometric
Banach algebra isomorphism on $\cR$ (see also Proposition \ref{Theta is cont}(a) below).

Therefore, for any $\tau\in \rm{Hom}(\Xi, \mathbb{T})$ and any $A\in \mathcal{R}$ of the form
	\[A = e^{A_1}\cdots e^{A_r}\enskip\]
with $ A_1, \dots, A_r\in \mathcal{R}$, it is possible to define the constants
	\begin{align}\label{Theta_1}
		\Theta_{A,1}(\tau) &= \exp\left(\sum_{\xi\in\Xi,\xi\neq 0}a_\xi \frac{\tau(\xi)}{1-e^{2\pi i \xi}}\right) \cdot 
			\det\left(T(\mathcal{U}^{\tau}A)e^{-T(\mathcal{U}^{\tau}A_r)}\cdots e^{-T(\mathcal{U}^{\tau}A_1)}\right),\\ 
		\label{Theta_2}
		\Theta_{A,2}(\tau) &= \exp\left(\sum_{\xi\in\Xi,\xi\neq 0}a_\xi \frac{-\tau(\xi)}{1-e^{2\pi i \xi}}\right) \cdot 
			\det\left(e^{T(\mathcal{U}^{\tau}A_1)}\cdots e^{T(\mathcal{U}^{\tau}A_r)}T(\mathcal{U}^{\tau}A^{-1})\right),
	\end{align}
where
\begin{equation}\label{def.a.axi}
	a = \sum_{\xi\in\Xi} a_\xi e_\xi= D(A_1+\cdots + A_r)\in\cA.
\end{equation}
Observe that the operator determinants are well-defined by Proposition \ref{P plus trace class}.
Since $\beta$ is compatible on $\Xi$, the sums are finite as shown in Theorem \ref{thm trace eva}.
Note that the definition of the constants depends also on the choice of $A_1,\dots, A_r$, 
which is omitted in the notation.

\begin{thm}[Fractal version of the limit theorem]
\label{MainResult-1}
Let $\beta$ be an admissible and compatible weight on an additive subgroup $\Xi$ of $\R/\Z$. Let $\mathcal{R} = \mathcal{W}_{\alpha_1,\alpha_2}(\mathcal{A})$ with $\alpha_1, \alpha_2\ge 0$, $\alpha_1 +\alpha_2 = 1$, and $\mathcal{A} = APW(\Z,\Xi,\beta)$. 
Suppose that  $h_1$ and $h_2$ are  fractal sequences for $\Xi$ with associated $\tau_1$ and 
$\tau_2$ in $\mathrm{Hom}(\Xi, \mathbb{T})$, respectively,
such that $h(n) := h_2(n) - h_1(n)>0$ and $h(n)\to \infty$ as $n\to\infty$.

 If $A_1,\dots, A_r\in \mathcal{R}$ and
    $A = e^{A_1}\cdots e^{A_r}$,
then
\begin{equation}
      \lim_{n\to\infty} \frac{\det(P_{h_1(n), h_2(n)}AP_{h_1(n), h_2(n)})}{G^{h_2(n)- h_1(n)}} = \Theta_{A,1}(\tau_1) \Theta_{A,2}(\tau_2),    
\end{equation}
where
\begin{equation}\label{f5.12}
G=\exp(M(a)), \quad a = D(A_1+\cdots + A_r).
\end{equation}
\end{thm}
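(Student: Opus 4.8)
The plan is to obtain the claimed limit by combining the abstract version of the limit theorem (Theorem~\ref{MainResult-0}) with the trace evaluation (Theorem~\ref{thm trace eva}), connecting the two through the elementary factorization
\[
\frac{\det(P_{h_1(n),h_2(n)}AP_{h_1(n),h_2(n)})}{G^{h(n)}}
=\frac{\det(P_{h_1(n),h_2(n)}AP_{h_1(n),h_2(n)})}{\exp(t_n)}\cdot\frac{\exp(t_n)}{G^{h(n)}},
\]
where $t_n:=\trace(P_{h_1(n),h_2(n)}(A_1+\cdots+A_r)P_{h_1(n),h_2(n)})$. I would show that each factor converges separately and then identify the product of the two limits with $\Theta_{A,1}(\tau_1)\Theta_{A,2}(\tau_2)$.

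First I would verify the hypotheses of Theorem~\ref{MainResult-0}. By Corollary~\ref{c3.4}(i) the algebra $\mathcal{R}=\mathcal{W}_{\alpha_1,\alpha_2}(\mathcal{A})$ is rigid, suitable, shift-invariant and unital, and by Corollary~\ref{c3.4}(ii) the sequences $h_1,h_2$, being fractal for $\Xi$ with associated $\tau_1,\tau_2$, are fractal for $\mathcal{R}$ with associated isometric homomorphisms $\mathcal{U}_1=\mathcal{U}^{\tau_1}$ and $\mathcal{U}_2=\mathcal{U}^{\tau_2}$. Theorem~\ref{MainResult-0} then applies and gives $\det(P_{\cdots}AP_{\cdots})/\exp(t_n)\to\det(\mathcal{B}_1)\det(\mathcal{B}_2)$, with $\mathcal{B}_1,\mathcal{B}_2$ as in \eqref{A1A2}--\eqref{A1A2 ver2} but with $\mathcal{U}_i$ read as $\mathcal{U}^{\tau_i}$; note that $A^{-1}=e^{-A_r}\cdots e^{-A_1}\in\mathcal{R}$, so all operator determinants are well defined by Proposition~\ref{P plus trace class}.

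Next I would evaluate the second factor. By \eqref{trace.sum} we have $t_n=\sum_{k=h_1(n)}^{h_2(n)-1}a(k)$ with $a=D(A_1+\cdots+A_r)\in\mathcal{A}$, so Theorem~\ref{thm trace eva} (applicable since $\beta$ is admissible and compatible) yields $t_n=h(n)M(a)+F_a(\tau_1)-F_a(\tau_2)+o(1)$. Exponentiating and using $G=\exp(M(a))$, this gives
\[
\frac{\exp(t_n)}{G^{h(n)}}=\exp\bigl(F_a(\tau_1)-F_a(\tau_2)+o(1)\bigr)\;\longrightarrow\;\exp\bigl(F_a(\tau_1)-F_a(\tau_2)\bigr).
\]
Since both factors in the factorization converge, their product converges to $\exp(F_a(\tau_1)-F_a(\tau_2))\det(\mathcal{B}_1)\det(\mathcal{B}_2)$. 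Comparing \eqref{C-tau} with \eqref{Theta_1}--\eqref{Theta_2}, recalling \eqref{def.a.axi} and $\mathcal{U}_i=\mathcal{U}^{\tau_i}$, one reads off $\exp(F_a(\tau_1))\det(\mathcal{B}_1)=\Theta_{A,1}(\tau_1)$ and $\exp(-F_a(\tau_2))\det(\mathcal{B}_2)=\Theta_{A,2}(\tau_2)$, which is exactly the asserted limit.

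This argument is essentially a careful assembly of two substantial prior results, so I do not expect a serious conceptual obstacle. The points that require attention are confirming that the abstractly produced homomorphisms $\mathcal{U}_1,\mathcal{U}_2$ coincide with the explicit maps $\mathcal{U}^{\tau_1},\mathcal{U}^{\tau_2}$ (so that the operator-determinant factors of $\mathcal{B}_1,\mathcal{B}_2$ match those in $\Theta_{A,1},\Theta_{A,2}$), and bookkeeping the $o(1)$ terms; the product-of-limits formulation is deliberate, as it avoids ever dividing by $\det(\mathcal{B}_1)\det(\mathcal{B}_2)$ and hence remains valid even when this quantity vanishes.
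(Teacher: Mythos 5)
Your proposal is correct and follows essentially the same route as the paper's own proof: apply Theorem~\ref{MainResult-0} (justified via Corollary~\ref{c3.4}), evaluate the trace asymptotics with Theorem~\ref{thm trace eva}, and absorb the exponentials of $F_a(\tau_1)$ and $-F_a(\tau_2)$ into $\det(\mathcal{B}_1)$ and $\det(\mathcal{B}_2)$ to form $\Theta_{A,1}(\tau_1)\Theta_{A,2}(\tau_2)$. Your explicit product-of-limits factorization and the identification $\mathcal{U}_i=\mathcal{U}^{\tau_i}$ via Corollary~\ref{c3.4}(ii) merely spell out details the paper leaves implicit.
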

\begin{proof}
Because of Corollary \ref{c3.4}(i), Theorem \ref{MainResult-0} can be applied and we are left with the evaluation of the asymptotics for
    \begin{equation*}
        \text{trace}(P_{h_1(n), h_2(n)}(A_1+\cdots+A_r)P_{h_1(n), h_2(n)}) =  \sum_{k=h_1(n)}^{h_2(n)-1}a(k)
        =\text{trace}(P_{h_1(n), h_2(n)}(aI)P_{h_1(n), h_2(n)})
    \end{equation*}
with $a$ given by \eqref{f5.12}. 
Theorem \ref{thm trace eva} can be applied and yields
 \begin{equation*}
       \sum_{k=h_1(n)}^{h_2(n)-1}a(k) = \left(h_2(n) - h_1(n)\right)\cdot M(a) + F_a(\tau_1) - F_a(\tau_2)+ o(1),\qquad n\to\infty,
\end{equation*}
where $F_a(\tau)$ is given by (\ref{C-tau}). 
Combining the exponentials of the constants $F_a(\tau_k)$ with the operator determinants $\det(\mathcal{B}_k)$ gives the constants $\Theta_{A,k}(\tau_k)$.
\end{proof}

%%%%%%%%%%%%%%%%%%%%%%%%%%%%%%%%%%%%%
%%%%%%%%%%%%%%%%%%%%%%%%%%%%%%%%%%%%%
%%%%%%%%%%%%%%%%%%%%%%%%%%%%%%%%%%%%%

\section{A uniform version of the limit theorem}
\label{sec:6}

In this section let us assume that $\mathcal{R} = \mathcal{W}_{\alpha_1, \alpha_2}(APW(\Z, \Xi, \beta))$ 
with the same assumptions as before: $\alpha_1,\alpha_2\ge 0$, $\alpha_1+\alpha_2=1$, $\Xi$ is an additive subgroup of
$\R/\Z$, and $\beta$ is an admissible and compatible weight on $\Xi$.

In this setting the constants $\Theta_{A,1}(\tau)$ and $\Theta_{A,2}(\tau)$
are well-defined for all $\tau\in\HomXT$. For fixed $A=e^{A_1}\cdots e^{A_r}$ with $A_1,\dots A_r\in\cR$ 
these quantities can be considered as functions on $\HomXT$.
The goal of the following two propositions is to show that these functions are continuous on  $\HomXT$.
After this we will derive the uniform version of the limit theorem.

Recall that $\mathrm{Hom}(\Xi, \mathbb{T})$ is a compact topological space with the topology determined by the local bases of the form
\begin{equation}\label{local bases}
U_{\xi_1,\dots,\xi_N;\varepsilon}[\tau]=\Big\{\, \tau'\in\HomXT \,:\, |\tau'(\xi_k)-\tau(\xi_k)|<\varepsilon \mbox{ for all } 1\le k\le N\,\Big\},
\end{equation}
where $\varepsilon>0$, $N\in\mathbb{N}$, $\xi_1,\dots,\xi_N\in\Xi$.

\begin{prop}\label{C is cont}
Let $\beta$ be an admissible weight on an additive subgroup $\Xi$ of $\R/\Z$. For each fixed $a=\sum\limits_{\xi\in\Xi} a_\xi e_\xi \in \mathcal{A}= APW(\Z, \Xi, \beta)$,  
\begin{enumerate}[label=(\alph*)]
\item the function $\psi_a:\HomXT\to\cA$ defined by
	\begin{equation}
		\psi_a(\tau) = U^\tau a := \sum_{\xi\in\Xi} a_\xi \tau(\xi) e_\xi
			\end{equation}
is well-defined and continuous;
\item 
the function $F_a: \HomXT\to \C$ defined by
$$
F_a(\tau)=\sum_{\xi\in\Xi,\xi\neq0} a_\xi\,\frac{\tau(\xi)}{1-e^{2\pi i \xi}}
$$
is well-defined and continuous provided the weight $\beta$ is compatible.
\end{enumerate}
\end{prop}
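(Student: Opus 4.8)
The plan is to handle both parts by a single mechanism: write each function as the uniform limit over $\tau\in\HomXT$ of its finite partial sums, observe that these partial sums are manifestly continuous, and then invoke the fact that a uniform limit of continuous maps into a Banach space (the target $\cA$ in (a)) or into $\C$ (in (b)) is continuous. The one feature that makes the convergence uniform in $\tau$ is that $|\tau(\xi)|=1$ for every $\tau\in\HomXT$ and every $\xi\in\Xi$, so the modulus of each summand is independent of $\tau$.

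First I would record the only topological input needed. By the description of the local bases \eqref{local bases}, the topology of $\HomXT$ is that of pointwise convergence on $\Xi$; in particular, for each fixed $\xi\in\Xi$ the evaluation $\tau\mapsto\tau(\xi)$ is continuous on $\HomXT$. Consequently, for any finite subset $F$ of the (at most countable) support of $a$, the partial sums
\[
\psi_a^{(F)}(\tau)=\sum_{\xi\in F} a_\xi\,\tau(\xi)\,e_\xi
\qquad\text{and}\qquad
F_a^{(F)}(\tau)=\sum_{\xi\in F,\,\xi\neq0} a_\xi\,\frac{\tau(\xi)}{1-e^{2\pi i\xi}}
\]
are continuous, being finite linear combinations of the continuous scalar functions $\tau\mapsto\tau(\xi)$ with the fixed coefficients $a_\xi e_\xi\in\cA$, respectively $a_\xi/(1-e^{2\pi i\xi})\in\C$.

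Next I would prove well-definedness together with uniform convergence. For (a), since $|\tau(\xi)|=1$ one gets $\|\psi_a(\tau)\|_{\Xi,\beta}=\sum_\xi\beta(\xi)|a_\xi|=\|a\|_{\Xi,\beta}<\infty$, so $\psi_a(\tau)\in\cA$, and
\[
\|\psi_a(\tau)-\psi_a^{(F)}(\tau)\|_{\Xi,\beta}=\sum_{\xi\notin F}\beta(\xi)|a_\xi|,
\]
a quantity independent of $\tau$ that can be made arbitrarily small by enlarging $F$, since $\sum_\xi\beta(\xi)|a_\xi|<\infty$. This is a Weierstrass $M$-test with $\tau$-independent majorant, hence $\psi_a^{(F)}\to\psi_a$ uniformly on $\HomXT$. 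For (b), the compatibility estimate \eqref{estimate-beta} gives $|a_\xi|/|1-e^{2\pi i\xi}|\le\beta(\xi)|a_\xi|/(4C_\beta)$, so $F_a(\tau)$ converges absolutely (as already noted in the proof of Theorem \ref{thm trace eva}) and
\[
|F_a(\tau)-F_a^{(F)}(\tau)|\le\frac{1}{4C_\beta}\sum_{\xi\notin F}\beta(\xi)|a_\xi|,
\]
again a $\tau$-independent summable tail, yielding uniform convergence.

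Finally, since a uniform limit of continuous functions is continuous, both $\psi_a$ and $F_a$ are continuous, which completes the argument. I do not anticipate a genuine obstacle here: the whole proof rests on the single observation that unimodularity $|\tau(\xi)|=1$ decouples the majorant from $\tau$. The only place an extra hypothesis enters is part (b), where the compatibility of $\beta$ is precisely what converts the potentially divergent factor $1/|1-e^{2\pi i\xi}|$ into the summable majorant $\beta(\xi)|a_\xi|/(4C_\beta)$; without it the series defining $F_a$ need not even be well-defined, let alone continuous.
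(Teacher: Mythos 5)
Your proof is correct and takes essentially the same route as the paper: the paper's own argument is the same split into a finite set $S$ (on which closeness of $\tau$ and $\tau'$ is enforced by a local-base neighborhood) plus a $\tau$-independent tail bounded by $\sum_{\xi\notin S}\beta(\xi)|a_\xi|$, using $|\tau(\xi)|=1$ and, for part (b), the compatibility estimate \eqref{estimate-beta}, only written as a direct $\varepsilon$-argument instead of being packaged as uniform convergence of continuous partial sums. There is no gap; the Weierstrass $M$-test phrasing is just a repackaging of the identical estimates.
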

\begin{proof}
(a):\enskip 
Since the spectrum of $a$ is at most countable and $\sum\limits_{\xi\in \Xi}|a_\xi|\beta(\xi)<\infty$,  for any 
given $\epsilon>0$, there is a finite subset $S = \{\xi_1, \dots, \xi_n\}$ of $\Xi$, such that
		\[\sum_{\xi\in \Xi\setminus S} |a_\xi|\cdot \beta(\xi) <\frac{\epsilon}{4}.\]
For each fixed $\tau\in \mathrm{Hom}(\Xi, \mathbb{T})$, consider $U = U_{\xi_1,\cdots, \xi_n,\epsilon_0}[\tau]$ defined by (\ref{local bases})  with $\epsilon_0 = \frac{\epsilon}{2\|a\|_{\mathcal{A}}}$, and note that 
		\begin{align*}
			\|U^\tau a - U^\tp a\|_\mathcal{A} &= \sum_{\xi\in S} |a_\xi|\cdot|\tau(\xi) - \tp(\xi)|\beta(\xi)
												+ \sum_{\xi\in \Xi\setminus S } |a_\xi|\cdot|\tau(\xi) - \tp(\xi)|\beta(\xi)\\
						&< \epsilon_0 \sum_{\xi\in S}|a_\xi|\beta(\xi) +  2\sum_{\xi\in \Xi\setminus S}|a_\xi|\beta(\xi)\\
						&<\frac{\epsilon}{2}+\frac{\epsilon}{2} = \epsilon
		\end{align*}
for any $\tp\in U$. It implies that $\psi_a$ is a continuous function on $\mathrm{Hom}(\Xi, \mathbb{T})$ for each fixed $a$.

(b):\enskip	
Similarly, for any given $\epsilon>0$, there exists a finite subset $S = \{\xi_1, \cdots, \xi_n\}$ of $\Xi$, such that
		\[\sum_{\xi\in \Xi\setminus S} |a_\xi|\cdot \beta(\xi) <C_\beta\epsilon,\]
where $C_\beta$ is the constant given in (\ref{compat.cond}). For each fixed $\tau\in \mathrm{Hom}(\Xi, \mathbb{T})$, consider the open neighborhood $U = U_{\xi_1,\cdots, \xi_n,\epsilon_0}[\tau]$ with $\epsilon_0 = \frac{C_\beta \epsilon}{\|a\|_{\mathcal{A}}}$. Then, for any $\tp\in U$, using estimate \eqref{estimate-beta}
		\begin{align*}
			|F_a(\tau) - F_a(\tp)| &\le \sum_{\xi\in S, \xi\neq 0} |a_\xi| \frac{|\tau(\xi) - \tp(\xi)|}{|1-e^{2\pi i\xi}|} + \sum_{\xi \in \Xi \setminus S,\xi\neq 0}  |a_\xi| \frac{|\tau(\xi) - \tp(\xi)|}{|1-e^{2\pi i\xi}|}\\
								&\le \frac{1}{4C_\beta}\sum_{\xi\in S, \xi\neq 0} |a_\xi|\beta(\xi)\cdot |\tau(\xi)-\tp(\xi)| + \frac {1}{2C_\beta} \sum_{\xi\in \Xi \setminus S} |a_\xi| \beta(\xi)\\
								&< \frac{\epsilon}{2}  +  \frac{\epsilon}{2}  = \epsilon.
		\end{align*}
Since the choice of $\epsilon$ and $\tau$ is arbitrary,  $F_a$ is a continuous function on $\mathrm{Hom}(\Xi, \mathbb{T})$.
\end{proof}

\begin{prop}\label{Theta is cont}
 Let $\beta$ be an admissible weight on an additive subgroup $\Xi$ of $\R/\Z$. Let $\mathcal{R} = \mathcal{W}_{\alpha_1,\alpha_2}(\mathcal{A})$ with $\alpha_1, \alpha_2\ge 0$, $\alpha_1 +\alpha_2 = 1$, and $\mathcal{A} = APW(\Z,\Xi,\beta)$. 
\begin{enumerate}[label=(\alph*)]
\item 
For each fixed $A\in \mathcal{R}$, the function $\Psi_A(\tau)$ defined by
\begin{equation}
\Psi_A: \mathrm{Hom}(\Xi, \mathbb{T})\to \mathcal{R}, \enskip \tau \mapsto \mathcal{U}^\tau A
\end{equation}
is well-defined and continuous. 

\item 
For each fixed $A = e^{A_1}\cdots e^{A_r}$ with  $A_1, \dots, A_r\in \mathcal{R}$, the functions 
\begin{align*}
{\Delta}_{A, 1}(\tau) 
&= 
\det\left(T(\mathcal{U}^{\tau}A)e^{-T(\mathcal{U}^{\tau}A_r)}\cdots e^{-T(\mathcal{U}^{\tau}A_1)}\right), 
\\[1ex]
{\Delta}_{A, 2}(\tau) 
&= 
\det\left(e^{T(\mathcal{U}^{\tau}A_1)}\cdots e^{T(\mathcal{U}^{\tau}A_r)}T(\mathcal{U}^{\tau}A^{-1})\right)
\end{align*}
are well-defined and continuous on $\mathrm{Hom}(\Xi, \mathbb{T})$. 

\item
For each fixed $A = e^{A_1}\cdots e^{A_r}$ with  $A_1, \dots, A_r\in \mathcal{R}$, the functions 
 $\Theta_{A, 1}(\tau)$ and $\Theta_{A, 2}(\tau)$ given by (\ref{Theta_1}) and (\ref{Theta_2}) 
 are well-defined and continuous on $\mathrm{Hom}(\Xi, \mathbb{T})$ provided that the weight $\beta$ is compatible. 
\end{enumerate}
\end{prop}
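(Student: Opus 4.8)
The plan is to establish the three parts in order, reducing (c) to (b) and reducing the continuity in (b) to a single trace-norm estimate modeled on the proof of Proposition~\ref{P plus trace class}. For part~(a), I would write $A=\sum_{n\in\Z}(a^{(n)}I)U_n$ with $a^{(n)}\in\cA$ and $\sum_{n}\alpha(n)\|a^{(n)}\|_{\cA}<\infty$, so that by \eqref{mapU2} one has $\mathcal{U}^\tau A=\sum_{n}\bigl(\psi_{a^{(n)}}(\tau)I\bigr)U_n$, where $\psi_{a^{(n)}}$ is the map of Proposition~\ref{C is cont}(a). Each $\psi_{a^{(n)}}$ is isometric (as $|\tau(\xi)|=1$) and continuous, so $\|\mathcal{U}^\tau A-\mathcal{U}^{\tp}A\|_{\cR}=\sum_{n}\alpha(n)\|\psi_{a^{(n)}}(\tau)-\psi_{a^{(n)}}(\tp)\|_{\cA}$. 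Splitting off the tail $\sum_{|n|>N}$, bounded by $2\sum_{|n|>N}\alpha(n)\|a^{(n)}\|_{\cA}$, and invoking continuity of the finitely many remaining $\psi_{a^{(n)}}$ on a neighborhood \eqref{local bases} of $\tau$ gives continuity; this is exactly the argument of Proposition~\ref{C is cont}(a) carried out under the outer weighted sum over diagonals.

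For part~(b), well-definedness is immediate from Proposition~\ref{P plus trace class} applied to $\mathcal{U}^\tau A_1,\dots,\mathcal{U}^\tau A_r\in\cR$: since $\mathcal{U}^\tau$ is a unital isometric Banach-algebra isomorphism, $\mathcal{U}^\tau A=e^{\mathcal{U}^\tau A_1}\cdots e^{\mathcal{U}^\tau A_r}$ and $\mathcal{U}^\tau(A^{-1})=(\mathcal{U}^\tau A)^{-1}$, so the arguments of $\Delta_{A,1}$ and $\Delta_{A,2}$ are of the form $P+(\text{trace class})$. For continuity, the key is the factorization
\[
T(e^{B_1}\cdots e^{B_r})e^{-T(B_r)}\cdots e^{-T(B_1)}-P=\Bigl[T(e^{B_1}\cdots e^{B_r})-e^{T(B_1)}\cdots e^{T(B_r)}\Bigr]e^{-T(B_r)}\cdots e^{-T(B_1)},
\]
valid since $e^{T(B_1)}\cdots e^{T(B_r)}e^{-T(B_r)}\cdots e^{-T(B_1)}=P$. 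Granting the \emph{Claim} that $(B_1,\dots,B_r)\mapsto T(e^{B_1}\cdots e^{B_r})-e^{T(B_1)}\cdots e^{T(B_r)}$ is continuous from $\cR^{r}$ into $\mathcal{C}_1(l^2(\Zp))$, the bracket is $\mathcal{C}_1$-continuous and the trailing factor is bounded and operator-norm continuous, so their product is $\mathcal{C}_1$-continuous; composing with $\tau\mapsto(\mathcal{U}^\tau A_1,\dots,\mathcal{U}^\tau A_r)$ from part~(a) and with the continuity of $\det$ on $P+\mathcal{C}_1$ (see \cite{gohberg2013classes}) yields continuity of $\Delta_{A,1}$. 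Applying the same factorization to $(-B_r,\dots,-B_1)$, together with $\mathcal{U}^\tau(A^{-1})=e^{-B_r}\cdots e^{-B_1}$, handles $\Delta_{A,2}$.

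The main work, and the step I expect to be the obstacle, is the Claim, which I would prove by two telescopings based on the identity $T(XY)=T(X)T(Y)+H(X)H(\widetilde{Y})$ in \eqref{T-id}. First, $T(e^{B_1}\cdots e^{B_r})-T(e^{B_1})\cdots T(e^{B_r})$ expands as a finite sum over $1\le j\le r-1$ of terms $T(e^{B_1})\cdots T(e^{B_{j-1}})\,H(e^{B_j})\,H(\widetilde{e^{B_{j+1}}\cdots e^{B_r}})$; by \eqref{suitable} the map $(X,Y)\mapsto H(X)H(\widetilde{Y})$ is a bounded bilinear map into $\mathcal{C}_1$, and since $B\mapsto e^{B}$ and multiplication are continuous on $\cR$, each such term is $\mathcal{C}_1$-continuous. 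Second, for a single factor I would write $T(e^{B})-e^{T(B)}=\sum_{m\ge1}\frac{1}{m!}\bigl(T(B^m)-T(B)^m\bigr)$ and use the recursion $T(B^m)-T(B)^m=T(B)\bigl(T(B^{m-1})-T(B)^{m-1}\bigr)+H(B)H(\widetilde{B^{m-1}})$ to obtain $\|T(B^m)-T(B)^m\|_{\mathcal{C}_1}\le M\,m\,\|B\|_{\cR}^{m}$; hence the series converges in $\mathcal{C}_1$ uniformly on bounded subsets of $\cR$ and, being a uniform limit of $\mathcal{C}_1$-continuous maps, is itself $\mathcal{C}_1$-continuous. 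A short telescoping then replaces $T(e^{B_1})\cdots T(e^{B_r})$ by $e^{T(B_1)}\cdots e^{T(B_r)}$ up to a $\mathcal{C}_1$-continuous error, proving the Claim. The delicate point is that no individual factor is trace class---only the prescribed combinations are---so every non-compact piece must be organized into an $H(\cdot)H(\widetilde{\cdot})$ product controlled by suitability.

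Finally, part~(c) is then immediate: by \eqref{Theta_1}, \eqref{Theta_2}, and the definition \eqref{C-tau} one has $\Theta_{A,1}(\tau)=e^{F_a(\tau)}\Delta_{A,1}(\tau)$ and $\Theta_{A,2}(\tau)=e^{-F_a(\tau)}\Delta_{A,2}(\tau)$ with $a=D(A_1+\cdots+A_r)$. When $\beta$ is compatible, $F_a$ is well-defined and continuous by Proposition~\ref{C is cont}(b), while $\Delta_{A,1}$ and $\Delta_{A,2}$ are continuous by part~(b), so each $\Theta_{A,j}$ is a product of continuous functions and hence continuous on $\HomXT$.
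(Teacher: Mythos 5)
Your proposal is correct, and parts (a) and (c) follow the paper essentially verbatim: (a) is the $\epsilon/3$-splitting over the diagonals combined with Proposition \ref{C is cont}(a), and (c) is the observation that $\Theta_{A,j}(\tau)=e^{\pm F_a(\tau)}\Delta_{A,j}(\tau)$ together with Proposition \ref{C is cont}(b). Where you genuinely diverge is part (b), the heart of the matter. The paper does not telescope anything: it works inside the Banach algebra $\mathcal{O}(\mathcal{R})$ of Section \ref{sec:4.1}, whose norm $\|T(A)+K\|_{\mathcal{O}(\mathcal{R})}=\|A\|_{\mathcal{R}}+\|K\|_{\mathcal{C}_1}$ (with the decomposition unique by rigidity) automatically dominates the trace norm of the compact part. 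Thus $\chi_A:\tau\mapsto T(\mathcal{U}^\tau A)$ is continuous into $\mathcal{O}(\mathcal{R})$ by part (a), $\eta_A:\tau\mapsto e^{-T(\mathcal{U}^\tau A)}$ is continuous because the exponential is continuous in any Banach algebra, products of continuous maps into a Banach algebra are continuous, and since $\mathcal{B}_j(\tau)-P$ is trace class by Proposition \ref{P plus trace class}, continuity of $\mathcal{B}_j$ in $\mathcal{O}(\mathcal{R})$ already yields $\mathcal{C}_1$-continuity of $\mathcal{B}_j(\tau)-P$ and hence continuity of the determinants. Your route instead re-derives the needed $\mathcal{C}_1$-continuity by hand from \eqref{T-id} and the suitability bound \eqref{suitable}: the two telescopings plus the estimate $\|T(B^m)-T(B)^m\|_{\mathcal{C}_1}\le M\,m\,\|B\|_{\mathcal{R}}^m$ are all sound (and in effect give an independent, quantitative proof of Proposition \ref{P plus trace class} at $\lambda_k=1$, so your citation of it for well-definedness is redundant). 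What the paper's approach buys is brevity — all the trace-norm bookkeeping is encapsulated once in the construction of $\mathcal{O}(\mathcal{R})$ and reused here and in Theorem \ref{thm:4.5}; what yours buys is self-containedness and explicit bounds, at the cost of duplicating that infrastructure. One small detail you should still fill in: your factorization applied to $(-B_r,\dots,-B_1)$ produces continuity of $T(\mathcal{U}^\tau(A^{-1}))\,e^{T(B_1)}\cdots e^{T(B_r)}-P$, whereas $\Delta_{A,2}$ has the factors in the opposite order; you need either the similarity argument $XY-P=X(YX-P)X^{-1}$ with invariance of the determinant, or the identity $\det(P+XY)=\det(P+YX)$, to conclude. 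This is routine but not literally "the same factorization."
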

\begin{proof}
(a):\enskip Recall that
	\[\|A\|_{\mathcal{R}} = \sum_{k\in \Z}\alpha(k)\|a^{(k)}\|_\mathcal{A} = \sum_{k\in \Z}\alpha(k)\sum_{\xi\in \Xi}|a^{(k)}_\xi|\beta(\xi),\]
where
	\[
	A =: \sum_{k\in \Z}(a^{(k)}I)U_k
	\quad\mbox{and}\quad
 a^{(k)} =: \sum_{\xi\in \Xi}a^{(k)}_\xi e_\xi.
 \]
For any given $\epsilon>0$, there exist $K\in \mathbb{N}$ and a finite subset $S = \{\xi_1, \dots, \xi_n\}$ of $\Xi$, such that
	\begin{equation*}
		\sum_{|k|\ge K} \alpha(k)\|a^{(k)}\|_\mathcal{A} < \frac{\epsilon}{6},
	\end{equation*}
and
	\begin{equation*}
		\sum_{\xi\in \Xi\setminus S}|a^{(k)}_\xi|\beta(\xi) < \frac{\epsilon}{12KM_K}
	\end{equation*}
whenever $|k|< K$, where $M_K = \max\{\alpha(k): |k|<K\}$. For a fixed $\tau\in \rm{Hom}(\Xi, \mathbb{T})$, consider the open neighborhood $U = U_{\xi_1,\cdots, \xi_n,\epsilon_0}[\tau]$ with $\epsilon_0 = \frac{\epsilon}{3\|A\|_{\mathcal{R}}}$. We have 
	\begin{align*}
		\|\mathcal{U}^\tau A - \mathcal{U}^\tp A\|_\mathcal{A} 
						&= \sum_{|k|<K} \alpha(k) \|U^\tau a^{(k)} - U^\tp a^{(k)}\|_\mathcal{A} + \sum_{|k|\ge K} \alpha(k)\|U^\tau a^{(k)} - U^\tp a^{(k)}\|_\mathcal{A}\\
						&< \sum_{|k|<K}\alpha(k) \left( \sum_{\xi\in S}|a^{(k)}_\xi|\cdot|\tau(\xi) - \tp(\xi)|\beta(\xi)  + 2\sum_{\xi\in \Xi\setminus S}|a^{(k)}_\xi|\beta(\xi)\right) + 2\cdot \frac{\epsilon}{6}\\
						&< \sum_{|k|<K}\alpha(k) \epsilon_0 \sum_{\xi\in S}|a^{(k)}_\xi|\beta(\xi) + \frac{\epsilon}{6KM}\sum_{|k|<K}\alpha(k) + \frac{\epsilon}{3}\\
						&<\frac{\epsilon}{3}+\frac{\epsilon}{3}+\frac{\epsilon}{3} = \epsilon
	\end{align*}
for any $\tp\in U$. Since the choices of $\epsilon$ and $\tau$ are arbitrary, $\Psi_A$ is a continuous function on $\rm{Hom}(\Xi, \mathbb{T})$ for each fixed $A$.

(b):\enskip It suffices to show that the two operators 
	\begin{align*}
		\mathcal{B}_1(\tau) &:=  T(\mathcal{U}^{\tau}A)e^{-T(\mathcal{U}^{\tau}A_r)}\cdots e^{-T(\mathcal{U}^{\tau}A_1)},\\
		\mathcal{B}_2(\tau) &:=  e^{T(\mathcal{U}^{\tau}A_1)}\cdots e^{T(\mathcal{U}^{\tau}A_r)}T(\mathcal{U}^{\tau}A^{-1})
	\end{align*}
are both continuous functions from $\mathrm{Hom}(\Xi, \mathbb{T})$ to $\mathcal{O}(\mathcal{R})$. Note that, 
	\[\|T(\mathcal{U}^\tau A) - T(\mathcal{U}^\tp A)\|_{\mathcal{O}(\mathcal{R})}= \|\mathcal{U}^\tau A - \mathcal{U}^\tp A\|_\mathcal{R},\]
and thus
	\begin{equation}
		\chi_A: \mathrm{Hom}(\Xi, \mathbb{T})\to \mathcal{O}(\mathcal{R}), \enskip \tau \mapsto T(U^\tau A)
	\end{equation}
is continuous for each fixed $A\in \mathcal{R}$ by part (a).  

Next, note that the exponential function is continuous in any Banach algebra, and thus
	\[\eta_A: \mathrm{Hom}(\Xi, \mathbb{T})\to \mathcal{O}(\mathcal{R}), \enskip \tau \mapsto e^{-T(U^\tau A)}\]
is continuous from $\rm{Hom}(\Xi, \mathbb{T})$ to $\mathcal{O}(\mathcal{R})$ for each fixed $A\in \mathcal{R}$. Finally, for fixed $A_1, \dots A_r\in \mathcal{R}$ and $A = e^{A_1}\cdots e^{A_r}$, 
we have that
	\begin{align*}
		\mathcal{B}_1(\tau) &= \chi_A(\tau)\eta_{A_r}(\tau)\cdots \eta_{A_1}(\tau),\\
		\mathcal{B}_2(\tau) &= \eta_{-A_1}(\tau) \cdots \eta_{-A_r}(\tau)\chi_{A^{-1}}(\tau)
	\end{align*}	
are  continuous functions from $\rm{Hom}(\Xi, \mathbb{T})$ to $\mathcal{O}(\mathcal{R})$, and in addition, $\mathcal{B}_1(\tau) - P$ and $\mathcal{B}_2(\tau)-P$ are both trace class by Proposition \ref{P plus trace class}. It follows that 
${\Delta}_{A, 1}(\tau)$ and ${\Delta}_{A,2}(\tau)$ are well-defined continuous on $\rm{Hom}(\Xi, \mathbb{T})$.

Part (c) follows directly from part (b) above and from Proposition \ref{C is cont}(b).
\end{proof}

We also need the following simple lemma.
Therein the additional assumption on $\Xi$ being at most countable is imposed.

\begin{lem}\label{frac subsequence}
	Let $\Xi$ be an at most countable additive subgroup of $\R/\Z$. For any sequence $n=\{n(k)\}_{k=1}^\infty$ of integers
	there exists a subsequence $h=\{h(k)\}_{k=1}^\iy$ of $n$ which is fractal for $\Xi$. 
\end{lem}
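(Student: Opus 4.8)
The plan is to reduce the assertion to the fact that every sequence in the compact space $\HomXT$ has a convergent subsequence, and to prove this by a standard diagonal argument that exploits the countability of $\Xi$. Recall from the discussion following \eqref{fractal-tau} that $h$ is fractal for $\Xi$ precisely when $\tau_{h(k)}\to\tau$ in $\HomXT$ for some $\tau$, where $\tau_m(\xi)=e^{2\pi i m\xi}$. Thus it suffices to extract from $\{\tau_{n(k)}\}_{k\ge1}$ a subsequence that converges pointwise on $\Xi$; the resulting limit $\tau$ then automatically lies in $\HomXT$.

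First I would enumerate the at most countable group as $\Xi=\{\xi_1,\xi_2,\xi_3,\dots\}$, and then build a nested family of subsequences. Since $e^{2\pi i n(k)\xi_1}$ lies in the compact circle $\T$, the Bolzano--Weierstrass theorem yields a subsequence $\{n_1(k)\}_k$ of $\{n(k)\}_k$ along which $e^{2\pi i n_1(k)\xi_1}$ converges. Inductively, having chosen $\{n_j(k)\}_k$, I would extract from it a further subsequence $\{n_{j+1}(k)\}_k$ along which $e^{2\pi i n_{j+1}(k)\xi_{j+1}}$ converges, again by compactness of $\T$. Passing to the diagonal $h(k):=n_k(k)$, which is a genuine subsequence of $\{n(k)\}_k$ by the nesting, I observe that for each fixed $j$ the tail $\{h(k)\}_{k\ge j}$ is a subsequence of $\{n_j(k)\}_k$, so $\lim_{k\to\iy}e^{2\pi i h(k)\xi_j}$ exists. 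Hence the limit $\tau(\xi)=\lim_{k\to\iy}e^{2\pi i h(k)\xi}$ exists for every $\xi\in\Xi$, which is exactly the assertion that $h$ is fractal for $\Xi$.

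There is essentially no serious obstacle here; the only point requiring the usual care in a diagonal argument is verifying that the single diagonal sequence works simultaneously for all the $\xi_j$, which is handled by the tail observation above. I would note as an aside that the conclusion can also be obtained more conceptually: when $\Xi$ is countable the topology of $\HomXT$ described by the neighborhoods \eqref{neigborhood} is generated by countably many subbasic sets and is therefore metrizable, so the compact group $\HomXT$ is sequentially compact and $\{\tau_{n(k)}\}$ has a convergent subsequence outright. The diagonal argument above is simply the explicit unwinding of this sequential compactness.
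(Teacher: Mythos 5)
Your proof is correct and takes essentially the same approach as the paper's: both enumerate the countable group $\Xi$, extract nested subsequences using compactness of $\T$ (Bolzano--Weierstrass) for one $\xi_j$ at a time, and pass to the diagonal $h(k)=n_k(k)$, the only cosmetic difference being that you justify convergence along the diagonal via the tail observation while the paper instead freezes the first $t-1$ terms at each stage so that $h$ is literally a subsequence of every $n_t$. Your closing aside about metrizability and sequential compactness of $\HomXT$ for countable $\Xi$ is a valid conceptual shortcut but is not needed.
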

\begin{proof}
The proof is based on a standard diagonal argument.
	Since $\Xi$ is countable we can assume that $\Xi = \{\xi_t: t\in \mathbb{N}\}$. 
Starting with $n_0:=n$ one can recursively construct a collection of integer sequences $n_t=\{n_t(k)\}_{k=1}^\infty$, $t\in\mathbb{N}$, such that $n_t$ is a subsequence of $n_{t-1}$  and such that $\{e^{2\pi i n_t(k) \xi_t}\}_{k=1}^\iy$
converges for every fixed $t\in\mathbb{N}$.
Indeed, suppose we are given $n_{t-1}$. Then we can consider $\{e^{2\pi i n_{t-1}(k) \xi_t}\}_{k=1}^\infty$ and select a 
convergent subsequence $\{e^{2\pi i n_t(k) \xi_t}\}_{k=1}^\infty$, thereby defining a subsequence $n_t$ of $n_{t-1}$. Moreover, we can keep
the first $t-1$ terms  of the sequence $n_{t-1}$ unchanged when passing to $n_t$, i.e., $n_t(k)=n_{t-1}(k)$, $k=1,\dots, t-1$.

Having defined sequences $n_t=\{n_t(k)\}_{k=1}^\infty$ for all $t\in\mathbb{N}$, we define 
$$
h(k)=:n_k(k).
$$
It is straightforward to verify that $h=\{h(k)\}_{k=1}^\infty$	is a subsequence of each sequence $n_t$, $t=0,1,\dots$, in particular of the sequence $n=n_0$. Now $h$ being a subsequence of $n_t$ implies that 
$\{e^{2\pi i h(k) \xi_t}\}_{k=1}^\infty$ converges for every fixed $t\in\mathbb{N}$. 
But this implies that $h=\{h(k)\}_{k=1}^\infty$ is fractal for $\Xi$ by the definition of fractality.
\end{proof}

For each $n\in \Z$ we define a corresponding $\tau_n\in \mathrm{Hom}(\Xi, \T)$ by
	\begin{equation}
		\tau_n(\xi):=e^{2\pi i n\xi}, \quad  \xi \in \Xi.
	\end{equation}
As a consequence of Theorem \ref{MainResult-1} we obtain the following version of the limit theorem.

\begin{thm}[Uniform version of the limit theorem]\label{MainResult-2}
Let $\beta$ be an admissible and compatible weight on an at most countable additive subgroup $\Xi$ of $\R/\Z$. 
Let $\mathcal{R} = \mathcal{W}_{\alpha_1,\alpha_2}(\mathcal{A})$ 
with $\alpha_1, \alpha_2\ge 0$, $\alpha_1 +\alpha_2 = 1$, and $\mathcal{A} = APW(\Z,\Xi,\beta)$.

If $A_1,\dots, A_r\in \mathcal{R}$ and $A = e^{A_1}\cdots e^{A_r}$,
then
\begin{equation}\label{limit6.8a}
		\lim_{n_2 - n_1\to  \infty} \left(	  \frac{\det(P_{n_1, n_2}AP_{n_1, n_2})}{G^{n_2 - n_1}} - \Theta_{A, 1}(\tau_{n_1})\Theta_{A, 2}(\tau_{n_2})		\right) = 0,
\end{equation}
where	
	\begin{equation*}
        	G=\exp(M(a)), \quad a = D(A_1+\cdots + A_r),
        \end{equation*}
and with $\Theta_{A, 1}$ and $\Theta_{A, 2}$ given by \eqref{Theta_1} and \eqref{Theta_2}.
\end{thm}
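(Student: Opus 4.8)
The plan is to argue by contradiction, reducing the uniform statement to the already-established fractal version (Theorem \ref{MainResult-1}) through a subsequence-extraction argument, with the continuity of $\Theta_{A,1}$ and $\Theta_{A,2}$ (Proposition \ref{Theta is cont}(c)) supplying the final comparison. Abbreviate
\[
\sigma_{n_1,n_2} := \frac{\det(P_{n_1,n_2}AP_{n_1,n_2})}{G^{n_2-n_1}} - \Theta_{A,1}(\tau_{n_1})\,\Theta_{A,2}(\tau_{n_2}).
\]
Negating the definition of the limit in \eqref{limit6.8a}, I would assume there is some $\varepsilon>0$ together with integer pairs $(n_1^{(k)},n_2^{(k)})$, $k\ge1$, satisfying $n_2^{(k)}-n_1^{(k)}>k$ and $|\sigma_{n_1^{(k)},n_2^{(k)}}|\ge\varepsilon$ for every $k$. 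In particular $h(k):=n_2^{(k)}-n_1^{(k)}\to\infty$, and after discarding finitely many indices we may assume $h(k)>0$ throughout.

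Next I would extract fractal subsequences. Applying Lemma \ref{frac subsequence} to the integer sequence $\{n_1^{(k)}\}_k$ produces a subsequence that is fractal for $\Xi$; applying the lemma once more to the corresponding subsequence of $\{n_2^{(k)}\}_k$ and passing to this further common subsequence yields, after relabeling, sequences $h_1=\{n_1^{(k)}\}$ and $h_2=\{n_2^{(k)}\}$ that are both fractal for $\Xi$, with associated homomorphisms $\tau_1,\tau_2\in\HomXT$, respectively. Here I use that every subsequence of a fractal sequence is again fractal with the same associated homomorphism, since the defining limit \eqref{fractal-tau} is unaffected by passing to subsequences. Crucially, $h(k)=h_2(k)-h_1(k)>0$ still tends to $\infty$, so $h_1$ and $h_2$ meet the hypotheses of Theorem \ref{MainResult-1}.

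Then I would apply the fractal version along this subsequence. Theorem \ref{MainResult-1} gives
\[
\lim_{k\to\infty}\frac{\det(P_{n_1^{(k)},n_2^{(k)}}AP_{n_1^{(k)},n_2^{(k)}})}{G^{\,n_2^{(k)}-n_1^{(k)}}}=\Theta_{A,1}(\tau_1)\,\Theta_{A,2}(\tau_2).
\]
On the other hand, fractality of $h_1$ and $h_2$ means precisely that $\tau_{n_1^{(k)}}\to\tau_1$ and $\tau_{n_2^{(k)}}\to\tau_2$ in $\HomXT$, so Proposition \ref{Theta is cont}(c) yields $\Theta_{A,1}(\tau_{n_1^{(k)}})\to\Theta_{A,1}(\tau_1)$ and $\Theta_{A,2}(\tau_{n_2^{(k)}})\to\Theta_{A,2}(\tau_2)$; multiplying, the subtracted term also converges to $\Theta_{A,1}(\tau_1)\Theta_{A,2}(\tau_2)$. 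Subtracting the two limits gives $\sigma_{n_1^{(k)},n_2^{(k)}}\to0$, contradicting $|\sigma_{n_1^{(k)},n_2^{(k)}}|\ge\varepsilon$ and completing the proof.

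Routine bookkeeping aside, the one point requiring care---and the reason both Lemma \ref{frac subsequence} and the continuity result are indispensable---is that $\tau_1,\tau_2$ are not prescribed in advance: the uniform statement ranges over all pairs with $n_2-n_1$ large, so one cannot fix a single pair of associated homomorphisms. The subsequence extraction supplies fractal sequences (hence well-defined $\tau_1,\tau_2$) for whatever offending pairs arise, while continuity of $\Theta_{A,1},\Theta_{A,2}$ on the compact space $\HomXT$ guarantees that the constant predicted by the fractal version matches the value $\Theta_{A,1}(\tau_{n_1})\Theta_{A,2}(\tau_{n_2})$ being subtracted. This interplay of compactness (through Lemma \ref{frac subsequence}) and continuity is the heart of the argument.
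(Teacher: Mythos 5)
Your proposal is correct and follows essentially the same route as the paper's own proof: the same contradiction setup, the same double application of Lemma \ref{frac subsequence} to extract common fractal subsequences, Theorem \ref{MainResult-1} applied along that subsequence, and Proposition \ref{Theta is cont}(c) to match the subtracted term. The only cosmetic difference is that you make explicit the (true and implicitly used) fact that subsequences of fractal sequences remain fractal with the same associated homomorphism.
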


\begin{proof}
For each pair of integers $(n_1, n_2)$ such that $n_2 - n_1>0$ define the quantity
$$%	 \begin{equation}
		F[n_1, n_2]:=\frac{\det(P_{n_1, n_2}AP_{n_1, n_2})}{G^{n_2 - n_1}} - \Theta_{A, 1}(\tau_{n_1})\Theta_{A, 2}(\tau_{n_2}).
$$%%	 \end{equation}
We will prove the theorem by contradiction. Assume that \eqref{limit6.8a} does not hold.
Then there exists an $\epsilon >0$ such that for each  $k\in \mathbb{N}$ there exists two integers $n_1(k)$ and $n_2(k)$ 
with $n_2(k) - n_1(k) > k$ such that
\[
|F[n_1(k), n_2(k)]| \ge \epsilon.
\]
Thus we obtain two integer sequences $\{n_1(k)\}_{k=1}^\infty$ and  $\{n_2(k)\}_{k=1}^\infty$ such that $n_2(k) - n_1(k)\to  \infty$ as $k\to \infty$. By applying Lemma \ref{frac subsequence} twice, there exists a strictly increasing sequence 
$\{k_j\}_{j=1}^\iy$ of positive integers such that $h_1=\{h_1(j)\}_{j=1}^\infty:=\{n_1(k_j)\}_{j=1}^\infty$ and $h_2=\{h_2(j)\}_{j=1}^\infty:=\{n_2(k_j)\}_{j=1}^\infty$ are both fractal sequences for $\Xi$.  
Assume that these fractal sequences $h_1$ and $h_2$ have associated $\tau_{h_1},\tau_{h_2}\in \HomXT$.
Then  $\tau_{h_i(j)}$ converges to $\tau_{h_i}$ in the natural topology of $\mathrm{Hom}(\Xi, \T)$ as $j\to\infty$
(for $i=1,2$), and therefore
$$
\Theta_{A,1}(\tau_{h_1(j)})\Theta_{A,2}(\tau_{h_2(j)})\to \Theta_{A,1}(\tau_{h_1})\Theta_{A,2}(\tau_{h_2})
$$
as $j\to\infty$ by Proposition \ref{Theta is cont}. On the other hand,
$$
   \frac{\det(P_{h_1(j), h_2(j)}AP_{h_1(j), h_2(j)})}{G^{h_2(j) - h_1(j)}} \to \Theta_{A, 1}(\tau_{h_1})\Theta_{A, 2}(\tau_{h_2})
$$
as $j\to \infty$ by Theorem \ref{MainResult-1}, which contradicts the fact that 
$|F[h_1(j), h_2(j)]| \ge \epsilon$ for each $j\in \mathbb{N}$ since $h_1$ and $h_2$ are subsequences of 
$\{n_1(k)\}_{k=1}^\infty$ and  $\{n_2(k)\}_{k=1}^\infty$, respectively. 	
\end{proof}

Note that the additional assumption that 
$\Xi$ is at most countable is not a serious restriction. Indeed, for a given operator $A$ with almost periodic diagonals, one 
can take for $\Xi$ the additive subgroup of $\R/\Z$ generated by the union of the Fourier spectra of
all the diagonals $D_k(A)\in AP(\Z)$ of $A$. Recall that the Fourier spectrum of any sequence in $AP(\Z)$ is at most countable.

%%%%%%%%%%%%%%%%%%%%%%%%%%%%%%%%%%%%%%%%%%%%%%%%%%%%%%%%%%%%%%%%%%%%%%%%%%%%%%%%%%%%%%%%%%%%%%%%%%%%%%%%%%%%%%%%%%%%%%%%%%%%%%%%%%%%%%%%%%%%%%%%%%%%%%%%%%%%%%%%
\section{Special cases and additional remarks}
\label{sec:7}

If one wants to verify whether the limit theorems (Theorem \ref{MainResult-1} or Theorem \ref{MainResult-2}) can be applied
to a concrete operator $A\in\mathcal{OAP}\subseteq \mathcal{L}(l^2(\Z))$  one faces the following problems:

\begin{itemize}  
\item[(i)]
Does there exist an admissible and compatible weight $\beta$ on $\Xi$,
where $\Xi$ is a subgroup of $\R/\Z$ containing the  Fourier spectra of all the diagonals 
$D_k(A)\in AP(\Z)$ of $A$ ?
\item[(ii)] 
Does the operator $A$ belong to $\cR=\mathcal{W}_{\alpha_1,\alpha_2}(APW(\Z,\Xi,\beta))$ ?
\item[(iii)]
Moreover, can $A$ be written as a product of exponentials of operators in $\cR$ ?
\end{itemize}
As we will see below, the first problem is related to diophantine approximation, and the last problem 
naturally leads to the question whether $\cR$ is inverse closed in $\mathcal{L}(l^2(\Z))$.

The questions are also related to each other in the following sense. 
On the one hand, the compatibility condition requires the weight 
$\beta$ to grow sufficiently fast (depending on $\Xi$). A fast growing weight, on the other hand, severely restricts 
the class $\cR$. In addition, it may prevent the inverse closedness of $\cR$ in $\mathcal{L}(l^2(\Z))$. For this reason, 
it would be desirable to consider (admissible and compatible) weights $\beta$
satisfying the {\em Gelfand-Raikov-Shilow condition} (or, {\em GRS-condition}), namely
\begin{equation}
\lim_{k\to \infty} \beta(k\xi)^{1/k}=1
\quad\mbox{ for all }\xi\in \Xi.
\end{equation}
(Therein $k$ is a positive integer and $k\xi\in \Xi$ is well-defined.)
In fact, this conditions is necessary for the inverse closedness of  $\cR$ in $\mathcal{L}(l^2(\Z))$.

\medskip
Consequently,  we may pose the following question: For which (countable) subgroups $\Xi$ of $\R/\Z$ do there exist
admissible and compatible weights $\beta$ satisfying in addition the GRS-condition?

This question is not easy to answer in general, and we will discuss it to some extent for finitely
generated groups $\Xi$. However, let us first present a simple positive example (mentioned already in \cite[Ex.~2.9]{ERS})
as well as a (not so simple) counter-example.

\begin{example}
Let $\Xi=\Q/\Z$. This is a countable subgroup of $\R/\Z$ which is not finitely generated.
Yet, we can define an admissible and compatible weight on $\Xi$ simply  by 
$\beta(\xi)=q$ whenever $\xi=[p/q]$ with $p\in\Z$, $q\in\mathbb{N}$ being coprime.
This weight also satisfies the GRS-condition.
\end{example}

Here and in what follows, we will notationally distinguish between an equivalence class $[x]\in\R/\Z$ and
its representative $x\in\R$. Furthermore, for a given subset $S\subseteq \R/\Z$, let $\gr S$ stand for the subgroup 
of $\R/\Z$ generated by the set $S$. In other words, $\gr S$ consists of all 
finite integer linear combinations of elements from $S$.

\begin{example}
Let $\Xi_\xi=\gr\{[\xi]\}$ denote the subgroup generated by an irrational number $\xi$.
In Section 6 of \cite{ERS} (see Example 6.7 in particular), the following class of examples has been exhibited.

Let $1<b<c$ and $\alpha$ be real numbers such that $0<\alpha<1-\frac{\log b}{\log c} <1$.
Then one can construct a Liouville number $\xi$ and a (stricly increasing) sequence $h=\{h(n)\}_{n=1}^\infty$
of integers such that
\begin{itemize}
\item[(i)]
the sequence $h$ is distinguished for $\Xi_\xi$ in sense of \cite{ERS} (hence fractal for $\Xi_\xi$ in our sense), and
\item[(ii)]
$$
\sum_{k=0}^{h(n)-1} a(k) = h(n)^\alpha(1+o(1)),\qquad n\to \infty,
$$
where $a=\sum_{k=1}^\infty b^{-k} e_{k \xi}\in AP(\Z)$.
\end{itemize}
This class of examples has the following consequences. Suppose there exists an admissible and compatible weight $\beta$ on $\Xi_\xi$
satisfying in addition 
$$
\lim_{k\to \infty} \beta(k \xi)^{1/k}<b,
$$
(the latter being the case if $\beta$ satisfies the GRS-condition). Then $a\in APW(\Z,\Xi,\beta)$, and
 it would be possible to apply Theorem \ref{thm trace eva} with $h_1(n)=0$ and $h_2(n)=h(n)$
and obtain the asymptotics \eqref{trace-conv}. But this contradicts the asymptotics given in (ii).

Therefore we can conclude that for the Liouville numbers $\xi$ constructed above, there exists no weight $\beta$ on $\Xi_\xi$
which is admissible, compatible, and satisfies the GRS-condition.

It is an open question whether there exist Liouville numbers $\xi$ for which there exists no admissible and 
compatible weight $\beta$ on $\Xi_\xi$ (regardless of the GRS-condition).
\end{example}

%%%%%%%%%%%%%%%%%%%%%%%%%%%%%%%%%%%%%%%%
%%%%%%%%%%%%%%%%%%%%%%%%%%%%%%%%%%%%%%%%
\subsection{The structure of finitely generated subgroups $\boldsymbol{\Xi}$ of $\boldsymbol{\R/\Z}$}
\label{sec:7.1}

Let us now consider finitely generated subgroups $\Xi$ of  $\R/\Z$. Such subgroups are necessarily countable.
The following two results characterize the structure of such subgroups.
Therein, let $\Z_N=\Z/(N\Z)$ denote the set of congruence classes modulo $N$.
The proof of the first proposition is straightforward. The second one could certainly be derived from the general structure
theorem for finitely generated abelian groups, but we provide a proof which is constructive.

\begin{prop}\label{fin gen Xi}
For $n\ge0$, let $\xi_1, \dots, \xi_n$ be real numbers such that $\{\xi_1, \dots, \xi_n,1\}$ is linearly independent over $\mathbb{Q}$, and let $N\in\mathbb{N}$.
Then the map
\begin{equation}\label{group-iso}
(\alpha_1,\dots,\alpha_n,\alpha_{n+1})\in \Z^n\times \Z_N
\mapsto
	\left[\alpha_1\xi_1 +\cdots \alpha_n\xi_n + \alpha_{n+1}\frac{1}{N}\right] \in\R/\Z
\end{equation}
is a well-defined group isomorphism between the additive group  $\Z^n\times \Z_N$ and the 
finitely generated subgroup $\Xi = \gr\left\{ [\xi_1], \dots, [\xi_n], [\frac{1}{N}] \right\}$ of $\R/\Z$.
\end{prop}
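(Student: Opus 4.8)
The plan is to verify directly that the map $\phi$ in \eqref{group-iso} is a well-defined group homomorphism which is both surjective and injective. The $\mathbb{Q}$-linear independence hypothesis is used only in establishing injectivity, which I expect to be the crux of the argument; everything else is bookkeeping about the quotient $\R/\Z$.

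First I would address well-definedness, since the last coordinate of the domain lives in $\Z_N$ rather than $\Z$. If two representatives $\alpha_{n+1}$ and $\alpha_{n+1}'$ of the same class differ by a multiple $kN$ of $N$, then their contributions $\alpha_{n+1}/N$ and $\alpha_{n+1}'/N$ differ by the integer $k$, which vanishes upon passing to $\R/\Z$; hence $\phi$ depends only on the residue class modulo $N$. Because the projection $\R\to\R/\Z$ is a homomorphism and the defining expression is $\Z$-linear in $(\alpha_1,\dots,\alpha_{n+1})$, it then follows at once that $\phi$ is a group homomorphism, the addition-modulo-$N$ in the last slot being absorbed by the same observation. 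Surjectivity is automatic: each generator $[\xi_i]$ and $[1/N]$ of $\Xi$ is the image of the corresponding standard basis vector, so the image of $\phi$ is a subgroup of $\R/\Z$ containing the generating set of $\Xi$, hence equals $\Xi$.

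The main obstacle, and the only point where the hypothesis enters, is injectivity, which I would prove by computing the kernel. Suppose $\phi(\alpha_1,\dots,\alpha_n,\alpha_{n+1})=[0]$, so that $\alpha_1\xi_1+\cdots+\alpha_n\xi_n+\alpha_{n+1}/N=m$ for some $m\in\Z$. Clearing the denominator $N$ and collecting terms yields
\[
N\alpha_1\xi_1+\cdots+N\alpha_n\xi_n+(\alpha_{n+1}-Nm)\cdot 1=0,
\]
which is a $\mathbb{Q}$-linear relation among $\xi_1,\dots,\xi_n,1$. By the assumed linear independence over $\mathbb{Q}$, every coefficient vanishes: $N\alpha_i=0$ for $i=1,\dots,n$ and $\alpha_{n+1}=Nm$. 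Since $N\ge1$, the first set of equations forces $\alpha_i=0$ for all $i$, while $\alpha_{n+1}=Nm$ means $\alpha_{n+1}\equiv 0\pmod N$, i.e.\ $\alpha_{n+1}$ is the zero class in $\Z_N$. Thus the kernel is trivial and $\phi$ is injective.

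Combining the three properties shows that $\phi$ is a group isomorphism of $\Z^n\times\Z_N$ onto $\Xi$, as claimed. I would also note in passing that the degenerate case $n=0$ poses no difficulty: the hypothesis reduces to $1\neq 0$ and $\phi$ becomes the standard isomorphism $\Z_N\cong \tfrac{1}{N}\Z/\Z$.
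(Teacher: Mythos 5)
Your proof is correct and is precisely the ``straightforward'' direct verification the paper alludes to: the paper omits the argument entirely, remarking only that the proof of this proposition is straightforward, and your four steps (well-definedness modulo $N$, $\Z$-linearity giving the homomorphism property, surjectivity onto the generating set, and the kernel computation where clearing the denominator $N$ turns a relation in $\R/\Z$ into a $\Q$-linear relation among $\xi_1,\dots,\xi_n,1$) are exactly the intended bookkeeping, with the $\Q$-independence hypothesis entering only at the injectivity step as you say.
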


\begin{prop}\label{char Xi}
If  $\Xi$ is a finitely generated subgroup of $\R/\Z$, then there exists $n\ge0$ and $N\ge 1$ such that 
$\Xi$ is group-isomorphic to $\Z^n\times \Z_N$ via an isomorphism of the form \eqref{group-iso}.
\end{prop}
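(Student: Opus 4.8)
The plan is to realize $\Xi$ as the image of a free abelian subgroup of $(\R,+)$ under the quotient map $\pi:\R\to\R/\Z$, and then to choose a basis of that free group adapted to the element $1$. Concretely, I would fix real representatives $\eta_1,\dots,\eta_m$ of a finite generating set of $\Xi$ and set $\Lambda:=\gr\{\eta_1,\dots,\eta_m,1\}\subseteq\R$. As a finitely generated torsion-free abelian group, $\Lambda$ is free of some finite rank $r\ge1$ (it contains $1\neq0$). Since $1\in\Lambda$, the restriction of $\pi$ to $\Lambda$ is a surjection onto $\Xi$ whose kernel is $\Lambda\cap\Z=\Z$, so $\Xi\cong\Lambda/\gr\{1\}$. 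Everything is thereby reduced to understanding the quotient of the free group $\Lambda$ by the cyclic subgroup generated by the single element $1$.

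The heart of the argument is to complete $1$ to a basis. First I would write $1=N\,w$, where $N\in\mathbb{N}$ is the greatest common divisor of the coordinates of $1$ with respect to any basis of $\Lambda$ (equivalently, the largest integer with $1/N\in\Lambda$) and $w\in\Lambda$ is \emph{primitive}, i.e.\ has coprime coordinates. A primitive element of a free abelian group extends to a basis, so there is a basis $e_1,\dots,e_r$ of $\Lambda$ with $e_1=w$; hence $1=N e_1$ and $e_1=1/N$. Putting $n:=r-1$ and $\xi_i:=e_{i+1}$ for $1\le i\le n$, and using that the $e_j$ generate $\Lambda$, applying $\pi$ gives
$$
\Xi=\pi(\Lambda)=\gr\{[e_1],\dots,[e_r]\}=\gr\Big\{\,[1/N],[\xi_1],\dots,[\xi_n]\,\Big\}.
$$

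It remains to verify the linear independence hypothesis of Proposition \ref{fin gen Xi}. A $\Z$-basis of a subgroup of $\R$ is automatically linearly independent over $\Q$: clearing denominators turns any rational relation into an integral one, which must be trivial by the basis property. Thus $e_1,\dots,e_r$ are $\Q$-linearly independent, and since $1=N e_1$ with $N\neq0$, so is the set $\{\xi_1,\dots,\xi_n,1\}=\{e_2,\dots,e_r,1\}$. Proposition \ref{fin gen Xi} then shows that the map \eqref{group-iso} associated with these $\xi_i$ and this $N$ is a group isomorphism from $\Z^n\times\Z_N$ onto exactly the displayed group $\gr\{[1/N],[\xi_1],\dots,[\xi_n]\}=\Xi$, as required; degenerate cases ($n=0$ or $N=1$) are covered automatically. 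The main obstacle is the completion step, namely extracting the integer $N$ and a basis of $\Lambda$ containing $1/N$. This is precisely the stacked-basis (Smith normal form) phenomenon applied to the rank-one subgroup $\gr\{1\}\subseteq\Lambda$, and it is what makes the construction explicit rather than a mere appeal to the classification of finitely generated abelian groups.
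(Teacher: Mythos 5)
Your proof is correct, but it takes a genuinely different route from the paper's. The paper works directly with generating sets of $\Xi$: it chooses a generating tuple with the minimal possible number $n$ of irrational generators, merges all rational generators into a single $[\frac{1}{N}]$ by gcd/lcm manipulations, and then proves the $\Q$-linear independence of $\{\xi_1,\dots,\xi_n,1\}$ by contradiction --- a nontrivial rational relation with coprime integer coefficients is completed (via a unimodular integer matrix whose last row is the coefficient vector) to a change of generators that trades one irrational generator for a rational one, contradicting minimality. You instead lift $\Xi$ to the finitely generated subgroup $\Lambda=\gr\{\eta_1,\dots,\eta_m,1\}$ of $\R$, invoke freeness of finitely generated torsion-free abelian groups, and apply the primitive-element (stacked basis / Smith normal form) theorem to the rank-one sublattice $\gr\{1\}$ to produce a basis of $\Lambda$ of the form $\frac{1}{N},\xi_1,\dots,\xi_n$; pushing this basis through the quotient map $\pi$ gives the required presentation, and the needed $\Q$-independence is automatic since a $\Z$-basis of a subgroup of $\R$ is $\Q$-linearly independent. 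Interestingly, both arguments rest on the same piece of linear algebra over $\Z$ --- that a primitive integer vector is a row of a unimodular matrix --- but deploy it differently: the paper uses it inside a minimality contradiction and thereby stays close to explicit manipulations of generators (which is why the authors advertise their proof as constructive, avoiding the structure theorem), whereas your lattice-quotient formulation $\Xi\cong\Lambda/\Z$ is shorter and more conceptual, treats rational and irrational generators uniformly with no case distinctions, and identifies $N$ intrinsically as the largest integer with $\frac{1}{N}\in\Lambda$, at the cost of invoking freeness and the stacked-basis theorem as known results. Your verification of the independence hypothesis and of the equality $\pi(\Lambda)=\Xi$ (using $[1]=[0]$) is complete, and the degenerate cases $n=0$, $N=1$ indeed come out correctly, so nothing is missing.
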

\begin{proof}
The group $\Xi$ being finitely generated means that there exist generators $[\xi_1],\dots,[\xi_m]\in\R/\Z$ such that 
$$
\Xi=\gr \left\{ [\xi_1], \dots, [\xi_m]   \right\}:=\left\{\sum_{k=1}^m \alpha_k[\xi_k]\,:\, \alpha_k\in\Z\right\}.
$$
The numbers $\xi_k$ therein are either irrational or rational. Among all tuples $([\xi_1],\dots,[\xi_m])$ of elements in $\R/\Z$ generating the group $\Xi$ 
consider one in which the number of irrational generators is as small as possible, say equal to $n$. Therefore, without loss of generality, we can assume that 
\begin{equation}\label{gen-1}
\Xi=\gr \left\{  [\xi_1], \dots, [\xi_n], [\xi_{n+1}],\dots,[\xi_{n+k}]   \right\}
\end{equation}
where $\xi_1\dots,\xi_n\in\R\setminus\Q$, $\xi_{n+1},\dots,\xi_{n+k}\in\Q$, and $n,k\ge0$.

Concerning the number $k$ of rational generators we made no assumption at this point. However, it is possible to modify the rational generators in \eqref{gen-1} and
 keeping the irrational generators unchanged such that the resulting representation has precisely one rational generator and in addition that this rational generator equals  $[\frac{1}{N}]$ for some integer $N\ge1$.
In other words, we can turn \eqref{gen-1} into
\begin{equation}\label{gen-2}
\Xi=\gr \left\{  [\xi_1], \dots, [\xi_n], [{\textstyle \frac{1}{N}}]   \right\}.
\end{equation}
The argument is as follows. First of all, if $k=0$, we can add the `dummy'  element $[1]=[0]$.
If $k\ge1$, then each rational generator $[\frac{p}{q}]$ in which $p\in\Z$ and $q\ge1$ are coprime can be replaced by $[\frac{1}{q}]$.
This is because $\gr\{[\frac{p}{q}]\}=\gr\{[\frac{1}{q}]\}$. If we have more than one rational generator ($k\ge2$) then we can replace
two generators $[\frac{1}{q_1}]$ and $[\frac{1}{q_2}]$ by a single generator $[\frac{1}{q}]$ with $q$ being the least common multiple of $q_1$ and $q_2$.
Here the reason is that $\gr\{[\frac{1}{q_1}],[\frac{1}{q_2}]\}=\gr\{[\frac{1}{q}]\}$.

We can now assume that $\Xi$ is given by \eqref{gen-2} and that the number $n$ of irrational generators cannot be reduced.
We claim that $\{\xi_1,\dots,\xi_n,1\}$ is rationally independent over $\Q$.
Suppose this is not the case.  Then $n\ge1$ and there are $a_1,\dots,a_n\in\Z$  such that 
$$
a_1\xi_1+\dots +a_n\xi_n=\frac{p}{q}\in\Q
$$
while not all $a_1,\dots,a_n$ are zero. In fact, one can assume that $\gcd(a_1,\dots,a_n)=1$. 
It is well-known and straightforward to show (e.g., by induction on $n$) that 
there exists a unimodular $n\times n$ matrix $M$ with entries in $\Z$ whose last row is 
$(a_1,\dots, a_{n})$.  Define the elements $[\xi_1'], \dots,[\xi_n']$ by
\begin{equation*}
		\begin{bmatrix}
			[\xi_1'] \\ \vdots \\ [\xi_{n}']
		\end{bmatrix} 
		:= M
		\begin{bmatrix}
			[\xi_1] \\ \vdots  \\ [\xi_{n}]
		\end{bmatrix}
	\end{equation*}
and note that $[\xi_n']=[\frac{p}{q}]$. Because $M^{-1}$ has entries in $\Z$ as well, it is easily seen that
$$
\gr   \left\{  [\xi_1], \dots, [\xi_n]  \right\}
=
\gr  \left\{  [\xi_1'], \dots, [\xi_n']  \right\}.
$$
Therefore,
$$
\Xi=\gr  \left\{  [\xi_1'], \dots, [\xi_{n-1}'], [{\textstyle\frac{p}{q}}], [{\textstyle\frac{1}{N}}]  \right\}.
$$
But this means that the number of irrational generators of $\Xi$ can be reduced, contrary to our assumption.

We therefore conclude that $\{\xi_1,\dots,\xi_n,1\}$ is linearly independent over $\Q$, which makes it possible to apply Proposition \ref{fin gen Xi}.
Since $\Xi$ is given by \eqref{gen-2}, it follows that $\Xi$ is group-isomorphic to $\Z^n\times\Z_N$ via the map \eqref{group-iso}.
\end{proof}

\medskip
After having seen that every finitely generated subgroup of $\R/\Z$ is group-isomorphic to
$\Z^n\times \Z_N$, the question now is for which of them does there exist an admissible and compatible weight
satisfying the GRS-condition. This depends, of course, on the irrational generators
$\xi_1,\dots,\xi_n$ featured in the description of the group $\Xi$.

\medskip
Before going into this, let us mention the trivial case of $n=0$, i.e., $\Xi=\gr\{[\frac{1}{N}]\}\cong \Z_N$. Since this group is finite the choice of the weight $\beta$ is irrelevant (one can take $\beta\equiv 1$). The corresponding Banach algebra 
$APW(\Z,\Xi,\beta)$ consists of all sequences in $l^\iy(\Z)$ that have period $N$. 
Even though this is the trivial case, Theorem \ref{MainResult-1} or Theorem \ref{MainResult-2} implies a generalization of the classical strong Szeg\"o-Widom limit theorem (see Subsection \ref{sec:7.3} below).

\bigskip

Let us now consider the case $n\ge1$. We denote by $\mathcal{S}_n \subseteq (\R/\Z)^n$ the set of all $(\xi_1, \dots, \xi_n)$ for which there exist $\omega>0$ and $C>0$ such that
\begin{equation}\label{f7.5}
        \|\alpha_1\xi_1+\cdots+\alpha_n\xi_n\|_{\R/\Z} \ge C \left(\max_{1\le i\le n} |\alpha_i|\right)^{-\omega}
\end{equation}
for all $(\alpha_1,\dots,\alpha_n)\in\Z^n\setminus\{0\}$.

As the following result shows, absence from $\mathcal{S}_n$ is rather exceptional. For a proof see, e.g., 
Section 3.5.3 ($n=1$) and Section 4.3.2 ($n\ge 2$) of \cite{BerDod}. 
Note that  the set $\mathcal{S}_1$ corresponds to irrational numbers which are not Liouville numbers.

\begin{thm}
The complement of $\mathcal{S}_n$ in $(\R/\Z)^n$ has Hausdorff dimension $n-1$, hence Lebesgue measure zero in $(\R/\Z)^n$.
\end{thm}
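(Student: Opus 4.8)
The plan is to identify the complement $(\R/\Z)^n\setminus\mathcal{S}_n$ as a countable intersection of $\limsup$-type sets, and then bound its Hausdorff dimension from above by a covering argument and from below by exhibiting a subtorus. First I would rewrite the complement. Negating the defining property of $\mathcal{S}_n$, a point $\xi=(\xi_1,\dots,\xi_n)$ lies outside $\mathcal{S}_n$ precisely when, for every $\omega>0$ and every $C>0$, there is some $\alpha\in\Z^n\setminus\{0\}$ with $\|\alpha_1\xi_1+\cdots+\alpha_n\xi_n\|_{\R/\Z}<C(\max_i|\alpha_i|)^{-\omega}$. Setting
\[
K(\omega)=\Big\{\,\xi\in(\R/\Z)^n:\ \|\alpha_1\xi_1+\cdots+\alpha_n\xi_n\|_{\R/\Z}\le (\max_i|\alpha_i|)^{-\omega}\ \text{for infinitely many }\alpha\in\Z^n\setminus\{0\}\,\Big\},
\]
I would show $(\R/\Z)^n\setminus\mathcal{S}_n=\bigcap_{\omega>0}K(\omega)=\bigcap_{m\in\mathbb{N}}K(m)$, the last equality using that $K(\omega)$ decreases in $\omega$. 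The only delicate point is the degenerate case in which $\alpha^*\cdot\xi\equiv0\pmod 1$ for some $\alpha^*\neq0$; then the multiples $k\alpha^*$ already furnish infinitely many solutions, placing $\xi$ in every $K(\omega)$, so both descriptions of the complement agree.

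For the upper bound I would run a Hausdorff--Cantelli covering argument on each $K(\omega)$. For fixed $\alpha$ of height $H=\max_i|\alpha_i|$, the set $\{\xi:\|\alpha\cdot\xi\|_{\R/\Z}\le H^{-\omega}\}$ is a union of $O(H)$ slabs in $(\R/\Z)^n$, each a neighborhood of an affine hyperplane of thickness $\asymp H^{-\omega}/\|\alpha\|\asymp H^{-\omega-1}$; such a slab is covered by $\asymp \delta^{-(n-1)}$ balls of radius $\delta=H^{-\omega-1}$. Summing the $s$-dimensional contributions $\delta^s$ over all slabs and over the $\asymp H^{n-1}$ vectors $\alpha$ of each height $H$, the series $\sum_{H}H^{\,n-(\omega+1)(s-n+1)}$ converges as soon as $s>n-1+\frac{n+1}{\omega+1}$. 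Hence $\mathcal{H}^s(K(\omega))=0$ for such $s$, so $\dim_{\mathrm H}K(\omega)\le n-1+\frac{n+1}{\omega+1}$. Since the complement is contained in every $K(\omega)$, letting $\omega\to\infty$ gives $\dim_{\mathrm H}\big((\R/\Z)^n\setminus\mathcal{S}_n\big)\le n-1$; in particular its $\mathcal{H}^n$-measure, i.e.\ its Lebesgue measure, is zero.

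For the matching lower bound I would simply observe that any nonzero $\alpha^*\in\Z^n$ produces a subtorus $\{\xi\in(\R/\Z)^n:\alpha^*\cdot\xi\equiv0\pmod 1\}$ contained in the complement of $\mathcal{S}_n$ (by the degenerate case above, or directly since $\|\alpha^*\cdot\xi\|_{\R/\Z}=0$ violates the defining inequality of $\mathcal{S}_n$). This subtorus is a finite union of $(n-1)$-dimensional affine subtori and therefore has Hausdorff dimension $n-1$, forcing $\dim_{\mathrm H}\big((\R/\Z)^n\setminus\mathcal{S}_n\big)\ge n-1$. Combining the two bounds yields the claimed value $n-1$.

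The main obstacle is the covering estimate in the upper bound: one must track the slab geometry and the counting of lattice vectors $\alpha$ carefully enough that the critical exponent in the Hausdorff sum tends to $n-1$ as $\omega\to\infty$. By contrast, the reduction to the sets $K(\omega)$ and the lower bound via the subtorus are comparatively routine, and the passage to Lebesgue measure zero is immediate once the dimension bound $n-1<n$ is in hand.
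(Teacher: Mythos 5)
Your proposal is correct, but it is worth pointing out that it does not parallel an argument in the paper, because the paper gives no proof of this theorem at all: it simply cites Sections 3.5.3 ($n=1$) and 4.3.2 ($n\ge 2$) of \cite{BerDod}, where the underlying result is the Jarn\'{\i}k--Besicovitch theorem for linear forms, i.e.\ the determination of the exact Hausdorff dimension $n-1+\frac{n+1}{\omega+1}$ of each set $K(\omega)$. Your argument is a legitimate self-contained substitute. The set identity $(\R/\Z)^n\setminus\mathcal{S}_n=\bigcap_{m}K(m)$ is right, including the degenerate case $\alpha^*\cdot\xi\equiv 0 \pmod 1$, which you correctly dispose of via the multiples $k\alpha^*$; without that observation the identity would fail for points lying on rational subtori. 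The covering bookkeeping also checks out: per height $H$ there are $O(H^{n-1})$ vectors $\alpha$, each contributing $O(H)$ slabs of thickness $\asymp H^{-\omega}/\|\alpha\|\asymp H^{-\omega-1}$, each slab needing $\asymp\delta^{-(n-1)}$ balls of radius $\delta=H^{-\omega-1}$, so the $s$-content at height $H$ is $O\bigl(H^{\,n-(\omega+1)(s-n+1)}\bigr)$, summable exactly when $s>n-1+\frac{n+1}{\omega+1}$; Hausdorff--Cantelli then kills $\mathcal{H}^s(K(\omega))$, and letting $\omega\to\infty$ gives the upper bound $n-1$. The comparison, then, is this: your route needs only the convergence (upper-bound) half of Jarn\'{\i}k--Besicovitch, which is elementary, because the trivial subtorus $\{\xi:\xi_1\equiv 0\}$ already furnishes the matching lower bound $n-1$ for the intersection; the deep divergence half of the cited theorem (the sharp lower bound for each individual $K(\omega)$) is never needed for the statement in the paper. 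What the citation buys the authors is brevity and the finer information about each $K(\omega)$ separately, which your covering argument does not, and need not, recover.
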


It is easy to see that condition \eqref{f7.5} implies that $\{\xi_1,\dots,\xi_n,1\}$ is linearly independent over $\Q$.
Hence for each $N\ge1$, the group
\begin{equation}\label{gr.xi}
\Xi=
\gr \left\{  [\xi_1], \dots, [\xi_n], [{\textstyle \frac{1}{N}}]   \right\}
\end{equation}
can be identified with $\Z^n\times \Z_N$ as described in Proposition \ref{fin gen Xi}.

\begin{prop}\label{p7.6}
For $n\ge1$, let $(\xi_1,\dots,\xi_n)\in \mathcal{S}_n$, and let $\omega$ be the constant in \eqref{f7.5}. 
Then for each $N\ge1$  one can define an admissible and compatible weight on
    \[\Xi = \left\{ \xi=
    \Big[ \alpha_1\xi_1+\cdots+\alpha_n\xi_n+\alpha_{n+1}\frac{1}{N}\Big]\,:\, \alpha = (\alpha_1, \dots, \alpha_n,\alpha_{n+1})\in \Z^n\times \Z_N \right\},
    \]
satisfying the GRS-condition, by
\begin{equation}\label{weight.finite.group}
\beta(\xi):= (1+|\alpha|)^\omega, \qquad |\alpha| = \max_{1\le i \le n}|\alpha_i|.
\end{equation}
\end{prop}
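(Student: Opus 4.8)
The plan is to verify the three required properties---admissibility, the GRS-condition, and compatibility---directly against their definitions, relying on Proposition \ref{fin gen Xi} to ensure $\beta$ is well defined. Since $(\xi_1,\dots,\xi_n)\in\mathcal{S}_n$ satisfies \eqref{f7.5}, the set $\{\xi_1,\dots,\xi_n,1\}$ is linearly independent over $\Q$, so the map \eqref{group-iso} is a group isomorphism $\Z^n\times\Z_N\to\Xi$. Hence each $\xi\in\Xi$ determines a unique $\alpha=(\alpha_1,\dots,\alpha_n,\alpha_{n+1})$, and $\beta(\xi)=(1+|\alpha|)^\omega$ with $|\alpha|=\max_{1\le i\le n}|\alpha_i|$ is unambiguous (it depends only on the $\Z^n$-coordinates), and clearly $\beta(\xi)\ge1$. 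For admissibility I would write $\xi,\xi'\in\Xi$ in coordinates $\alpha,\alpha'$; since addition in $\Xi$ corresponds to addition in $\Z^n\times\Z_N$, the first $n$ coordinates of $\xi+\xi'$ are $\alpha_i+\alpha_i'$, whence $|\alpha+\alpha'|\le|\alpha|+|\alpha'|$, and the bound $(1+|\alpha|+|\alpha'|)^\omega\le\big((1+|\alpha|)(1+|\alpha'|)\big)^\omega$ follows from monotonicity of $t\mapsto t^\omega$ together with $1+s+t\le(1+s)(1+t)$. The GRS-condition is equally immediate: $k\xi$ has coordinates $(k\alpha_1,\dots,k\alpha_n,\,k\alpha_{n+1}\bmod N)$, so $|k\alpha|=k|\alpha|$ and $\beta(k\xi)^{1/k}=(1+k|\alpha|)^{\omega/k}\to1$ as $k\to\infty$.

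The heart of the argument, and the step I expect to be the main obstacle, is compatibility, i.e.\ showing $C_\beta=\inf_{\xi\neq0}\beta(\xi)\,\|\xi\|_{\R/\Z}>0$. The difficulty is that the Diophantine estimate \eqref{f7.5} only controls $\|\alpha_1\xi_1+\cdots+\alpha_n\xi_n\|_{\R/\Z}$, whereas a generic $\xi\in\Xi$ carries an extra rational term $\alpha_{n+1}/N$ about which \eqref{f7.5} is silent. My plan is to clear this rational part by multiplying by $N$: writing $\xi=[\theta]$ with $\theta=\alpha_1\xi_1+\cdots+\alpha_n\xi_n+\alpha_{n+1}/N$ for an integer representative of $\alpha_{n+1}$, one has $N\theta=N\alpha_1\xi_1+\cdots+N\alpha_n\xi_n+\alpha_{n+1}$, so that $\|N\theta\|_{\R/\Z}=\|N\alpha_1\xi_1+\cdots+N\alpha_n\xi_n\|_{\R/\Z}$. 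I would then combine the elementary inequality $\|N\theta\|_{\R/\Z}\le N\,\|\theta\|_{\R/\Z}$ with \eqref{f7.5} applied to the tuple $(N\alpha_1,\dots,N\alpha_n)$, whose sup-norm is $N|\alpha|$.

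The final step is a two-case split. If $(\alpha_1,\dots,\alpha_n)\neq0$, then $|\alpha|\ge1$ and \eqref{f7.5} gives $\|N\theta\|_{\R/\Z}\ge C(N|\alpha|)^{-\omega}$, hence $\|\xi\|_{\R/\Z}\ge C N^{-1-\omega}|\alpha|^{-\omega}$ and therefore $\beta(\xi)\,\|\xi\|_{\R/\Z}\ge(1+|\alpha|)^\omega|\alpha|^{-\omega}\,C N^{-1-\omega}\ge C N^{-1-\omega}$. If instead $(\alpha_1,\dots,\alpha_n)=0$ but $\alpha_{n+1}\neq0$ in $\Z_N$, then $|\alpha|=0$, $\beta(\xi)=1$, and $\xi=[\alpha_{n+1}/N]$ with $\alpha_{n+1}\in\{1,\dots,N-1\}$, giving $\|\xi\|_{\R/\Z}\ge1/N$. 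Taking the minimum of the two lower bounds yields $C_\beta\ge\min\{C N^{-1-\omega},\,N^{-1}\}>0$, which establishes compatibility and completes the proof.
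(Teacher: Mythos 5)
Your proof is correct and follows essentially the same route as the paper: the compatibility argument---multiplying by $N$ to annihilate the rational part, using $\|N\theta\|_{\R/\Z}\le N\|\theta\|_{\R/\Z}$, applying \eqref{f7.5} to the tuple $(N\alpha_1,\dots,N\alpha_n)$ of sup-norm $N|\alpha|$, and then treating the case $(\alpha_1,\dots,\alpha_n)=0$, $\alpha_{n+1}\neq0$ separately---is exactly the paper's proof. The only difference is that you spell out admissibility and the GRS-condition, which the paper dismisses as obvious.
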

\begin{proof}
Obviously, the weight is admissible and satisfies the GRS-condition. 
Note that $\|\frac{x}{N}\|_{\R/\Z}\ge \frac{1}{N}\|x\|_{\R/\Z}$.   Therefore, whenever $|\alpha|\neq0$,
\begin{align*}
\|\xi\|  &=
\left\|\alpha_1\xi_1+\dots+\alpha_n\xi_n+\frac{\alpha_{n+1}}{N}\right\|
\ge \frac{1}{N}\left\| N\alpha_1\xi_1+\dots+N \alpha_n\xi_n+\alpha_{n+1}\right\|
\\
&
= \frac{1}{N}\left\| N\alpha_1\xi_1+\dots+N \alpha_n\xi_n\right\| 
\ge \frac{C}{N^{1+\omega}} (1+ |\alpha|)^{-\omega}
=\frac{C}{N^{1+\omega}\beta(\xi)}.
\end{align*}
Taking also into account the trivial case of $|\alpha|=0$, $\alpha_{n+1}\neq0$, this implies that $\beta$ is compatible
by the definition \eqref{compat.cond}.
\end{proof}

\bigskip
In order to practically apply the previous proposition one would have to know when $(\xi_1,\dots,\xi_n)\in\mathcal{S}_n$. 
We confine ourselves to briefly state two examples, both of which rely on deep results about diophantine approximation
(see, e.g., \cite{Cassels, MR0203515}). These examples were discussed thoroughly in \cite[Section 2.3]{ERS}, where also further references can be found.

\begin{example}
{\bf (Roth-Schmidt, \cite{MR0072182, MR0268129})}
Let $\xi_1,\dots,\xi_n$ be (real) algebraic numbers such that $\{\xi_1,\dots,\xi_n,1\}$ is linearly independent over $\Q$.
Then for every $\varepsilon>0$ there exists a constant $C_\varepsilon>0$ such that
    \[
    \|\alpha_1\xi_1+\cdots+\alpha_n\xi_n\|_{\R/\Z} \ge C_\varepsilon\left(\max_{1\le i \le N}|\alpha_i|\right)^{-n-\varepsilon}\]
for every $( \alpha_1, \dots, \alpha_n)\in \Z^{n}\setminus\{0\}$.
\end{example}

The second example concerns logarithms of algebraic numbers.

\begin{example}
{\bf (Baker-Feldman, \cite{Bak, Feld})}
Let $\Lambda=\{\lambda\in\C\,:\, \exp(\lambda)\in\overline{\Q}\}$, where
$\overline{\Q}$ denotes the set of all (possibly complex) algebraic numbers.
Consider
$$
\xi_1,\dots,\xi_m\in\Lambda\cap\R ,\qquad \xi_{m+1},\dots,\xi_{n}\in i\Lambda\cap \R,
$$
such that $\{\xi_1,\dots,\xi_m\}$  and $\{ \xi_{m+1},\dots,\xi_{n}\}$ are linearly independent over $\Q$.
Then there exists (effectively computable) constants $C>0$ and $\omega>0$ such that \eqref{f7.5} holds.
\end{example}

%%%%%%%%%%%%%%%%%%%%%%
%%%%%%%%%%%%%%%%%%%%%%
%%%%%%%%%%%%%%%%%%%%%%
\subsection{The limit theorem for finitely generated subgroups}
\label{sec:7.2}

Let $\Xi$ be a finitely generated subgroup of $\R/\Z$ which is group-isomorphic to $\Z^n\times \Z_N$.
Then, as we will see shortly, $\HomXT$ is naturally isomorphic as a compact group to 
$\T^n\times \T_N$, where $\T_N:=\{\omega\in\C\,:\, w^N=1\}$.

We will use this observation in order to replace in the uniform version of the limit theorem (Theorem \ref{MainResult-2}) 
the continuous functions $\Theta_{A,1}$ and $\Theta_{A,2}$ defined on  $\HomXT$
by continuous functions defined on $\T^n\times \T_N$.

\medskip
Assume that $\Xi$ is group-isomorphic to $\Z^n\times \Z_N$ via the group-isomorphism \eqref{group-iso}.
Then $\HomXT$ can be identified with $\T^n\times \T_N$ as follows.
Each $(t_1,\dots,t_n,t_{n+1})\in \T^n\times \T_N$ gives rise to a $\tau\in \HomXT$ defined by
$$
\tau(\xi)=t_1^{\alpha_1}\cdots t_n^{\alpha_n} t_{n+1}^{\alpha_{n+1}}
$$
with $\xi=[\alpha_1\xi_1+\dots \alpha_n\xi_{n}+\alpha_{n+1}\frac{1}{N}]$, $(\alpha_1,\dots,\alpha_n,\alpha_{n+1})\in\Z^n\times \Z_N$. Conversely, it is easy to see,  that each $\tau\in\HomXT$ arises in this way.
In fact, the underlying map
$$
\Lambda:(t_1,\dots,t_n,t_{n+1})\mapsto \tau
$$ 
is both a group-isomorphism and a homeomorphism between $\T^n\times \T_N$ and $\HomXT$.
%Indeed, $\Lambda$ is a continuous map between these two compact Hausdorff spaces.

\medskip
The following theorem also includes the case $n=0$, in which we stipulate $\Xi=\gr\{[\frac{1}{N}]\}$ and $\beta\equiv1$.
For $n\ge1$, notice that the weight $\beta$ depends on the parameter $\omega$ given by \eqref{f7.5}, which in turn depends on $(\xi_1,\dots,\xi_n)$.

\begin{thm}[Limit theorem for finitely generated groups]\label{conclusion}
Let $n\ge0$, $(\xi_1,\dots,\xi_n)\in\mathcal{S}_n$, and $N\ge1$, and consider the group $\Xi$
with the weight $\beta$ defined by \eqref{gr.xi} and \eqref{weight.finite.group}.
Let $\mathcal{R} = \mathcal{W}_{\alpha_1,\alpha_2}(\mathcal{A})$ with $\alpha_1, \alpha_2\ge 0$, $\alpha_1 +\alpha_2 = 1$, 
and $\mathcal{A} = APW(\Z,\Xi,\beta)$.

 If $A_1,\dots, A_r\in \mathcal{R}$ and $A = e^{A_1}\cdots e^{A_r}$,
 then there exist continuous functions
$$
\widehat{\Theta}_{A,k}:\T^n\times\T_N\to\C,\qquad k=1,2,
$$
such that 
\begin{equation}\label{limit6.8b}
		\lim_{n_2 - n_1\to  \infty} \left(	  \frac{\det(P_{n_1, n_2}AP_{n_1, n_2})}{G^{n_2 - n_1}} -
		 E_{A,1}(n_1)E_{A,2}(n_2) \right) = 0
\end{equation}
with
$$
E_{A,k}(m) = \widehat{\Theta}_{A, k}
(e^{2\pi i \xi_1 m},\dots,e^{2\pi i \xi_{n} m},e^{2\pi i  \frac{1}{N} m}),\qquad k=1,2,\;\; m\in\Z,
$$
$$
G=\exp(M(a)),\quad a=D(A_1+\dots+A_r).
$$
\end{thm}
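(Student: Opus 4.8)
The plan is to deduce Theorem \ref{conclusion} directly from the uniform version (Theorem \ref{MainResult-2}) by transporting the functions $\Theta_{A,1}$ and $\Theta_{A,2}$ from $\HomXT$ to $\T^n\times\T_N$ via the identification $\Lambda$ recalled before the theorem. First I would check that the hypotheses of Theorem \ref{MainResult-2} are satisfied: being finitely generated, $\Xi$ is at most countable, and by Proposition \ref{p7.6} the weight $\beta$ from \eqref{weight.finite.group} is admissible and compatible on $\Xi$ for $n\ge1$ (in the degenerate case $n=0$ one has $\Xi=\gr\{[\frac{1}{N}]\}\cong\Z_N$ finite and $\beta\equiv1$, which is trivially admissible and compatible). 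Hence Theorem \ref{MainResult-2} applies and yields
\begin{equation*}
\lim_{n_2-n_1\to\infty}\left(\frac{\det(P_{n_1,n_2}AP_{n_1,n_2})}{G^{n_2-n_1}}-\Theta_{A,1}(\tau_{n_1})\Theta_{A,2}(\tau_{n_2})\right)=0,
\end{equation*}
with $G=\exp(M(a))$, $a=D(A_1+\cdots+A_r)$, and $\tau_m(\xi)=e^{2\pi i m\xi}$.

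Next I would define $\widehat{\Theta}_{A,k}:=\Theta_{A,k}\circ\Lambda$ for $k=1,2$, where $\Lambda:\T^n\times\T_N\to\HomXT$ is the group-isomorphism and homeomorphism described above. By Proposition \ref{Theta is cont}(c) the functions $\Theta_{A,k}$ are continuous on $\HomXT$, and since $\Lambda$ is a homeomorphism the compositions $\widehat{\Theta}_{A,k}:\T^n\times\T_N\to\C$ are continuous, as required. It then remains to show that $\Theta_{A,k}(\tau_m)=E_{A,k}(m)$, after which $E_{A,1}(n_1)E_{A,2}(n_2)=\Theta_{A,1}(\tau_{n_1})\Theta_{A,2}(\tau_{n_2})$ and \eqref{limit6.8b} becomes a literal restatement of the limit just obtained.

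The matching is a short computation that I would carry out as follows. For an arbitrary $\xi=[\alpha_1\xi_1+\cdots+\alpha_n\xi_n+\alpha_{n+1}\frac{1}{N}]\in\Xi$ one factors
\begin{equation*}
e^{2\pi i m\xi}=\prod_{j=1}^{n}\left(e^{2\pi i\xi_j m}\right)^{\alpha_j}\cdot\left(e^{2\pi i m/N}\right)^{\alpha_{n+1}},
\end{equation*}
which is exactly the value at $\xi$ of $\Lambda(e^{2\pi i\xi_1 m},\dots,e^{2\pi i\xi_n m},e^{2\pi i m/N})$. Hence $\tau_m=\Lambda(e^{2\pi i\xi_1 m},\dots,e^{2\pi i\xi_n m},e^{2\pi i m/N})$, and therefore
\begin{equation*}
E_{A,k}(m)=\widehat{\Theta}_{A,k}(e^{2\pi i\xi_1 m},\dots,e^{2\pi i\xi_n m},e^{2\pi i m/N})=\Theta_{A,k}(\tau_m),
\end{equation*}
as needed.

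I do not expect a genuine obstacle: all the analytic content is already contained in Theorem \ref{MainResult-2} together with the continuity supplied by Proposition \ref{Theta is cont}(c), so the present theorem merely reinterprets that limit through the concrete parametrization of $\HomXT$ by $\T^n\times\T_N$. The only points that demand a little care are the degenerate case $n=0$ and the bookkeeping confirming that $\Lambda$ sends the tuple of exponentials to $\tau_m$; both are routine.
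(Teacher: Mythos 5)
Your proposal is correct and follows essentially the same route as the paper's own proof: apply the uniform version (Theorem \ref{MainResult-2}), define $\widehat{\Theta}_{A,k}=\Theta_{A,k}\circ\Lambda$ using the continuity from Proposition \ref{Theta is cont}(c), and verify that $\Lambda(e^{2\pi i\xi_1 m},\dots,e^{2\pi i\xi_n m},e^{2\pi i m/N})=\tau_m$ so that the product $E_{A,1}(n_1)E_{A,2}(n_2)$ matches $\Theta_{A,1}(\tau_{n_1})\Theta_{A,2}(\tau_{n_2})$. Your explicit verification of the hypotheses (countability of $\Xi$, admissibility and compatibility of $\beta$ via Proposition \ref{p7.6}, and the degenerate case $n=0$) is left implicit in the paper but is a welcome addition.
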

\begin{proof}
In Theorem \ref{MainResult-2} we encountered the functions $\Theta_{A,1}$ and $\Theta_{A,2}$ defined
by \eqref{Theta_1} and \eqref{Theta_2}, and we know from Proposition \ref{Theta is cont}(c) that these functions are continuous on $\HomXT$.  Considering the composition of these functions with $\Lambda$,
$$
\widehat{\Theta}_{A,k}(t_1,\dots, t_n,t_{n+1})= \Theta_{A,k}(\Lambda(t_1,\dots,t_n, t_{n+1})),
$$
we obtain continuous function  $\widehat{\Theta}_{A,k}:\T^n\times \T_N\to\C$. 
The limit \eqref{limit6.8a} contains the quantities $\Theta_{A,k}(\tau_m)$ where  $\tau_m\in \HomXT$ is given by
 $\tau_m(\xi)=e^{2\pi i m \xi}$, $\xi\in \Xi$. It is easy to see that
$$\Lambda(e^{2\pi i \xi_1 m},\dots,e^{2\pi i \xi_{n} m},e^{2\pi i \frac{1}{N} m})=\tau_m.$$
Hence
$$
\Theta_{A,k}(\tau_m)=\widehat{\Theta}_{A,k}(e^{2\pi i \xi_1 m},\dots,e^{2\pi i \xi_{n} m},e^{2\pi i \frac{1}{N} m}).
$$
We now obtain formula  \eqref{limit6.8b} from  formula \eqref{limit6.8a}.
\end{proof}

Note that we did not write down formulas for $\widehat{\Theta}_{A,k}(t_1,\dots, t_n,t_{n+1})$
in terms of operator determinants showing the dependence on $(t_1,\dots,t_{n+1})\in\T^n\times \T_N$ explicitly
since the concrete evaluation of such operator determinants seems to be illusive in general.  
Still one could ask whether such an evaluation is possible in such cases where $A$ is an almost Mathieu operator.

\medskip
While $\widehat{\Theta}_{A,k}(t_1,\dots, t_n,t_{n+1})$ are continuous, one could further ask if (or under what conditions)
these functions are differentiable in $t_1,\dots, t_n\in\T$. We leave this as an open question.

\medskip
Another issue is the vanishing of these operator determinants.
Numerical evidence suggests that for certain (exceptional) $A$ it is possible that the functions $\Theta_{A,1}(\tau)$ and $\Theta_{A,2}(\tau)$ can vanish  at particular values of $\tau$. However, an even more ``undesirable'' situation would occur if one of the functions vanishes identically on  $\HomXT$. For the case $n\ge1$ we do not know whether this can be ruled out. In the case where $n=0$ and $N\ge2$ it can happen, see Subsection \ref{sec:7.3} below.

%%%%%%%%%%%%%%%%%%%%%%%%
\subsection{The case of block Laurent operators revisited}
\label{sec:7.3}

Let us consider the case of block Laurent operators again and establish a generalization of the 
strong Szeg\"o-Widom limit theorem. Such a generalization seems to be stated 
here for the first time. The classical result (in the form of  Theorem \ref{t1.1}) is recovered 
from the theorem below with $k_1=k_2$, although for a slightly different class of symbols. 

To obtain the generalization we apply Theorem \ref{MainResult-2} (or Theorem \ref{conclusion}) 
to the case of $\Xi\cong \Z_N$.
Then the operators under consideration are block Laurent operators where the symbol is a smooth
$N\times N$ matrix valued function. To be more specific, define for $\alpha_1,\alpha_2\ge 0$ the Wiener class
$W_{\alpha_1,\alpha_2}$ consisting of all $a\in L^1(\T)$ such that 
$$
\|a\|_{ W_{\alpha_1,\alpha_2}} =\sum_{k\in\Z} \alpha(k) |a_k|<\iy,
$$
where $\alpha(k)$ is defined by \eqref{alpha.k}.

\begin{thm}[Generalized block Szeg\"o-Widom limit theorem]\label{gen.block}
For $N\ge1$ and for $\alpha_1,\alpha_2\ge0$ with $\alpha_1+\alpha_2=1$, let 
$a\in W_{{\alpha_1,\alpha_2}}^{N\times N}$ be such that 
$\det a(t)\neq 0$ for all $t\in\T$ and $\det a(t)$ has winding number zero. 
Then for each $k_1,k_2\in\{0,\dots,N-1\}$ we have that 
$$
\lim_{m\to\iy} \frac{ \det P_{k_1,k_2+mN} L(a) P_{k_1,k_2+mN}}{G^{k_2-k_1+mN}}=E_{a,1}[k_1]E_{a,2}[k_2],
$$
where $G=\exp\left(\frac{1}{2\pi N}\int_0^{2\pi}\log\det a(e^{ix})\, dx\right)$ and $E_{a,1}[k_1]$ and $E_{a,2}[k_2]$ are certain constants.
\end{thm}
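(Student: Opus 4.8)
The plan is to realize $L(a)$ as a scalar operator on $l^2(\Z)$ with $N$-periodic diagonals and then invoke the fractal version of the limit theorem (Theorem~\ref{MainResult-1}) in the case $\Xi=\gr\{[\frac1N]\}\cong\Z_N$, $\beta\equiv1$ (the case $n=0$ of Subsection~\ref{sec:7.2}). Concretely, I would unfold $l^2(\Z,\C^N)$ onto $l^2(\Z)$ by sending the pair (block index $p$, internal index $s\in\{0,\dots,N-1\}$) to the scalar index $pN+s$. Under this identification the $(p,q)$-block $a_{p-q}$ contributes the scalar entries $(a_{p-q})_{s,t}$, and replacing the scalar row $r$ by $r+N$ leaves $s$, $t$, and $p-q$ unchanged, so each scalar diagonal of $L(a)$ is $N$-periodic; hence $L(a)$, viewed on $l^2(\Z)$, lies in $\mathcal{OAP}$ with all diagonal Fourier spectra contained in $\Xi$. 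Taking $h_1(m)=k_1$ and $h_2(m)=k_2+mN$, the scalar finite section $P_{k_1,k_2+mN}L(a)P_{k_1,k_2+mN}$ is precisely the object of Theorem~\ref{MainResult-1}, of size $h(m)=k_2-k_1+mN\to\infty$.

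First I would check the two structural hypotheses of Theorem~\ref{MainResult-1}. For $L(a)\in\mathcal{R}=\mathcal{W}_{\alpha_1,\alpha_2}(APW(\Z,\Xi,1))$ one compares the weighted diagonal norm $\sum_d\alpha(d)\|D_d(L(a))\|_{\Xi,1}$ with the matrix Wiener norm $\|a\|_{W_{\alpha_1,\alpha_2}}$; regrouping the scalar diagonals $d=(p-q)N+(s-t)$ by the block index $p-q$ and using that the internal offset $|s-t|<N$ shifts the polynomial weight $\alpha$ only by a bounded factor, the two norms are comparable up to a constant depending on $N$, so $a\in W_{\alpha_1,\alpha_2}^{N\times N}$ gives $L(a)\in\mathcal{R}$. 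For the regularity hypothesis I would use the standard fact recalled in the Introduction: since the polynomial weight satisfies the GRS-condition, $W_{\alpha_1,\alpha_2}^{N\times N}$ is inverse-closed, and under $\det a(t)\neq0$ with winding number zero the symbol $a$ lies in the connected component of the identity of the group of invertibles, which is generated by exponentials. Thus $a=e^{b_1}\cdots e^{b_r}$ with $b_j\in W_{\alpha_1,\alpha_2}^{N\times N}$; as unfolding intertwines the algebra homomorphism $L(\cdot)$ with the exponential, this yields $L(a)=e^{A_1}\cdots e^{A_r}$ with $A_j:=L(b_j)\in\mathcal{R}$.

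Next I would evaluate the constants delivered by Theorem~\ref{MainResult-1}. The characters $\tau_n\in\HomXT$ satisfy $\tau_n([\frac jN])=e^{2\pi i nj/N}$, so $\tau_{k_2+mN}([\frac jN])=e^{2\pi i k_2 j/N}e^{2\pi i mj}=e^{2\pi i k_2 j/N}$, i.e.\ $\tau_{k_2+mN}=\tau_{k_2}$ is independent of $m$; likewise $\tau_{h_1(m)}=\tau_{k_1}$ is constant. Hence $h_1$ and $h_2$ are fractal for $\Xi$ with associated $\tau_1=\tau_{k_1}$, $\tau_2=\tau_{k_2}$, and Theorem~\ref{MainResult-1} gives the $m$-independent limit $\Theta_{L(a),1}(\tau_{k_1})\,\Theta_{L(a),2}(\tau_{k_2})$, which I would take as $E_{a,1}[k_1]$ and $E_{a,2}[k_2]$. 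To match $G$: the main diagonal of $A_1+\cdots+A_r$ is the $N$-periodic sequence $s\mapsto\sum_j((b_j)_0)_{s,s}$ with mean $\frac1N\sum_j\trace((b_j)_0)=\frac{1}{2\pi N}\int_0^{2\pi}\sum_j\trace b_j(e^{ix})\,dx$; since $\det a=\exp(\sum_j\trace b_j)$ pointwise, $\sum_j\trace b_j$ is a continuous logarithm of $\det a$, so this mean equals $\frac{1}{2\pi N}\int_0^{2\pi}\log\det a(e^{ix})\,dx$ for that branch and $G=\exp$ of it.

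The main obstacle is the regularity step of the second paragraph: producing the factorization $a=e^{b_1}\cdots e^{b_r}$ \emph{inside} the weighted Wiener algebra, which rests on inverse-closedness (via the GRS-condition) and on identifying the exponential subgroup with the identity component of the invertibles. A minor point to flag is that the branch of $\log\det a$---equivalently the choice of $N$-th root of the classical Szeg\"o constant $\exp(\frac{1}{2\pi}\int_0^{2\pi}\log\det a\,dx)$---is fixed by the chosen factorization and is absorbed into $E_{a,1}[k_1]$ and $E_{a,2}[k_2]$; since $\tau_{k_2+mN}$ and the residue $k_2-k_1+mN\pmod N$ are $m$-independent, only the product $E_{a,1}[k_1]E_{a,2}[k_2]$ together with $G^{k_2-k_1+mN}$ is independent of this choice, consistent with the statement. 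All remaining steps are routine specializations of the general machinery.
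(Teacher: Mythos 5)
Your proof is correct and takes essentially the same route as the paper: both reduce the theorem to the exponential factorization $a=e^{b_1}\cdots e^{b_r}$ inside $W_{\alpha_1,\alpha_2}^{N\times N}$ (the fact recalled in the Introduction, i.e.\ Prop.~6.4 of \cite{Eh-SzW}), transfer it to $L(a)=e^{L(b_1)}\cdots e^{L(b_r)}$ in $\mathcal{W}_{\alpha_1,\alpha_2}(APW(\Z,\gr\{[\frac{1}{N}]\},1))$, and apply the limit theorem with the sequences $h_1(m)=k_1$, $h_2(m)=k_2+mN$ --- you invoke Theorem~\ref{MainResult-1} directly, while the paper invokes Theorem~\ref{conclusion}, which it explicitly notes amounts to the same fractal-version argument. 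Your added verifications (the norm comparison showing $L(a)\in\mathcal{R}$, the computation identifying $G$ with $\exp\bigl(\frac{1}{2\pi N}\int_0^{2\pi}\log\det a(e^{ix})\,dx\bigr)$, and the remark that the branch of $\log\det a$ only shifts $G$ by an $N$-th root of unity absorbed into the constants) are details the paper leaves as ``straightforward.''
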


\begin{proof}
We first have to use a result from \cite{Eh-SzW}, more precisely, the equivalence of the statements (i) and (ii) in 
\cite[Prop.~6.4]{Eh-SzW} in the setting $\mathcal{S}=W_{\alpha_1,\alpha_2}$. Note that statement (ii) 
amounts to the above assumptions on the symbol $a$, whereas statement (i) asserts that we can write
$a=e^{a_1}\cdots e^{a_r}$ for certain $a_1,\dots,a_r\in W_{\alpha_1,\alpha_2}^{N\times N}$. 
Therefore, it follows that 
$$
L(a)=e^{L(a_1)}\cdots e^{L(a_r)}
$$
with $L(a_1),\dots,L(a_r)\in \cR$, $\cR=\mathcal{W}_{\alpha_1,\alpha_2}(APW(\Z,\Xi,\beta))$, $\Xi=\gr\{[\frac{1}{N}]\}$, $\beta\equiv1$.

We can now apply Theorem \ref{conclusion} with $n_1=k_1$, $n_2=k_2+mN$ and $m\to\iy$.
Note that  in the terms of the fractal version (Theorem \ref{MainResult-1}) we would consider the fractal sequences
$h_1(m)=k_1$ and $h_2(m)=k_2+mN$.
The expressions for the constants in \eqref{limit6.8b} are given by
$$
E_{A,j}(n_j)=\widehat{\Theta}_{A,j}(e^{2\pi i n_j\frac{1}{N}})=\widehat{\Theta}_{A,j}(e^{2\pi i k_j\frac{1}{N}})=:E_{a,j}[k_j],\qquad j=1,2.
$$
They do not depend on $m$, but only on $k_1$ and $k_2$. 
The computation of the constant $G$ can be done straightforwardly.
\end{proof}

In the classical case and when $N\ge2$  it is possible that the constant $E[a]$ in \eqref{SW-lim} is zero
(even though this is considered `exceptional'). In our situation the same can happen when $N\ge2$. 
For instance,  in the case of the symbol
$$
a(t)=\left(\begin{array}{cc}t&0\\ 0 & t^{-1}\end{array}\right)
$$
one finds that both constants $E_{a,1}[k_1]$ and $E_{a,2}[k_2]$ are zero for all $k_1,k_2$, while with little effort it can be seen that 
$a$ is a product of exponentials of trigonometric matrix functions. We leave the details to the reader.

%%%%%%%%%%%%%%%%%%%%%%%%%%%%%%%%%%%%%%%%
%%%%%%%%%%%%%%%%%%%%%%%%%%%%%%%%%%%%%%%%
\subsection{The inverse closedness problem}
\label{sec:7.4}

Problem (iii) mentioned at the beginning of this section asks the question how to decide whether
a given operator $A\in\cR\subseteq \cL(l^2(\Z))$ is  the finite product of exponentials of 
operators in $\cR$. It would be desirable to have at least some (non-trivial) sufficient conditions available.
We will make a connection between this problem and the question whether $\cR$ is inverse closed in $\cL(l^2(\Z))$.

Recall that a unital Banach subalgebra $\cR$ of $\cL(l^2(\Z))$ is inverse closed in $\cL(l^2(\Z))$ if
$$
\mathcal{G}(\cR) = \cR\cap \mathcal{G}(\cL(l^2(\Z))).
$$
Here $\mathcal{G}(\mathcal{B})$ stands for the group of invertible elements in a unital Banach algebra $\mathcal{B}$.

The group $\mathcal{G}(\mathcal{R})$ may consist of several connected components. 
 It is well known (see, e.g, \cite[Thm.~10.34]{rudin}) that $A$ is a finite product of exponentials  of elements in $\cR$ 
if and only if $A$ belongs to the connected component of $\mathcal{G}(\mathcal{R})$ containing the identity operator.
With this equivalence, the following proposition is obvious.

\begin{prop}
Let $\cR$ be unital, inverse closed Banach subalgebra of $\mathcal{L}(l^2(\Z))$.
For $A\in\cR$ assume that there exists a continuous function $\sigma:[0,1]\to \cR$ such that 
$$
\sigma(0)=I, \quad \sigma(1)=A,\quad\mbox{and}\quad  \sigma(t) \mbox{ is invertible in }\mathcal{L}(l^2(\Z))\mbox{ for all $t\in[0,1]$}.
$$ 
Then $A=e^{A_1}\cdots e^{A_r}$ for certain $A_1,\dots,A_r\in\cR$.
\end{prop}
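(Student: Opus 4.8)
The plan is to reduce the statement to the cited characterization of the principal component of $\mathcal{G}(\cR)$: namely, that $A$ is a finite product of exponentials of elements of $\cR$ precisely when $A$ lies in the connected component of $\mathcal{G}(\cR)$ containing the identity $I$. Thus it suffices to exhibit a path inside $\mathcal{G}(\cR)$ joining $I$ to $A$, and the given function $\sigma$ is the obvious candidate.

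First I would verify that $\sigma(t)\in\mathcal{G}(\cR)$ for every $t\in[0,1]$. By hypothesis $\sigma(t)\in\cR$ and $\sigma(t)$ is invertible in $\cL(l^2(\Z))$, so $\sigma(t)\in\cR\cap\mathcal{G}(\cL(l^2(\Z)))$. Inverse closedness of $\cR$ gives $\cR\cap\mathcal{G}(\cL(l^2(\Z)))=\mathcal{G}(\cR)$, whence $\sigma(t)\in\mathcal{G}(\cR)$. Consequently $\sigma$ takes values in the open subset $\mathcal{G}(\cR)$ of $\cR$, and since it is continuous as a map into $\cR$, it is continuous as a map into $\mathcal{G}(\cR)$ equipped with the subspace topology.

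Next, the image $\sigma([0,1])$ is the continuous image of the path-connected (hence connected) space $[0,1]$, so it is a connected subset of $\mathcal{G}(\cR)$. Because $\sigma(0)=I$, this image is contained in the connected component of $\mathcal{G}(\cR)$ containing $I$. As $A=\sigma(1)$ belongs to the same image, $A$ lies in this principal component. Applying the quoted result \cite[Thm.~10.34]{rudin} then yields $A=e^{A_1}\cdots e^{A_r}$ for some $A_1,\dots,A_r\in\cR$, as desired.

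The one point that carries all of the content — and where I expect any difficulty to reside — is the passage from ``invertible in $\cL(l^2(\Z))$'' to ``invertible in $\cR$''. This is exactly the step where the inverse closedness hypothesis is indispensable: without it the path $\sigma$ need not remain inside $\mathcal{G}(\cR)$, and the argument would break down since connectivity would be asserted in the wrong group. Everything else is routine: the standard fact that the continuous image of a connected set is connected, together with the identification of the identity component of the invertibles with the set of finite products of exponentials.
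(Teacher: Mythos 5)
Your proof is correct and is exactly the argument the paper has in mind: the paper states the equivalence (via \cite[Thm.~10.34]{rudin}) between being a finite product of exponentials and lying in the identity component of $\mathcal{G}(\cR)$, and then declares the proposition ``obvious,'' the implicit reasoning being precisely your chain (inverse closedness puts $\sigma(t)$ in $\mathcal{G}(\cR)$, the connected image of $[0,1]$ containing $I$ lies in the identity component, hence so does $A$). You have simply written out the details the paper omits, including correctly identifying inverse closedness as the step that keeps the path inside $\mathcal{G}(\cR)$.
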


Under the assumption that $\cR$ is inverse closed, this proposition provides useful sufficient criteria for $A\in \cR$ 
to be a product of exponentials. For instance, a function $\sigma$ with the required properties exists if the unbounded component of the spectrum of $A$ in $\cL(l^2(\Z))$ contains  zero. The latter is the case, for instance,  if $A$ is invertible 
in $\cL(l^2(\Z))$ and is self-adjoint.

\medskip
It would be interesting to know under which conditions $\cR$ has the inverse closedness property.
We raise the following conjecture.

\begin{conj}\label{conject}
Let $\beta$ be an admissible weight on an additive subgroup $\Xi$ of $\R/\Z$ satisfying the GRS-condition, 
and let $\alpha_1,\alpha_2\ge 0$. Then the Banach algebra 
$\cR=\mathcal{W}_{\alpha_1,\alpha_2}(APW(\Z,\Xi,\beta))$ is inverse closed in $\mathcal{L}(l^2(\Z))$.
\end{conj}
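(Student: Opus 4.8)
The plan is to realize $\cR$ as a twisted weighted convolution algebra over a discrete abelian group and then to deduce inverse closedness from a Wiener-type lemma, in which the GRS-condition is the decisive hypothesis. Writing a generic element as $A=\sum_{k\in\Z}\sum_{\xi\in\Xi}a^{(k)}_\xi(e_\xi I)U_k$ and using $U_m(e_\eta I)U_{-m}=e^{-2\pi i m\eta}(e_\eta I)$, one obtains the multiplication rule $(e_\xi I)U_m\cdot(e_\eta I)U_n=e^{-2\pi i m\eta}(e_{\xi+\eta}I)U_{m+n}$. Hence $\cR$ is isometrically isomorphic to the twisted $\ell^1$-algebra $\ell^1_w(G,\sigma)$ on $G=\Z\times\Xi$ with weight $w(k,\xi)=\alpha(k)\beta(\xi)$ and two-cocycle $\sigma\big((m,\xi),(n,\eta)\big)=e^{-2\pi i m\eta}$, the embedding into $\cL(l^2(\Z))$ being the representation $(k,\xi)\mapsto(e_\xi I)U_k$. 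This representation is faithful, since $A$ determines its diagonals $D_k(A)=a^{(k)}$ and each diagonal determines its Fourier coefficients. A short case analysis on the signs of $k$ (using $\alpha_1,\alpha_2\ge0$) shows $\alpha$ is submultiplicative, so $w$ is admissible; and $w$ satisfies the GRS-condition on $G$ because $\alpha(km)^{1/k}=(1+k|m|)^{\alpha_i/k}\to1$, $\beta(k\xi)^{1/k}\to1$, and a product of GRS-weights is a GRS-weight.

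I would then reduce the claim to a spectral invariance (Wiener's lemma) for $\ell^1_w(G,\sigma)$. The transitivity principle is that if $\cR$ is inverse closed in $\mathcal{B}:=\overline{\cR}^{\,\|\cdot\|_{\cL(l^2)}}$ and $\mathcal{B}$ is a unital $C^*$-subalgebra of $\cL(l^2(\Z))$, then $\cR$ is inverse closed in $\cL(l^2(\Z))$, because a unital $C^*$-subalgebra is automatically inverse closed by spectral permanence. In the symmetric situation ($\alpha_1=\alpha_2$ and $\beta(-\xi)\asymp\beta(\xi)$), $\cR$ is a Banach $*$-algebra, $\mathcal{B}$ is a $C^*$-algebra, and the required spectral invariance follows from the symmetry theory of weighted group algebras: for amenable (in particular abelian) $G$ and a GRS-weight, $\ell^1_w(G,\sigma)$ is a symmetric Banach $*$-algebra, hence inverse closed in its enveloping $C^*$-algebra (Hulanicki's lemma together with the symmetry results of Fendler--Gr\"ochenig--Leinert, in the twisted-convolution form due to Gr\"ochenig--Leinert). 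Faithfulness of the representation on $l^2(\Z)$ is then used to transport this spectral invariance to $\mathcal{B}$.

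The essential obstacle, and the reason the statement is only conjectured, is that $w$ is genuinely non-symmetric when $\alpha_1\ne\alpha_2$ (or when $\beta$ is non-symmetric): the adjoint of $(aI)U_k$ involves $U_{-k}$ with weight $\alpha(-k)=(1+|k|)^{\alpha_2}\ne\alpha(k)$, so $\cR$ is not a $*$-algebra and $\overline{\cR}$ need not be a $C^*$-algebra, which disables the clean symmetric machinery. Two routes suggest themselves: (a) prove the symmetric case as above and then bootstrap to the asymmetric polynomial weight $\alpha$ by a differential-subalgebra / Leibniz-commutator argument in the spirit of Gr\"ochenig--Klotz, the point being that the finite-order polynomial part is far more tractable than the GRS-controlled almost-periodic part; or (b) establish a Wiener's lemma for non-symmetric twisted convolution algebras directly. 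A secondary difficulty is that $\Xi$ may fail to be finitely generated (e.g.\ $\Xi=\Q/\Z$), so the symmetry results must be invoked for arbitrary countable abelian $G$, and the passage from the enveloping $C^*$-algebra to the concrete faithful representation on $l^2(\Z)$ must be justified in that generality. I expect the interplay between the non-symmetric polynomial off-diagonal weight and the GRS-controlled almost-periodic entries to be where the real difficulty lies.
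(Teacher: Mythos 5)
First, a point of order: the statement you were asked to prove is Conjecture \ref{conject} of the paper, and the paper contains \emph{no proof of it}. Subsection \ref{sec:7.4} raises it explicitly as an open problem, observes that the GRS-condition is \emph{necessary} for inverse closedness, and states that proving sufficiency ``seems difficult.'' What the paper does offer is exactly the starting point of your plan: for finitely generated $\Xi\cong\Z^n$, the algebra $\cR$ is in certain cases isomorphic as a Banach algebra to the weighted twisted convolution algebras of Gr\"ochenig and Leinert \cite{MR3535305}, with the caveat that ``their algebras are represented on $l^2(\Z^d)$, whereas our $\cR$ is represented on $l^2(\Z)$.'' So your proposal cannot be compared against a proof; it can only be judged as an attempt to settle the conjecture, and as such it is incomplete --- as you yourself concede.

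Judged that way, your identification of $\cR$ with the twisted algebra $\ell^1_w(\Z\times\Xi,\sigma)$ (cocycle $e^{-2\pi i m\eta}$, weight $\alpha(k)\beta(\xi)$) is correct and coincides with the reduction the paper has in mind, but the decisive gap sits in the step you dispatch in one sentence: ``Faithfulness of the representation on $l^2(\Z)$ is then used to transport this spectral invariance to $\mathcal{B}$.'' Faithfulness at the $\ell^1$-level does \emph{not} imply that the extension of the representation to the enveloping $C^*$-algebra $C^*(G,\sigma)$ is injective; if that extension has a kernel, invertibility of $\pi(a)$ in $\mathcal{B}$ (equivalently in $\cL(l^2(\Z))$, by spectral permanence) is strictly weaker than invertibility of $a$ in $C^*(G,\sigma)$, and symmetry of $\ell^1_w(G,\sigma)$ plus Pt\'ak--Hulanicki theory then yields nothing. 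To run Hulanicki's lemma you must prove the norm identity $\|\pi(a)\|_{\cL(l^2(\Z))}=r_{\ell^1_w}(a)$ for self-adjoint $a$ in \emph{this particular} representation, which is not the twisted regular representation of $G$ --- the only one for which amenability hands you that identity. This is precisely the obstruction the paper names as the main difficulty, whereas you demote it to ``a secondary difficulty''; conversely, the weight asymmetry you call ``the essential obstacle'' is a genuine but additional problem, for which your routes (a) and (b) are declared intentions rather than arguments. Two smaller corrections: your claim that $\overline{\cR}$ ``need not be a $C^*$-algebra'' for asymmetric weights is false --- the operator-norm closure of $\cR$ is always the $C^*$-algebra generated by $\{(e_\xi I)U_k\}$, independently of the weight, since finite linear combinations lie in $\cR$ and are dense; the asymmetry only prevents $\cR$ itself from being a $*$-algebra. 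And the transport gap is likely repairable, though by an argument absent from your proposal: for infinite $\Xi$ one can identify this closure with the crossed product $C(\HomXT)\rtimes\Z$ of a free minimal action (the orbit $\{\tau_n\}$ is dense in $\HomXT$ by Pontryagin duality), which is simple, so every nonzero representation is automatically isometric. Even with that repair, the non-symmetric case remains fully open, consistent with the statement being a conjecture.
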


Assume that $\beta$ is an admissible weight on $\Xi$. Then the GRS-condition is necessary for the inverse
closedness of $\cR$ in  $\mathcal{L}(l^2(\Z))$. In fact, it is easy to see (e.g., by using Gelfand theory) that 
the GRS-condition is necessary and sufficient for the inverse closedness of $\cA=APW (\Z,\Xi,\beta)$ in $l^\iy(\Z)$. 
On the other hand, proving that the GRS-condition implies the inverse closedness of $\cR$ in  $\mathcal{L}(l^2(\Z))$ 
seems difficult.

In the case of finitely generated groups $\Xi\cong \Z^n$ the conjecture is motivated by the work of
Gr\"ochenig and Leinert \cite{MR3535305}, where a similar inverse closedness property is proved.
They consider (non-commutative) weighted Wiener-type algebras with twisted convolutions as the product
and symmetric weights on $\Z^d$.
Their algebras are isomorphic as Banach algebras to our algebras $\cR$ with $d=n+1$ in certain cases.
Unfortunately, the difference is that their algebras are represented on $l^2(\Z^d)$, whereas our $\cR$ is represented on $l^2(\Z)$. 

\bigskip
{\bf The almost Mathieu operators revisited.}
Recall that the almost Mathieu operator is given by
\begin{equation}\label{Mathieu-2}
M_a =  U_1 +aI+U_{-1},
\end{equation}
where $a\in AP(\Z)$ is $a(n)=\beta\cos 2\pi(\xi n +\delta)$.
We assume that $\beta$, $\xi$, and $\delta$ are real numbers in which case $M_a$ is a selfadjoint
operator on $l^2(\Z)$. 

Suppose in addition that $\xi$ is not a Liouville number. This means that $\xi$ is either rational or belongs
to the set $\mathcal{S}_1$ defined in Subsection \ref{sec:7.1}. In both cases we can conclude that there exists an
admissible and compatible weight $\beta$ on the group $\Xi_\xi=\gr\{[\xi]\}$ (see Proposition \ref{p7.6}).
In addition, this weight satisfies the GRS-condition.
It is also clear that 
$$
M_a\in\cR:= \mathcal{W}_{1/2,1/2}(APW(\Z,\Xi_\xi,\beta)).
$$

Now assume that the above conjecture is true. Then we can conclude that if $\lambda\in\C$ is such that 
$A=M_a-\lambda I$ is an invertible operator on $l^2(\Z)$, then $A$ is a finite product of exponentials 
of elements in $\cR$. Hence our limit theorems (e.g., Theorem \ref{conclusion}) are applicable
to $A$.

Without having the conjecture available we can still say something, namely, that the limit theorems
apply to $A=M_a-\lambda I$ provided that  $\lambda\in\C$ is such that 
$|\lambda|> \|M_a\|_{\cR}$.
Indeed, in this case $A$ possesses a logarithm in $\cR$. Notice that the norm of $\|M_a\|_{\cR}$ 
depends on $\beta$ and thus on the diophantine properties of $\xi$.

%%%%%%%%%%%%%%%%%%%%%%%%%%%%%%%%%%%%%%%%%%%%%%%%%%%%%%%%%%%%%%%%%%%%%%%%%%%%%%%%%%%%%%%%%%%%%%%%%%%%%%%%%%%%%%%%%%%%%%%%

\iffalse
\bigskip
{\bf Keywords:} Szeg\"o-Widom limit theorem, block Toeplitz operator, almost Mathieu operator, determinants of finite sections, almost periodic sequences, fractal sequences, band dominated operators with almost periodic diagonals

\bigskip
{\bf MSC:}
47B35, \ \ 
Secondary: 
47B36, 47B37, 47L80.
\fi

\end{document}